\thanks[sfn,1]{Université Paris-Sud, Laboratoire de Mathématiques, UMR 8628, 91405 Orsay, France}
\thanks[sfn,2]{INRIA Saclay Ile-de-France, Projet SELECT}
\begin{document}

\RRNo{7356}

\makeRR

\numberwithin{equation}{section} \theoremstyle{plain}
\newtheorem{thm}{Theorem}[section]
\newtheorem{theorem}{Theorem}[section]
\newtheorem{acknowledgement}[theorem]{Acknowledgement}
\newtheorem{algorithm}[theorem]{Algorithm}
\newtheorem{axiom}[theorem]{Axiom}
\newtheorem{case}[theorem]{Case}
\newtheorem{claim}[theorem]{Claim}
\newtheorem{conclusion}[theorem]{Conclusion}
\newtheorem{condition}[theorem]{Condition}
\newtheorem{conjecture}[theorem]{Conjecture}
\newtheorem{corollary}[theorem]{Corollary}
\newtheorem{criterion}[theorem]{Criterion}
\newtheorem{definition}[theorem]{Definition}
\newtheorem{example}[theorem]{Example}
\newtheorem{exercise}[theorem]{Exercise}
\newtheorem{lemma}[theorem]{Lemma}
\newtheorem{notation}[theorem]{Notation}
\newtheorem{problem}[theorem]{Problem}
\newtheorem{proposition}[theorem]{Proposition}
\newtheorem{remark}[theorem]{Remark}
\newtheorem{solution}[theorem]{Solution}
\newtheorem{summary}[theorem]{Summary}

\newcommand{\non}{\nonumber}
\newcommand{\droit}[1]{{\upshape #1}}
\newcommand{\n}{\noindent}
\newcommand{\p}{\lambda}
\newcommand{\argmin}{\mathop{\mathrm{arg\,min}}}
\newcommand{\e}{\xi_{i}}
\newcommand{\lp}{\hat{\theta}}
\newcommand{\lnp}{f_{\hat{\theta}}}
\newcommand{\pp}{p}
\newcommand{\lnpp}{f_{\hat{\theta}_{\pp}}}
\newcommand{\f}{f}
\newcommand{\ft}{h}
\newcommand{\fu}{g}
\newcommand{\pj}{\phi_{j}}
\newcommand{\norm}[1]{\Vert#1\Vert}
\newcommand{\norme}[1]{\Vert#1\Vert}
\newcommand{\deflp}{\lp \in \argmin_{\theta \in \mathbb{R}^{p}}
\norme{Y - X \theta}^2  + \p \norm{\theta}_{1}}
\newcommand{\deflnp}{\lnp \in \argmin_{\theta \in \mathbb{R}^{p}}
\norme{\f - \ft}^{2} + \p \norm{\ft}_{1}}
\newcommand{\lud}{\mathcal{L}_{1}(\mathcal{D})}
\newcommand{\dico}{\{\phi_{1},\dots,\phi_{p}\}}
\newcommand{\fc}{\hat{f}}
\newcommand{\vp}{\phi}
\newcommand{\ftheta}{\phi.\theta}
\newcommand{\lasso}{Lasso}
\newcommand{\tcp}{\hat{\theta}_{\pp}}
\newcommand{\fcp}{\hat{f}_{p}}
\newcommand{\fcpc}{\hat{f}_{\hat{p}}}
\newcommand{\ludp}{\mathcal{L}_{1}(\mathcal{D}_{\pp})}
\newcommand{\fcf}{\fc_{\hat{p}}}
\newcommand{\lambdap}{\lambda_{p}}
\newcommand{\lambdapc}{\lambda_{\hat{p}}}
\newcommand{\ftcpc}{f_{\hat{\theta}_{\hat{\pp}}}}
\newcommand{\normup}[1]{\Vert #1 \Vert_{\mathcal{L}_{1}(\mathcal{D}_{\pp})}}
\newcommand{\normupc}[1]{\Vert #1 \Vert_{\mathcal{L}_{1}(\mathcal{D}_{\hat{\pp}})}}
\newcommand{\crit}[1]{\gamma(#1)}
\newcommand{\pa}{\alpha}
\newcommand{\bq}{\mathcal{B}_{q}}
\newcommand{\fl}{\ell}
\newcommand{\thetal}{\theta^{\fl}}
\newcommand{\wlq}{w\mathcal{L}_{q}}
\newcommand{\flp}{\f_{p}}
\newcommand{\thetalp}{\theta^{p,\beta}}
\newcommand{\cf}{\theta^*}
\newcommand{\cfl}{\thetal}
\newcommand{\cflp}{\theta^L_{\pp}}
\newcommand{\cte}{C}
\newcommand{\bqr}{\mathcal{B}_{q,r}}
\newcommand{\eps}{\varepsilon}
\newcommand{\normu}[1]{\Vert #1\Vert_{\mathcal{L}_{1}(\mathcal{D})}}
\newcommand{\lu}{\mathcal{L}_{1}(\mathcal{D})}
\newcommand{\penp}{\operatorname*{pen}(\pp)}
\newcommand{\pf}{p_{\f}}
\newcommand{\nbqpf}{\norm{\pf}_{\bq}}
\newcommand{\nbqf}{\norm{\f}_{\bq}}
\newcommand{\nwlqpf}{\norm{\pf}_{\wlq}}
\newcommand{\nwlqf}{\norm{\f}_{\wlq}}
\newcommand{\lur}{\mathcal{L}_{1,r}}
\newcommand{\normlur}[1]{\Vert #1 \Vert_{\lur}}
\newcommand{\normbqr}[1]{\Vert #1 \Vert_{\bqr}}
\newcommand{\normbq}[1]{\Vert #1 \Vert_{\bq}}
\newcommand{\wlqc}{w\ell_{q}}
\newcommand{\normwlqc}[1]{\Vert #1 \Vert_{\wlqc}}
\newcommand{\normwlq}[1]{\Vert #1 \Vert_{\wlq}}
\newcommand{\ens}{\Lambda}
\newcommand{\ensn}{\mathbb{N}^{*}}
\newcommand{\lambdapo}{\lambda_{p_{0}}}
\newcommand{\normupo}[1]{\norm{#1}_{\mathcal{L}_{1}(\mathcal{D}_{p_{0}})}}
\newcommand{\ludpo}{\mathcal{L}_{1}(\mathcal{D}_{p_{0}})}
\newcommand{\normude}[1]{\Vert #1\Vert_{\mathcal{L}_{1}(D)}}
\newcommand{\lude}{\mathcal{L}_{1}(D)}
\newcommand{\besov}{\mathcal{B}^{\, r}_{2,\infty}}
\newcommand{\normbesov}[1]{\Vert #1 \Vert_{\besov}}
\newcommand{\ellipsoid}{\besov}
\newcommand{\pb}{\beta}
\newcommand{\fpbp}{\f_{\pp,\pb_{\pp}}}
\newcommand{\fg}{g}
\newcommand{\q}{a}
\newcommand{\w}{b}
\newcommand{\dd}{d}
\newcommand{\logn}{\ln}
\newcommand{\normud}[1]{\Vert #1\Vert_{\mathcal{L}_{1}(\mathcal{D})}}
\newcommand{\rw}{R}
\newcommand{\rb}{R}
\newcommand{\R}{M}
\newcommand{\swlq}{\mathcal{L}_{q}}
\newcommand{\fct}{\tilde{\f}}
\newcommand{\enso}{\Lambda_{0}}
\newcommand{\blambdap}{\beta}
\newcommand{\lassotype}{selected Lasso }
\newcommand{\fcpo}{\hat{\f}_{p_{0}}}

\setcounter{tocdepth}{2}

\tableofcontents

\section{Introduction}

We consider the problem of estimating a regression function $\f$ belonging to a
Hilbert space $\mathbb{H}$ in a fairly general Gaussian framework which
includes the fixed design regression or the white noise frameworks. Given a
dictionary $\mathcal{D} = \left\{\vp_{j}\right\}_{j}$ of functions
in~$\mathbb{H}$, we aim at constructing an estimator $\fc = \hat{\theta}.\vp :=
\sum_{j} {\hat{\theta}}_{j}\, \vp_{j}$ of~$\f$ which enjoys both good
statistical properties and computational performance even for large or infinite
dictionaries.

\medskip
For high-dimensional dictionaries, direct minimization of the empirical risk
can lead to overfitting and we need to add a complexity penalty to avoid it.
One could use an $\ell_{0}$-penalty, i.e.\ penalize the number of non-zero
coefficients $\hat{\theta}_{j}$ of~$\fc$ (see \cite{bm-min} for instance) so as
to produce interpretable sparse models but there is no efficient algorithm to
solve this non-convex minimization problem when the size of the dictionary
becomes too large. On the contrary, $\ell_{1}$-penalization leads to convex
optimization and is thus computationally feasible even for high-dimensional
data. Moreover, due to its geometric properties, $\ell_{1}$-penalty tends to
produce some coefficients that are exactly zero and hence often behaves like an
$\ell_{0}$-penalty. These are the main motivations for introducing
$\ell_{1}$-penalization rather than other penalizations.

\medskip In the linear regression framework, the idea of
$\ell_{1}$-penalization was first introduced by Tibshirani~\cite{tibshirani}
who considered the so-called \lasso\ estimator (Least Absolute Shrinkage and
Selection Operator). Then, lots of studies on this estimator have been carried
out, not only in the linear regression framework but also in the nonparametric
regression setup with quadratic or more gene\-ral loss functions (see
\cite{bic-tsy}, \cite{koltchinskii}, \cite{vdg1} among others). In the
particular case of the fixed design Gaussian regression models, if we observe
$n$ i.i.d.\! random couples $(x_{1},Y_{1}),\dots,(x_{n},Y_{n})$ such that
\begin{equation}\label{modelintro}Y_{i} = \f(x_{i})+\sigma
\xi_{i},\quad i=1,\dots, n,\end{equation} and if we consider a dictionary
$\mathcal{D}_{p} = \{\vp_{1},\dots,\vp_{p}\}$ of size $p$, the \lasso\
estimator is defined as the following $\ell_{1}$-penalized least squares
estimator
\begin{equation}\label{lassointro}\fcp := \fcp(\lambdap) = \argmin_{\ft \in \ludp}
\norm{Y-\ft}^2+\lambdap
\norm{\ft}_{\mathcal{L}_{1}(\mathcal{D}_{p})},\end{equation} where
$\norm{Y-\ft}^2 := \sum_{i=1}^{n} \left(Y_{i}-\ft(x_{i})\right)^2/n$ is the
empirical risk of $\ft$, $\ludp$ is the linear span of $\mathcal{D}_{p}$
equipped with the $\ell_{1}$-norm
$\norm{\ft}_{\mathcal{L}_{1}(\mathcal{D}_{p})}  := \inf\{\norm{\theta}_{1} =
\sum_{j=1}^{p} \vert\theta_{j}\vert :\ \ft = \theta.\vp = \sum_{j=1}^{p}
\theta_{j}\, \vp_{j}\}$ and $\lambdap>0$ is a regularization parameter.

\medskip Since $\ell_{1}$-penalization can be seen as a ``convex relaxation'' of
$\ell_{0}$-penalization, many efforts have been made to prove that the \lasso\
behaves like a variable selection procedure by esta\-blishing sparsity oracle
inequalities showing that the $\ell_{1}$-solution mimicks the
``$\ell_{0}$-oracle'' (see for instance~\cite{bic-tsy} for the prediction loss
in the case of the quadratic nonparametric Gaussian regression model).
Nonetheless, all these results require strong restrictive assumptions on the
geometric structure of the variables. We refer to \cite{vdg2} for a detailed
overview of all these restrictive assumptions.

\bigskip In this paper, we shall explore another approach by analyzing
the performance of the \lasso\ as a regularization algorithm rather than a
variable selection procedure. This shall be done by providing an
$\ell_{1}$-oracle type inequality satisfied by this estimator (see Theorem
\ref{zlasso}). In the particular case of the fixed design Gaussian regression
model, this result says that if $\mathcal{D}_{p}=\{\vp_{1},\dots,\vp_{p}\}$
with $\max_{j=1,\dots,p}\Vert\pj\Vert \leq1$, then there exists an absolute
constant $C>0$ such that for all $\lambdap \geq 4 \sigma\, n^{-1/2} (\sqrt{\ln
p} +1)$, the \lasso\ estimator defined by \eqref{lassointro} satisfies
\begin{equation}\label{thmintro}\mathbb{E}\left[\norm{\f-\fcp}^{2} + \lambdap \normup{\fcp}\right]
\leq C\left[ \inf_{\ft \in \ludp} \left(\norm{\f-\ft}^{2}+\lambdap
\normup{\ft}\right) +\frac{\sigma \lambdap}{\sqrt{n}}\right].\end{equation}
This simply means that, provided that the regularization parameter $\lambdap$
is pro\-perly chosen, the \lasso\ estimator works almost as well as the
deterministic \lasso. Notice that, unlike the sparsity oracle inequalities, the
above result does not require any assumption neither on the target function
$\f$ nor on the structure of the variables $\vp_{j}$ of the
dictionary~$\mathcal{D}_{p}$, except simple normalization that we can always
assume by considering $\vp_{j}/\norm{\vp_{j}}$ instead of~$\vp_{j}$. This
$\ell_{1}$-oracle type inequality is not entirely new. Indeed, on the one hand,
Barron and al.\! \cite{bar-huang} have provided a similar risk bound but in the
case of a truncated \lasso\ estimator under the assumption that the target
function is bounded by a constant. On the other hand, Rigollet and Tsybakov
\cite{rig-tsy} are proposing a result with the same flavour but with the subtle
difference that it is expressed as a probability bound which does not imply
\eqref{thmintro} (see a more detailed explanation in Section
\ref{soussectionlassofini}).

We shall derive \eqref{thmintro} from a fairly general model selection theorem
for non linear models, interpreting $\ell_{1}$-regularization as an
$\ell_{1}$-balls model selection criterion (see Section \ref{mst}). This
approach will allow us to go one step further than the analysis of the \lasso\
estimator for finite dictionaries. Indeed, we can deal with infinite
dictionaries in various situations.

\medskip

In the second part of this paper, we shall thus focus on infinite countable
dictionaries.  The idea is to order the varia\-bles of the infinite dictionary
$\mathcal{D}$ thanks to the a priori knowledge we can have of these variables,
then write the dictionary $\mathcal{D}=\{\vp_{j}\}_{j\in
\mathbb{N}^*}=\{\vp_{1},\vp_{2},\dots\}$ according to this order, and consider
the dyadic sequence of truncated dictionaries $\mathcal{D}_{1} \subset \dots
\subset \mathcal{D}_{\pp}\subset \dots \subset \mathcal{D}$ where
$\mathcal{D}_{\pp} = \{\vp_{1},\dots,\vp_{\pp}\}$ for $p \in \{2^{J}, J \in
\mathbb{N}\}$. Given this sequence $\left(\mathcal{D}_{\pp}\right)_{\pp}$, we
introduce an associated sequence of \lasso\ estimators $(\fcp)_{\pp}$ with
regularization parameters~$\lambdap$ depending on $\pp$, and choose
$\fc_{\hat{p}}$ as an $\ell_{0}$-penalized estimator among this sequence by
penalizing the size of the truncated dictionaries $\mathcal{D}_{p}$.
 This \lassotype estimator $\fc_{\hat{p}}$ is thus based on an algorithm choosing
automatically the best level of truncation of the dictionary and is constructed
to make the best tradeoff between approximation, $\ell_{1}$-regularization and
sparsity. From a theoretical point of view, we shall establish an oracle
inequality satisfied by this \lassotype estimator. Of course, although
introduced for infinite dictionaries, this estimator remains well defined for
finite dictionaries and it may be profitable to exploit its good properties and
to use it rather than the classical \lasso\ for such dictionaries.

\medskip

In a third part of this paper, we shall focus on the rates of convergence of
the sequence of the \lasso s and the \lassotype estimator introduced above. We
shall provide rates of convergence of these estimators for a wide range of
function classes described by mean of interpolation spaces $\bqr$ that are
adapted to the truncation of the dictionary and constitute an extension of the
intersection between weak-$\mathcal{L}_{q}$ spaces and Besov spaces $\besov$
for non orthonormal dictiona\-ries. Our results will prove that the \lasso\
estimators $\fcp$ for $p$ large enough and the \lassotype estimator $\fcf$
perform as well as the greedy algorithms described by Barron and al.\! in
\cite{bcdd}. Besides, our convergence results shall highlight the advantage of
using the \lassotype estimator rather than \lasso s. Indeed, we shall prove
that the \lasso\ estimators $\fcp$, like the greedy algorithms in \cite{bcdd},
are efficient only for $p$ large enough compared to the unknown parameters of
smoothness of $\f$ whereas $\fcf$ always achieves good rates of convergence
whenever the target function $\f$ belongs to some interpolation space $\bqr$.
In particular, we shall check that these rates of convergence are optimal by
establishing a lower bound of the minimax risk over the intersection between
$\swlq$ spaces and Besov spaces $\besov$ in the orthonormal case.

\medskip We shall end this paper by providing some theoretical results on the
performance of the \lasso\ for particular infinite uncountable dictionaries
such as those used for neural networks. Although \lasso\ solutions can not be
computed in practice  for such dictionaries, our purpose is just to point out
the fact that the \lasso\ theoretically performs as well as the greedy
algorithms in \cite{bcdd}, by establi\-shing rates of convergence based on an
$\ell_{1}$-oracle type inequality similar to~\eqref{thmintro} satisfied by the
\lasso\ for such dictionaries.

\bigskip The article is organized as follows. The notations and the genera\-lized linear
Gaussian framework in which we shall work throughout the paper are introduced
in Section 2. In Section 3, we consider the case of finite dictionaries and
analyze the performance of the \lasso\ as a regulari\-zation algorithm by
providing an $\ell_{1}$-oracle type inequality which highlights the fact that
the \lasso\ estimator works almost as well as the deterministic \lasso\
provided that the regularization parameter is large enough. In section 4, we
study the case of infinite counta\-ble dictionaries and establish a similar
oracle inequality for the \lassotype estimator $\fcf$. In section 5, we derive
from these oracle inequalities rates of convergence of the \lasso s and the
\lassotype estimator for a variety of function classes. Some theoretical
results on the performance of the \lasso\ for the infinite uncountable
dictionaries used to study neural networks in the artificial intelligence field
are mentioned in Section 6. Finally, Section 7 is devoted to the explanation of
the key idea that enables us to derive all our oracle inequalities from a
single general model selection theorem and to the statement of this general
theorem. The proofs are postponed until Section 8.

%%%%%%%%%%%%%%%%%%%%%%%%%%%%%%%%%%%%%%%%%%%%%%%%%%%%%%%%%%%%%%%%%%%%%%%%%%%%%%%%%%%%%%%
%%%%%%%%%%%%%%%%%%%%%%%%%%%%%%%%%%%%%%%%%%%%%%%%%%%%%%%%%%%%%%%%%%%%%%%%%%%%%%%%%%%%%%%
\section{Models and notations} \label{section1}

%%%%%%%%%%%%%%%%%%%%%%%%%%%%%%%%%%%%%%%%%%%%%%%%%%%%%%%%%%%%%%%%%%%%%%%%%%%%%%%%%%%%%%
\subsection{General framework and statistical problem}

Let us first describe the generalized linear Gaussian model we shall work with.
We consider a separable Hilbert space $\mathbb{H}$ equipped with a scalar
product $\langle ., .\rangle$ and its associated norm $\norm{.}$.

\begin{definition}\label{defw}\droit{\textbf{[Isonormal Gaussian process]}} A
centered Gaussian process $(W(\ft))_{\ft \in \mathbb{H}}$ is isonormal if its
covariance is given by $\mathbb{E}[W(\fu)W(\ft)] = \langle\fu,\ft\rangle$ for
all $\fu, \ft \in \mathbb{H}$.\end{definition} The statistical problem we
consider is to approximate an unknown target function $\f$ in $\mathbb{H}$ when
observing a process $\left(Y(\ft)\right)_{\ft \in \mathbb{H}}$ defined by
\begin{equation}\label{statpbg} Y (\ft) = \langle \f, \ft \rangle +
\varepsilon W(\ft), \quad \ft \in \mathbb{H},\end{equation} where
$\varepsilon
> 0$ is a fixed parameter and $W$ is an isonormal process.
This framework is convenient to cover both finite-dimensional models and the
infinite-dimensional white noise model as described in the following examples.

\begin{example}\label{excglrm}\droit{\textbf{[Fixed design Gaussian regression
model]} Let $\mathcal{X}$ be a measurable space. One observes $n$ i.i.d.\!
random couples $(x_{1},Y_{1}),\dots,(x_{n},Y_{n})$ of $\mathcal{X} \times
\mathbb{R}$ such that}
\begin{equation}\label{modelesection1}Y_{i} = \f(x_{i}) +
\sigma \e, \quad i=1,\dots,n,\end{equation} \droit{where  the covariates
$x_{1},\dots, x_{n}$ are deterministic elements of $\mathcal{X}$,  the errors
$\e$ are
 i.i.d. $\mathcal{N}\left(0, 1\right)$, $\sigma>0$  and $\f
: \mathcal{X} \mapsto \mathbb{R}$ is the unknown regression function to be
estimated. If one considers $\mathbb{H} = \mathbb{R}^n$ equipped with the
scalar product $\langle u,v\rangle = \sum_{i=1}^n u_{i}\, v_{i}/n$, defines $y
= (Y_{1},\dots,Y_{n})^{T}$, $\xi = (\xi_{1},\dots,\xi_{n})^{T}$ and
improper\-ly denotes $h=(h(x_{1}),\dots,h(x_{n}))$ for every $h : \mathcal{X}
\mapsto \mathbb{R}$, then $W(\ft) := \sqrt{n}\, \langle \xi, \ft \rangle$
defines  an isonormal Gaussian process on $\mathbb{H}$ and $Y (\ft) := \langle
y,\ft\rangle$ satisfies \eqref{statpbg} with $\varepsilon := \sigma/\sqrt{n}$.}

\n \droit{Let us notice that}\begin{equation}\label{notnorme}\norme{h} :=
\sqrt{\frac{1}{n} \sum_{i=1}^{n} h^{2}(x_{i})}\end{equation} \droit{corresponds
to the $\mathbb{L}_{2}$-norm with respect to the measure $\nu_{x} :=
\sum_{i=1}^{n} \delta_{x_{i}}/n$ with $\delta_{u}$ the Dirac measure at $u$. It
depends on the sample size $n$ and on the training sample via
$x_{1},\dots,x_{n}$ but we omit this dependence in notation \eqref{notnorme}.}
\end{example}

\begin{example}\label{whitenoise}\droit{\textbf{[The white noise framework]}
In this case, one observes $\zeta(x)$ for $x\in[0,1]$ given by the stochastic
differential equation}
\[
d\zeta(x)=\f(x)\, dx+\varepsilon\, dB(x)  \text{ \droit{with} }\zeta(0)
=0\text{,}%
\]
\droit{where $B$ is a standard Brownian motion, $\f$ is a square-integrable
function and $\eps > 0$. If we define $W(\ft) =\int_{0}^{1}\ft(x)\, dB(x)$ for
every $\ft\in\mathbb{L}_{2}([0,1]),$ then $W$ is an isonormal process
 on $\mathbb{H} = \mathbb{L}_{2}([0,1])$, and $Y(\ft) =\int_{0}^{1}\ft(x)\,
d\zeta(x)$ obeys to (\ref{statpbg}) provided that $\mathbb{H}$ is equipped with
its usual scalar product $\left\langle \f,\ft\right\rangle =\int_{0}^{1}\f(x)
\ft(x)\, dx$. Typically, $\f$ is a signal and $d\zeta(x)$ represents the noisy
signal received at time $x$. This framework easily extends to a $d$-dimensional
setting if one considers some multivariate Brownian sheet $B$ on $[0,1]^{d}$
and takes $\mathbb{H}=\mathbb{L}_{2}\left([0,1]^{d}\right)$.}\end{example}

%%%%%%%%%%%%%%%%%%%%%%%%%%%%%%%%%%%%%%%%%%%%%%%%%%%%%%%%%%%%%%%%%%%%%%%%%%%%%%%%%%%
\subsection{Penalized least squares estimators}

To solve the general statistical problem \eqref{statpbg}, one can consider a
dictionary $\mathcal{D}$, i.e.\! a given finite or infinite set of functions
$\vp_{j} \in \mathbb{H}$ that arise as candidate basis functions for estimating
the target function $\f$, and construct an estimator $\fc = \hat{\theta}.\vp :
= \sum_{j,\, \vp_{j} \in \mathcal{D}}\, \hat{\theta}_{j}\, \vp_{j}$ in the
linear span of $\mathcal{D}$. All the matter is to choose a ``good'' linear
combination in the following meaning. It makes sense to aim at constructing an
estimator as the best approximating point of $\f$ by minimizing
$\Vert\f-\ft\Vert$ or, equivalently, $-2\langle \f, \ft \rangle +
\Vert\ft\Vert^2$. However $\f$ is unknown, so one may instead minimize the
empirical least squares criterion
\begin{equation}\label{defgammah}\gamma(\ft) := -2 Y(\ft) + \norm{\ft}^2.\end{equation}
But since we are mainly interested in very large dictionaries, direct
minimization of the empirical least squares criterion can lead to overfitting.
To avoid it, one can rather consider a penalized risk minimization problem and
consider \begin{equation}\label{plse}\fc \in \argmin_{\ft} \gamma(\ft) +
\operatorname*{pen}(\ft),\end{equation} where $\operatorname*{pen}(\ft)$ is a
positive penalty to be chosen. Finally, since the resulting estimator $\fc$
depends on the observations, its quality can be measured by its quadratic risk
$\mathbb{E}[\Vert\f - \fc\Vert^2]$.

\medskip The penalty $\operatorname*{pen}\left(\ft\right)$
can be chosen according to the statistical target. In the recent years, the
situation where the number of variables $\vp_{j}$ can be very large (as
compared to $\varepsilon^{-2}$) has received the attention of many authors due
to the increa\-sing number of applications for which this can occur.
Micro-array data analysis or signal reconstruction from a dictionary of
redundant wavelet functions are typical examples for which the number of
variables either provided by Nature or considered by the statistician is large.
Then, an interesting target is to select the set of the ``most significant''
variables $\vp_{j}$ among the initial collection. In this case, a convenient
choice for the penalty is the $\ell_{0}$-penalty that penalizes the number of
non-zero coefficients $\hat{\theta}_{j}$ of $\fc$, thus providing sparse
estimators and interpretable models. Nonetheless, except when the functions
$\vp_{j}$ are orthonormal, there is no efficient algorithm to solve this
minimization problem in practice when the dictionary becomes too large. On the
contrary, $\ell_{1}$-penalization, that is to say $\operatorname*{pen}(\ft)
\propto \normud{\ft} := \inf \left\{\norm{\theta}_{1} = \sum_{j,\, \vp_{j} \in
\mathcal{D}} \vert\theta_{j}\vert\ \  \text{ such that } \ft =
\theta.\vp\right\}$, leads to convex optimization and is thus computationally
feasible even for high-dimensional data. Moreover, due to its geometric
properties, $\ell_{1}$-penalty tends to produce some coefficients that are
exactly zero and thus often behaves like an $\ell_{0}$-penalty, hence the
popularity of $\ell_{1}$-penalization and its associated estimator the \lasso\
defined by
$$\fc(\p)  = \argmin_{\ft \in \lud}\ \crit{\ft} + \p \normu{\ft},\quad \p
>0,$$ where $\lud$ denotes the set of functions $\ft$  in the linear span of $\mathcal{D}$ with finite $\ell_{1}$-norm
$\normud{\ft}$.

%%%%%%%%%%%%%%%%%%%%%%%%%%%%%%%%%%%%%%%%%%%%%%%%%%%%%%%%%%%%%%%%%%%%%%%%%%%%%%%%%%%%%%%%
%%%%%%%%%%%%%%%%%%%%%%%%%%%%%%%%%%%%%%%%%%%%%%%%%%%%%%%%%%%%%%%%%%%%%%%%%%%%%%%%%%%%%%%%%
\section{The \lasso\ for finite dictionaries}\label{section2}

While many efforts have been made to prove that the \lasso\ behaves like a
varia\-ble selection procedure at the price of strong (though unavoidable)
assumptions on the geometric structure of the dictionary (see \cite{bic-tsy} or
\cite{vdg2} for instance), much less attention has been paid to the analysis of
the performance of the Lasso as a regularization algorithm. The analysis
 we propose below goes in this very direction. In this section,
we shall consider a finite dictionary $\mathcal{D}_{p}$ of size $p$ and provide
an $\ell_{1}$-oracle type inequality bounding the quadratic risk
 of the \lasso\ estimator  by the
infimum over $\ludp$ of the tradeoff between the approximation term
$\norm{\f-\ft}^2$ and the $\ell_{1}$-norm $\normup{\ft}$.

%%%%%%%%%%%%%%%%%%%%%%%%%%%%%%%%%%%%%%%%%%%%%%%%%%%%%%%%%%%%%%%%%%%%%%%%%%%%%%%%%%%%%%
\subsection{Definition of the \lasso\ estimator}\label{sectiondeflassocasfini}

We consider the generalized linear Gaussian model and the statistical
pro\-blem~\eqref{statpbg} introduced in the last section.  Throughout this
section, we assume that $\mathcal{D}_{p} = \{\vp_{1},\dots,\vp_{p}\}$ is a
finite dictionary of size $p$. In this case, any $\ft$ in the linear span of
$\mathcal{D}_{p}$ has finite $\ell_{1}$-norm
\begin{equation}\label{defnormemodelpu}\normup{\ft} := \inf
\left\{\norm{\theta}_{1} = \sum_{j=1}^{\pp} \vert\theta_{j}\vert\ ,\ \theta \in
\mathbb{R}^{\pp} \text{ such that } \ft = \theta.\vp\right\}\end{equation} and
thus belongs to $\ludp$. We propose to estimate $\f$ by a penalized least
squares estimator as introduced at \eqref{plse} with a penalty
$\operatorname*{pen}(\ft)$ proportional to $\normup{h}$. This estimator is the
so-called \lasso\ estimator $\fcp$ defined by
\begin{equation}\label{deflassosection3}\fcp := \fcp(\lambdap)  = \argmin_{\ft \in \ludp}\ \crit{\ft} + \lambdap
\normup{\ft},\end{equation} where $\lambdap>0$ is a regularization parameter
and $\gamma(h)$ is defined by \eqref{defgammah}.

\begin{remark}\droit{Let us notice that the general definition \eqref{deflassosection3}
coincides with the usual definition of the \lasso\ in the particular case of
the classical fixed design Gaussian regression model presented in Example
\ref{excglrm},}
$$Y_{i} = \f(x_{i}) + \sigma \e, \quad i=1,\dots,n.$$
\droit{Indeed, if we define $y=\left(Y_{1},\dots,Y_{n}\right)^T$, we have}
$$\gamma(\ft) = -2 Y(\ft) + \norm{\ft}^2 = -2 \langle y,\ft\rangle +
\norm{\ft}^2 = \norm{y-h}^2 - \norm{y}^2,$$ \droit{so we deduce from
\eqref{deflassosection3} that the \lasso\ satisfies}
\begin{equation}\label{egalite} \fcp =
\argmin_{\ft \in \ludp} \left(\norme{y - \ft}^{2} + \lambdap
\normup{\ft}\right).\end{equation} \droit{Let us now consider for all $\ft \in
\ludp$, $\Theta_{\ft} :=\{\theta = (\theta_{1},\dots,\theta_{p}) \in
\mathbb{R}^{p},\ \ft = \theta.\vp = \sum_{j=1}^{p} \theta_{j}\, \vp_{j}\}$.
Then, we get from \eqref{defnormemodelpu} that}
\begin{align*}  \inf_{\ft \in \ludp} \left(\norme{y - \ft}^{2} + \lambdap
\normup{\ft}\right) & = \inf_{\ft \in \ludp} \left(\norme{y - \ft}^{2}
+ \lambdap \inf_{\theta \in \Theta_{\ft}} \norm{\theta}_{1}\right)\\ & =
\inf_{\ft \in \ludp} \inf_{\theta \in \Theta_{\ft}} \left(\norme{y -
\ft}^{2} + \lambdap
\norm{\theta}_{1}\right)\\
& = \inf_{\ft \in \ludp} \inf_{\theta \in \Theta_{\ft}} \left(\norme{y
- \theta.\vp}^{2} + \lambdap \norm{\theta}_{1}\right)\\ & = \inf_{\theta
\in \mathbb{R}^p} \left(\norme{y - \theta.\vp}^{2} + \lambdap
\norm{\theta}_{1}\right).
\end{align*} \droit{Therefore, we get from \eqref{egalite} that $\fcp =
\hat{\theta}_{p}.\vp$  where $\hat{\theta}_{p} = \argmin_{\theta \in
\mathbb{R}^p} \norme{y - \theta.\vp}^{2} + \lambdap \norm{\theta}_{1},$ which
corresponds to the usual definition of the \lasso\ estimator for the fixed
design Gaussian regression models with finite dictionaries of size $p$ (see
\cite{bic-tsy} for instance).}\end{remark}

%%%%%%%%%%%%%%%%%%%%%%%%%%%%%%%%%%%%%%%%%%%%%%%%%%%%%%%%%%%%%%%%%%%%%%%%%%%%%%%%%%%%%%%%%%%%%
\subsection{The $\ell_{1}$-oracle inequality}\label{soussectionlassofini}

Let us now state the main result of this section.
\begin{theorem}
\label{zlasso} Assume that $\max_{j=1,\dots,p}\Vert\pj\Vert \leq 1$
 and that
\begin{equation}\label{inegp} \lambdap \geq 4 \eps \left(\sqrt{\logn p} +
1\right).\end{equation} Consider the corresponding Lasso estimator $\fcp$
defined by \eqref{deflassosection3}.

\n Then, there exists an absolute positive constant $C$ such that, for all $z
>0$,  with probability larger than
$1-3.4\, \text{\upshape{e}}^{-z}$,
\begin{equation}\label{inegaliteenproba}\norm{\f-\fcp}^{2} + \lambdap \normup{\fcp} \leq C
\left[ \inf_{\ft\in \ludp}\left(
\norm{\f-\ft}^{2}+\lambdap \normup{\ft}\right)
+\lambdap\, \varepsilon(1+z)\right].\end{equation}
Integrating \eqref{inegaliteenproba} with respect to $z$ leads to the following
$\ell_{1}$-oracle type inequality in expectation,
\begin{equation}\label{inegaliteoraclecasfini}\mathbb{E}\left[\norm{\f-\fcp}^{2} +
\lambdap
\normup{\fcp}\right] \leq  C \left[ \inf_{\ft \in \ludp} \left(
\norm{\f-\ft}^{2}+\lambdap \normup{\ft}\right) +\lambdap
\eps\right].\end{equation}
\end{theorem}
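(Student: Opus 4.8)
The plan is to obtain \eqref{inegaliteenproba} as an instance of the general model selection theorem of Section~7, by reading the $\ell_{1}$-penalty as a penalty indexed by a collection of $\ell_{1}$-balls. The first move is a purely deterministic reduction. Since $\crit{\ft}=-2Y(\ft)+\norm{\ft}^{2}$ and $Y(\ft)=\langle\f,\ft\rangle+\eps\,W(\ft)$, one has for every $\ft$
\[
\crit{\ft}+\norm{\f}^{2}=\norm{\f-\ft}^{2}-2\eps\,W(\ft).
\]
Feeding $\fcp$ and an arbitrary competitor $\ft\in\ludp$ into the minimising property $\crit{\fcp}+\lambdap\normup{\fcp}\leq\crit{\ft}+\lambdap\normup{\ft}$ and adding $\norm{\f}^{2}$ to both sides yields
\[
\norm{\f-\fcp}^{2}+\lambdap\normup{\fcp}\leq\norm{\f-\ft}^{2}+\lambdap\normup{\ft}+2\eps\,W(\fcp-\ft),
\]
so that everything reduces to a uniform control of the Gaussian linear functional $2\eps\,W(\fcp-\ft)$.

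To cast this as model selection, I would take as models the $\ell_{1}$-balls $S_{R}=\{\ft\in\ludp:\normup{\ft}\leq R\}$ for $R$ on a grid $R_{m}=m\eps$, $m\geq1$, and attach to $S_{R_{m}}$ a weight $\Delta_{m}$ proportional to $m$, so that $\sum_{m}\mathrm{e}^{-\Delta_{m}}$ is a finite absolute constant (tuned so as to produce the constant $3.4$). The Lasso is then exactly the estimator minimising $\crit{\cdot}$ over the ball it selects, with a penalty $\lambdap R$ that is \emph{linear} in the radius; matching this linear penalty is what the whole construction hinges on. The crux is the Gaussian complexity of an $\ell_{1}$-ball: since $W$ is linear and the supremum of a linear form over $S_{R}$ is reached at a vertex $\pm R\pj$,
\[
\mathbb{E}\Big[\sup_{\ft\in S_{R}}W(\ft)\Big]=R\,\mathbb{E}\Big[\max_{1\leq j\leq p}|W(\pj)|\Big]\leq R\sqrt{2\ln(2p)},
\]
the last bound using $\norm{\pj}\leq1$ and the Gaussian maximal inequality. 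This complexity is again linear in $R$, with slope of order $\sqrt{\ln p}$, and moreover $\sup_{\ft\in S_{R}}\norm{\ft}\leq R$, which will feed the concentration (deviation) part of the theorem.

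It then remains to verify the penalty condition: the penalty required for $S_{R_{m}}$ is of order $\eps\sqrt{\ln p}\,R_{m}+\eps^{2}\Delta_{m}$, and the hypothesis $\lambdap\geq4\eps(\sqrt{\ln p}+1)$ is precisely what guarantees that $\lambdap R_{m}$ dominates it, the factor $4$ and the $+1$ leaving room for the weights. Applying the theorem gives, with probability at least $1-3.4\,\mathrm{e}^{-z}$, a bound of the form $\norm{\f-\fcp}^{2}+\lambdap\normup{\fcp}\leq C\inf_{m}(\inf_{\ft\in S_{R_{m}}}\norm{\f-\ft}^{2}+\lambdap R_{m})+C\lambdap\eps(1+z)$. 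To recover the continuous infimum in \eqref{inegaliteenproba}, I would, for a given $\ft$, pick the grid point $R_{m}$ just above $\normup{\ft}$, which costs at most $\lambdap\eps$ in penalty (hence the $\eps$-spacing of the grid), so the discrete infimum is at most $C\inf_{\ft\in\ludp}(\norm{\f-\ft}^{2}+\lambdap\normup{\ft})+C\lambdap\eps$. Finally, \eqref{inegaliteoraclecasfini} follows by integrating the tail over $z>0$: writing $V=\norm{\f-\fcp}^{2}+\lambdap\normup{\fcp}$ and $A=\inf_{\ft\in\ludp}(\norm{\f-\ft}^{2}+\lambdap\normup{\ft})$, the bound reads $\mathbb{P}(V-CA-C\lambdap\eps>C\lambdap\eps\,z)\leq3.4\,\mathrm{e}^{-z}$, whence $\mathbb{E}[(V-CA-C\lambdap\eps)_{+}]\leq C'\lambdap\eps$ and the expectation inequality follows.

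The main obstacle is the complexity estimate together with the verification of the penalty condition at the sharp threshold $4\eps(\sqrt{\ln p}+1)$: one must control the supremum of the isonormal process over the intersection of an $\ell_{1}$-ball and an $\ell_{2}$-ball sharply enough that a penalty \emph{exactly} proportional to $\normup{\cdot}$ qualifies, while simultaneously keeping the discretisation of the radii and the choice of weights $\Delta_{m}$ compatible with both the remainder $\lambdap\eps(1+z)$ and the numerical constant $3.4$.
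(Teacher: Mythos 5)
Your proposal follows the paper's own proof essentially step for step: the Lasso is cast as a penalized least squares procedure over the countable collection of $\ell_{1}$-balls $S_{m}=\{\ft:\normup{\ft}\leq m\varepsilon\}$, the complexity bound $\mathbb{E}\bigl[\sup_{\ft\in S_{m}}W(\ft)\bigr]\leq m\varepsilon\sqrt{2\ln(2p)}$ together with weights proportional to $m$ verifies the penalty condition of Theorem \ref{zmaingaussnl} exactly under $\lambdap\geq 4\varepsilon(\sqrt{\ln p}+1)$, and the discrete infimum over radii is converted to the continuous one at cost $\lambdap\varepsilon$, with the expectation bound obtained by integrating the tail in $z$, all as in Section \ref{preuvethm}. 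The only slight imprecision is your claim that the Lasso is \emph{exactly} the penalized minimizer over the ball it selects: over the discretized radii it is only a $\rho$-approximate penalized estimator with $\rho=\lambdap\varepsilon$ (this is the content of \eqref{minip}), but your remainder term $\lambdap\varepsilon(1+z)$ absorbs precisely this, just as in the paper.
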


This $\ell_{1}$-oracle type inequality highlights the fact that  the \lasso\
(i.e.\!\! the ``noisy'' \lasso) behaves almost as well as the deterministic
\lasso\ provided that the regularization parameter $\lambdap$ is properly
chosen. The proof of Theorem \ref{zlasso} is detailed in Section \ref{proofs}
and we refer the reader to Section \ref{mst} for the description of the key
observation that has enabled us to establish it. In a nutshell, the basic idea
is to view the \lasso\ as the solution of a penalized least squares model
selection procedure over a countable collection of models consisting of
$\ell_{1}$-balls. Inequalities \eqref{inegaliteenproba} and
\eqref{inegaliteoraclecasfini} are thus deduced from a general model selection
theorem borrowed from \cite{blm} and presented in Section \ref{mst} as Theorem
\ref{zmaingaussnl}.

\begin{remark}\droit{\ }

\begin{enumerate}
\droit{\item Notice that unlike the sparsity oracle inequalities with
$\ell_{0}$-penalty esta\-blished by many authors (\cite{bic-tsy},
\cite{vdg1}, \cite{koltchinskii} among others), the above result does not
require any assumption neither on the target function $\f$ nor on the
structure of the variables $\vp_{j}$ of the dictionary $\mathcal{D}_{p}$,
except simple norma\-lization that we can always assume by considerating
$\vp_{j}/\norm{\vp_{j}}$ instead of~$\vp_{j}$.} \droit{\item Although such
$\ell_{1}$-oracle type inequalities have already been studied by a few
authors, no such general risk bound has yet been put forward. Indeed,
Barron and al.\! \cite{bar-huang} have provided a risk bound like
\eqref{inegaliteoraclecasfini} but they restrict to the case of a truncated
\lasso\ estimator under the assumption that the target function is bounded
by a constant. For their part, Rigollet and Tsybakov \cite{rig-tsy} are
proposing an oracle inequality for the \lasso\, similar to
\eqref{inegaliteenproba} which is valid under the same assumption as the
one of Theorem~\ref{zlasso}, i.e.\! simple normalization of the variables
of the dictionary, but their bound in probability can not be integrated to
get an bound in expectation as the one we propose at
\eqref{inegaliteoraclecasfini}. Indeed, first notice that the constant
measuring the level of confidence of their risk bound appears inside the
infimum term as a multiplicative factor of the $\ell_{1}$-norm whereas the
constant $z$ measu\-ring the level of confidence of our risk bound
\eqref{inegaliteenproba} appears as an additive constant outside the
infimum term so that the bound in probability \eqref{inegaliteenproba} can
easily be integrated with respect to $z$, which leads to the bound in
expectation \eqref{inegaliteoraclecasfini}. Besides, the main drawback of
the result given by Tsybakov and Rigollet is that the lower bound of the
regularization parameter~$\lambdap$ they propose (i.e.\! $\lambdap \geq
\sqrt{8(1+z/ \ln p)}\, \eps \sqrt{\ln  p}$) depends on the level of
condidence $z$, with the consequence that their choice of the \lasso\
estimator $\fcp = \fcp(\lambdap)$ also depends on this level of confidence.
On the contrary, our lower bound $\lambdap \geq 4\eps(\sqrt{\ln p}+1)$ does
not depend on $z$ so that we are able to get the result
\eqref{inegaliteenproba} satisfied with high probability by an estimator
$\fcp=\fcp(\lambdap)$ independent of the level of confidence of this
probability.} \droit{\item Theorem \ref{zlasso} is interesting from the
point of view of approximation theory. Indeed, as we shall see in
Proposition \ref{propnvestlasso}, it shows that the \lasso\ performs as
well as the greedy algorithms studied in \cite{bcdd} and \cite{bar-huang}.}
\droit{\item We can check that the upper bound
\eqref{inegaliteoraclecasfini}  is sharp. Indeed, assume that $p\geq 2$,
that $\f \in \ludp$ with $\normup{\f} \leq R$ and that $R\geq \eps$.
Consider the \lasso\ estimator $\fcp$ for $\lambdap = 4 \eps (\sqrt{\ln
p}+1)$. Then, by bounding the infimum term in the right-hand side of
\eqref{inegaliteoraclecasfini} by the value at $h=\f$, we get that
\begin{equation}\label{sal}\mathbb{E}\left[\norm{\f-\fcp}^{2}\right] \leq C \lambdap
\left(\normup{\f}+\eps\right) \leq 8 C R \eps \left(\sqrt{\ln
p}+1\right),\end{equation} where $C>0$.} \droit{Now, it is established in
Proposition 5 in \cite{BM99g} that there exists $\kappa>0$ such that the
minimax risk over the $\ell_{1}$-balls $S_{R,p} = \{h\in \ludp,
\normup{h}\leq R\}$ satisfies
\begin{equation}\label{tous}\inf_{\tilde{\ft}} \sup_{\ft \in S_{R,p}}
\mathbb{E}\left[\norm{\ft-\tilde{\ft}}^{2}\right] \geq \kappa \inf\left(R
\eps \sqrt{1+\ln\left(p \eps R^{-1}\right)},\, p \eps^{2},\,
R^{2}\right),\end{equation}}\droit{where the infimum is taken over all
possible estimators $\tilde{\ft}$. Comparing the upper bound \eqref{sal} to
the lower bound \eqref{tous}, we see that the ratio between them is bounded
independently of $\eps$ for all $S_{R,p}$ such that the signal to noise
ratio $R\eps^{-1}$ is between $\sqrt{\ln p}$ and $p$. This proves that the
\lasso\ estimator $\fcp$ is approximately minimax over such sets
$S_{R,p}$.}
\end{enumerate}
\end{remark}

%%%%%%%%%%%%%%%%%%%%%%%%%%%%%%%%%%%%%%%%%%%%%%%%%%%%%%%%%%%%%%%%%%%%%%%%%%%%%%%%%%%%%%
%%%%%%%%%%%%%%%%%%%%%%%%%%%%%%%%%%%%%%%%%%%%%%%%%%%%%%%%%%%%%%%%%%%%%%%%%%%%%%%%%%%%%
\section{A \lassotype estimator for infinite coun\-table dictionaries}\label{section3}

In many applications such as micro-array data analysis or signal
reconstruction, we are now faced with situations in which the number of
variables of the dictionary is always increasing and can even be infinite.
Consequently, it is desirable to find competitive estimators for such infinite
dimensional problems. Unfortunately, the \lasso\ is not well adapted to
infinite dictionaries. Indeed, from a practical point of view, there is no
algorithm to approximate the \lasso\ solution over an infinite dictionary
because it is not possible to evaluate the infimum of
$\gamma(h)+\p\normud{\ft}$ over the whole set~$\lud$ for an infinite dictionary
$\mathcal{D}$, but only over a finite subset of it. Moreover, from a
theoretical point of view, it is difficult to prove good results on the \lasso\
for infinite dictionaries, except in rare situations when the variables have a
specific structure (see Section \ref{section5} on neural networks).

\medskip In order to deal with an infinite countable dictionary $\mathcal{D}$,
one may order the variables  of the dictionary, write the dictionary
$\mathcal{D}=\{\vp_{j}\}_{j\in \mathbb{N}^*}=\{\vp_{1},\vp_{2},\dots\}$
according to this order, then truncate~$\mathcal{D}$ at a given level $p$ to
get a finite subdictionary $\{\vp_{1},\dots,\vp_{p}\}$ and finally estimate the
target function by the \lasso\ estimator $\fcp$ over this subdictionary. This
procedure implies two difficulties. First, one has to put an order on the
variables of the dictionary, and then all the matter is to decide at which
level one should truncate the dictionary to make the best tradeoff between
approximation and complexity. Here, our purpose is to resolve this last dilemma
by proposing a \lassotype estimator based on an algorithm choosing
automatically the best level of truncation of the dictionary once the variables
have been ordered. Of course, the algorithm and thus the estimation of the
target function will depend on which order the variables have been classified
beforehand. Notice that the classification of the variables can reveal to be
more or less difficult according to the problem under consideration.
Nonetheless, there are a few applications where there may be an obvious order
for the variables, for instance in the case of dictionaries of wavelets.

\medskip  In this section, we shall first introduce the \lassotype estimator that we propose
to approxi\-mate the target function in the case of infinite countable
dictionaries. Then, we shall provide an oracle inequality satisfied by this
estimator. This inequality is to be compared to Theorem~\ref{zlasso}
established for the \lasso\ in the case of finite dictionaries. Its proof is
again an application of the general model selection Theorem \ref{zmaingaussnl}.
Finally, we make a few comments on the possible  advantage of using this
\lassotype estimator for finite dictionaries in place of the classical \lasso\
estimator.

%%%%%%%%%%%%%%%%%%%%%%%%%%%%%%%%%%%%%%%%%%%%%%%%%%%%%%%%%%%%%%%%%%%%%%%%%%%%%
\subsection{Definition of the \lassotype estimator}\label{soussectionlassotype}

We still consider the generalized linear Gaussian model and the statistical
pro\-blem~\eqref{statpbg} introduced in Section \ref{section1}. We recall that,
to solve this problem, we use a dictionary $\mathcal{D}=\{\vp_{j}\}_{j}$ and
seek for an estimator $\fc = \hat{\theta}.\vp = \sum_{j,\, \vp_{j} \in
\mathcal{D}} \hat{\theta_{j}}\, \vp_{j}$ solution of the penalized risk
minimization problem,
\begin{equation}\label{sour}\fc \in \argmin_{\ft \in \lud}
\gamma(\ft) + \operatorname*{pen}(\ft),\end{equation} where
$\operatorname*{pen}(\ft)$ is a suitable positive penalty. Here, we assume that
the dictionary is infinite countable and that it is ordered,
$$\mathcal{D}=\{\vp_{j}\}_{j\in \mathbb{N}^*}=\{\vp_{1},\vp_{2},\dots\}.$$ Given this order, we
can consider the sequence of truncated dictionaries
$\left(\mathcal{D}_{p}\right)_{p \in \ensn}$ where
\begin{equation}\label{defdicotronque}\mathcal{D}_{\pp} := \{\vp_{1},\dots,\vp_{\pp}\}\end{equation} corresponds to
the subdictionary of $\mathcal{D}$ truncated at level $p$, and the associated
sequence of \lasso\ estimators $(\fcp)_{\pp \in \ensn}$ defined in Section
\ref{sectiondeflassocasfini},
\begin{equation}\label{lassoveu}\fcp := \fcp(\lambdap)  = \argmin_{\ft \in
\ludp}\ \crit{\ft} + \lambdap \normup{\ft},\end{equation} where
$\left(\lambdap\right)_{\pp \in \ensn}$ is a sequence of regularization
parameters whose values will be specified below.  Now, we shall choose a final
estimator as an $\ell_{0}$-penalized estimator among a subsequence of the
\lasso\ estimators $(\fcp)_{\pp \in \ensn}$. Let us denote by $\ens$ the set of
dyadic integers,
\begin{equation}\label{dyadicinteger}\ens = \{2^J, J \in \mathbb{N}\},\end{equation} and define
\begin{align}\label{ire2} \fcf
 & =  \argmin_{\pp \in \ens}\left[ \crit{\fcp} +
\lambdap \normup{\fcp} + \penp\right]
\\ \label{ire} & =  \argmin_{\pp \in \ens} \left[\argmin_{\ft \in
\ludp}\left( \crit{\ft} + \lambdap \normup{\ft}\right)+\penp\right],\end{align}
where $\penp$ penalizes the size $p$ of the truncated dictionary
$\mathcal{D}_{p}$ for all $p \in \ens$. From \eqref{ire} and the fact that
$\lud = \cup_{\pp\in \ens} \ludp$, we see that this \lassotype estimator $\fcf$
is a pena\-lized least squares estimator solution of \eqref{sour} where, for
any $p \in \ens$ and $h\in \ludp$, $\operatorname*{pen}(\ft) = \lambdap
\normup{\ft}+\penp$ is a combination of both $\ell_{1}$-regularization and
$\ell_{0}$-penalization. We see from \eqref{ire2} that the algorithm
automatically chooses the rank $\hat{\pp}$ so that $\fcf$ makes the best
tradeoff between approximation, $\ell_{1}$-regularization and sparsity.

\begin{remark}\droit{Notice that from
a theoretical point of view, one could have defined $\fcf$ as an
$\ell_{0}$-penalized estimator among  the whole sequence of \lasso\ estimators
$(\fcp)_{\pp \in \ensn}$ (or more generally among any subsequence of
$(\fcp)_{\pp \in \ensn}$) instead of $(\fcp)_{\pp \in \ens}$.  Nonetheless, to
compute $\fcf$ efficiently, it is interesting to limit the number of
computations of the sequence of \lasso\ estimators $\fcp$  especially if we
choose an $\ell_{0}$-penalty $\penp$ that does not grow too fast with~$p$,
typically $\penp \propto \logn p$, which will be the case in the next theorem.
That is why we have chosen to consider a dyadic truncation of the dictionary
$\mathcal{D}$.}
\end{remark}

%%%%%%%%%%%%%%%%%%%%%%%%%%%%%%%%%%%%%%%%%%%%%%%%%%%%%%%%%%%%%%%%%%%%%%%%%%%%%%%%%%%%%%%%%%
\subsection{An oracle inequality for the \lassotype estimator}

By applying the same general model selection theorem (Theorem
\ref{zmaingaussnl}) as for the establishment of Theorem \ref{zlasso}, we can
provide a risk bound satisfied by the estimator $\fcf$ with properly chosen
penalties $\lambdap$ and $\penp$ for all $\pp \in \ens$. The sequence of
$\ell_{1}$-regularization parameters $\left(\lambdap\right)_{\pp\in \ens}$ is
simply chosen from the lower bound given by \eqref{inegp}  while a convenient
choice for the $\ell_{0}$-penalty will be $\penp \propto \logn \pp$.
\begin{theorem}
\bigskip\label{zlassocasinfini} Assume that $\sup_{j \in \mathbb{N}^*}\Vert\pj\Vert
\leq1$. Set for all $p \in \ens$,
\begin{equation}\label{condparinf}\lambdap = 4 \eps \left(\sqrt{\logn \pp}
+1\right)\,\text{,}\quad \quad \penp = 5 \eps^2 \logn \pp
\text{,}\end{equation} and consider the corresponding \lassotype estimator $\fcf$ defined by
\eqref{ire}.

\n Then, there exists an absolute constant $C>0$ such that
\begin{align}\non \label{inegaliteoraclecasinfini}
 & \mathbb{E}\left[\norm{\f -\fcf}^{2} + \lambda_{\hat{p}}
\norm{\fcf}_{\mathcal{L}_{1}(\mathcal{D}_{\hat{p}})} +
\operatorname*{pen}(\hat{\pp})\right]\\  \leq {} & C \left[\inf_{\pp
\in \ens} \left( \inf_{\ft\in \ludp} \left(
\norm{\f-\ft}^{2}+\lambdap \normup{\ft}\right) +
\penp\right)+\eps^2\right].
\end{align}
\end{theorem}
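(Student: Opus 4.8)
The plan is to realize the \lassotype estimator $\fcf$ as a penalized least squares estimator over a single countable collection of nonlinear models and then to invoke the general model selection Theorem \ref{zmaingaussnl}, exactly as in the proof of Theorem \ref{zlasso} but over a larger collection that now also encodes the choice of the truncation level $p$. Following the key idea of Section \ref{mst}, I would read $\ell_{1}$-regularization as model selection among $\ell_{1}$-balls: for each $p \in \ens$ and each radius $R$ taken on a geometric grid, set $S_{p,R} = \{h \in \ludp : \normup{h} \leq R\}$. Since $\lud = \cup_{p \in \ens} \ludp = \cup_{p,R} S_{p,R}$ and, by \eqref{ire}, $\fcf$ minimizes $\crit{h} + \lambdap \normup{h} + \penp$, I would first check that $\fcf$ coincides, up to the discretization of $R$, with the penalized least squares estimator over the collection $(S_{p,R})_{p,R}$ equipped with the penalty $\operatorname{pen}(p,R) \approx \lambdap R + \penp$.

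The technical heart is the metric entropy of the $\ell_{1}$-balls. By Maurey's empirical method, any $h \in S_{p,R}$ is approximated within $\delta$ in $\norm{\cdot}$ by a combination of $N \approx (R/\delta)^{2}$ atoms $\pj$, each of norm at most $1$ by the normalization $\sup_{j}\norm{\pj} \leq 1$, which yields a covering whose logarithm is of order $(R/\delta)^{2}\logn p$. I would feed this entropy bound into Theorem \ref{zmaingaussnl} to control the fluctuations of the isonormal process $W$ over each $S_{p,R}$. The balance between this entropy and the Gaussian fluctuation is exactly what produces the threshold $\lambdap \propto \eps\sqrt{\logn p}$, so that the continuous penalty $\lambdap \normup{h}$ dominates, across the whole family of radii at a fixed level $p$, the model selection penalty required by the theorem.

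It then remains to assign weights $x_{p,R}$ to the models so that the Kraft-type condition $\sum_{p \in \ens}\sum_{R} e^{-x_{p,R}} \leq \Sigma < \infty$ holds while the induced penalty stays below $\lambdap \normup{h} + \penp$. For the radius index a geometric grid contributes a summable series; for the truncation index the choice $x_{p} \propto \logn p$ gives $\sum_{p \in \ens} e^{-x_{p}} = \sum_{J} 2^{-cJ} < \infty$ thanks to the dyadic structure of $\ens$, and this is precisely what is reflected by the choice $\penp = 5\eps^{2}\logn p$. With both calibrations in place, Theorem \ref{zmaingaussnl} yields, for the selected model $(\hat{p},\hat{R})$,
\[
\mathbb{E}\left[\norm{\f-\fcf}^{2} + \lambdapc \normupc{\fcf} + \operatorname*{pen}(\hat{p})\right] \leq C\left[\inf_{p,R}\left(\inf_{g \in S_{p,R}}\norm{\f-g}^{2} + \lambdap R + \penp\right) + \eps^{2}\Sigma\right].
\]

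Finally, I would undo the discretization: for each fixed $p$, minimizing over the radius grid recovers $\inf_{h \in \ludp}(\norm{\f-h}^{2} + \lambdap \normup{h})$ up to a multiplicative constant, since a geometric grid loses only a constant factor; the term $\penp$ is carried through unchanged, and $\eps^{2}\Sigma$ is absorbed into the additive $\eps^{2}$. Taking the infimum over $p \in \ens$ then gives \eqref{inegaliteoraclecasinfini}. The main obstacle is the double calibration above: showing that a single $\ell_{1}$-penalty $\lambdap \normup{h}$ simultaneously serves as the model selection penalty for every ball $S_{p,R}$, which is where the entropy estimate and the value $\lambdap = 4\eps(\sqrt{\logn p}+1)$ must be matched, while the extra cost of selecting the level $p$ is kept no larger than $\penp \propto \logn p$ and the sum $\sum_{p,R} e^{-x_{p,R}}$ stays finite, a requirement that the dyadic choice of $\ens$ is designed to meet.
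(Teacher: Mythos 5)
Your overall architecture is the same as the paper's: view $\fcf$ as an (approximate) penalized least squares estimator over a doubly indexed collection of $\ell_{1}$-balls $S_{p,R}\subset\ludp$, attach weights containing a $\beta \logn p$ component whose summability over the dyadic set $\ens$ is exactly what $\penp=5\eps^{2}\logn p$ pays for, apply Theorem \ref{zmaingaussnl}, and undo the radius discretization at the end (the paper uses the arithmetic grid $R=m\eps$, $m\in\mathbb{N}^{*}$, rather than your geometric grid; that difference is immaterial). But the step you call the technical heart is a genuine gap. Theorem \ref{zmaingaussnl} requires an upper bound on $\Delta_{p,R}=\mathbb{E}\left[\sup_{\ft\in S_{p,R}}W(\ft)\right]$, and the only standard way to convert your Maurey covering bound $\logn N(\delta)\lesssim (R/\delta)^{2}\logn p$ into such a bound is Dudley's entropy integral, which here diverges at small scales as written, and even after patching with the volumetric bound below $\delta\approx R/\sqrt{p}$ yields only $\Delta_{p,R}\lesssim R\left(\logn p\right)^{3/2}$: the $\ell_{1}$-ball is the textbook example where Dudley chaining is lossy by a $\logn p$ factor. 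Condition \eqref{epengaussnl} would then force $\lambdap\gtrsim\eps\left(\logn p\right)^{3/2}$, and you could not match the stated calibration $\lambdap=4\eps\left(\sqrt{\logn p}+1\right)$. No entropy argument is needed for a countable truncated dictionary: by linearity of $W$, $\sup_{\ft\in S_{p,R}}W(\ft)\leq R\max_{j\leq p}\vert W(\vp_{j})\vert$, a maximum of $2p$ centered Gaussians of variance at most $1$, whence $\Delta_{p,R}\leq R\sqrt{2\logn(2p)}$ directly; this is what the paper does, reserving Dudley's criterion for the uncountable neural-network dictionary of Theorem \ref{zlassorn}, where it is applied to the dictionary itself, not to the ball.

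A second, smaller gap concerns the slack created by discretizing the radius: at the selected model it equals (up to constants) $\lambdapc\,\eps=4\eps^{2}\left(\sqrt{\logn\hat{p}}+1\right)$, which depends on the \emph{selected} index $\hat{p}$, whereas Theorem \ref{zmaingaussnl} only tolerates slacks $\rho_{m}$ attached to the \emph{comparison} model. Your sketch treats this as harmless grid rounding, but it must be absorbed before the theorem can be invoked: the paper splits the $\ell_{0}$-penalty as $\alpha\operatorname*{pen}(p)+(1-\alpha)\operatorname*{pen}(p)$ with $\alpha=1-4/(5\sqrt{\logn 2})\in\,]0,1[$, uses $\sqrt{\logn p}\leq(\logn p)/\sqrt{\logn 2}$ for dyadic $p$ to dominate $\lambdapc\,\eps$ by a fraction of $\operatorname*{pen}(\hat{p})$ plus $4\eps^{2}$, and only then applies the theorem with penalty $\lambdap m\eps+\alpha\operatorname*{pen}(p)$ and $\rho_{p}=(1-\alpha)\operatorname*{pen}(p)+4\eps^{2}$. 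Without this (or an equivalent device) the selected-model slack is uncontrolled. Relatedly, the presence of $\lambdapc\normupc{\fcf}$ on the left of \eqref{inegaliteoraclecasinfini} is obtained in the paper by choosing $\hat{m}$ so that $\hat{m}\eps\geq\normupc{\fcf}$ and dividing by $\alpha$; your final display asserts this term but your outline does not produce it.
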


\medskip
\begin{remark}\droit{Our primary motivation for introducing the \lassotype estimator described above
was to construct an estimator adapted from the \lasso\ and fitted to solve
problems of estimation dealing with infinite dictionaries. Nonetheless, we can
notice that such a \lassotype estimator remains well-defined and can also be
interesting for estimation in the case of finite dictionaries. Indeed, let
$\mathcal{D}_{p_{0}}$ be a given finite dictionary of size $p_{0}$. Assume for
simplicity that $\mathcal{D}_{p_{0}}$ is of cardinal an integer power of two:
$p_{0} = 2^{J_{0}}$. Instead of working with the \lasso\ estimator defined by}
$$\fcpo = \argmin_{\ft \in \ludpo} \crit{\ft}+\lambdapo \normupo{\ft},$$
\droit{with $\lambdapo = 4\eps\left(\sqrt{\logn p_{0}}+1\right)$ being chosen
from the lower bound of Theorem \ref{zlasso}, one can introduce a sequence of
dyadic truncated dictionaries $ \mathcal{D}_{1} \subset \dots \subset
\mathcal{D}_{\pp}\subset \dots \subset \mathcal{D}_{{p}_{0}},$ and consider the
associated \lassotype estimator defined by}
$$\fcf
  =  \argmin_{\pp \in \enso} \left[\argmin_{\ft \in
\ludp}\left( \crit{\ft} + \lambdap \normup{\ft}\right)+\penp\right],$$
\droit{where $\enso = \{2^J, J=0,\dots, J_{0}\}$ and where the sequences
$\lambdap = 4\eps\left(\sqrt{\logn \pp}+1\right)$ and $\penp = 5\eps^2 \logn
\pp$ are chosen from Theorem \ref{zlassocasinfini}. The estimator $\fcf$ can be
seen as an $\ell_{0}$-penalized estimator among the sequence of \lasso\
estimators $(\fcp)_{\pp \in \enso}$ associated to the truncated dictionaries
$\left(\mathcal{D}_{p}\right)_{\pp \in \enso}$,}
$$\fcp  = \argmin_{\ft \in \ludp}\ \crit{\ft} +
\lambdap \normup{\ft}.$$ \droit{In particular, notice that the \lassotype
estimator $\fcf$ and the \lasso\ estimator~$\fcpo$ coincide when $\hat{p} =
p_{0}$ and that in any case the definition of $\fcf$ guarantees that $\fcf$
makes a better tradeoff between approximation, $\ell_{1}$-regularization and
sparsity than $\fcpo$. Furthermore, the risk bound
\eqref{inegaliteoraclecasinfini} remains satisfied by $\fcf$ for a finite
dictionary $\mathcal{D}_{p_{0}}$ if we replace $\mathcal{D}$ by
$\mathcal{D}_{p_{0}}$ and $\ens$ by $\enso$.}
\end{remark}

%%%%%%%%%%%%%%%%%%%%%%%%%%%%%%%%%%%%%%%%%%%%%%%%%%%%%%%%%%%%%%%%%%%%%%%%%%%%%%%%%%%%%%%%%%%
%%%%%%%%%%%%%%%%%%%%%%%%%%%%%%%%%%%%%%%%%%%%%%%%%%%%%%%%%%%%%%%%%%%%%%%%%%%%%%%%%%%%%%%%%%%%
\section{Rates of convergence of the \lasso\ and \lassotype estimators}\label{section4}

In this section, our purpose is to provide rates of convergence of the \lasso\
and the \lassotype estimators introduced in Section \ref{section2} and Section
\ref{section3}. Since in learning theory one has no or not much a priori
knowledge of the smoothness of the unknown target function $\f$ in the Hilbert
space~$\mathbb{H}$, it is essential to aim at establishing performance bounds
for a wide range of function classes. Here, we shall analyze rates of
convergence whenever $\f$ belongs to some real interpolation space between a
subset of $\lud$ and the Hilbert space $\mathbb{H}$. This will provide a full
range of  rates of convergence related to the unknown smoothness of $\f$. In
particular, we shall prove that both the \lasso\ and the \lassotype estimators
perform as well as the greedy algorithms presented by Barron and al.\!
in~\cite{bcdd}. Furthermore, we shall check that the \lassotype estimator is
simultaneously approximately minimax when the dictionary is an orthonormal
basis of $\mathbb{H}$ for a suitable signal to noise ratio.

\medskip Throughout the section, we keep the same framework as in Section
\ref{soussectionlassotype}. In particular,  $\mathcal{D} = \{\vp_{j}\}_{j\in
\mathbb{N}^*}$ shall be a given infinite countable ordered dictionary. We
consider the sequence of truncated dictionaries
$\left(\mathcal{D}_{\pp}\right)_{\pp \in \ensn}$ defined
by~\eqref{defdicotronque}, the associated sequence of \lasso\ estimators
$(\fcp)_{\pp \in \ensn}$ defined by \eqref{lassoveu} and the \lassotype
estimator $\fcf$ defined by \eqref{ire} with $\lambdap = 4\eps(\sqrt{\ln p}+1)$
and $\penp = 5 \eps^2 \logn p$  and where $\ens$ still denotes the set of
dyadic integers defined by~\eqref{dyadicinteger}.

\medskip The rates of convergence for the sequence of the \lasso\ and the
\lassotype estimators will be derived from the oracle inequalities established
in Theorem~\ref{zlasso} and Theorem \ref{zlassocasinfini} respectively. We know
from Theorem \ref{zlasso} that, for all $p\in \ensn$, the quadratic risk of the
\lasso\ estimator $\fcp$ is bounded by
\begin{equation}\label{majriskl}\mathbb{E}\left[\norm{\f-\fcp}^{2}\right] \leq C\left[ \inf_{\ft
\in \ludp} \left( \norm{\f-\ft}^{2}+\lambdap \normup{\ft}\right) +\lambdap
\eps\right],\end{equation} where $C$ is an absolute positive constant, while we
know from Theorem \ref{zlassocasinfini} that the quadratic risk of the
\lassotype estimator $\fcf$ is bounded by
\begin{equation}\label{majrisklt}\mathbb{E}\left[\norm{\f -\fcf}^{2} \right] \leq C
\left[\inf_{p \in \ens} \left( \inf_{\ft\in \ludp} \left(
\norm{\f-\ft}^{2}+\lambdap \normup{\ft}\right) +
\operatorname*{pen}(\pp)\right)+\eps^2\right],\end{equation} where $C$ is an
absolute positive constant. Thus, to bound the quadratic risks of the
estimators $\fcf$ and $\fcp$ for all $p \in\ensn$, we can first focus on
bounding for all $\pp \in \ensn$, \begin{equation}\label{justelasdet} \inf_{\ft
\in \ludp}\left(\norm{\f-\ft}^2+\lambdap \normup{\ft}\right) =
\norm{\f-\flp}^{2}+\lambdap \normup{\flp},\end{equation} where we denote by
$\flp$ the deterministic \lasso\ for the truncated dictionary
$\mathcal{D}_{\pp}$ defined by
\begin{equation}\label{deflassodeterministic}\flp = \argmin_{\ft \in \ludp} \left(
\norm{\f-\ft}^{2}+\lambdap \normup{\ft}\right).\end{equation} This first step
will be handled in Section \ref{interspaces} by considering suitable
interpolation spaces. Then in Section \ref{rateest}, we shall pass on the rates
of convergence of the deterministic \lasso s to the \lasso\ and the \lassotype
estimators thanks to the upper bounds \eqref{majriskl} and \eqref{majrisklt}.
By looking at these upper bounds, we can expect the \lassotype estimator to
achieve much better rates of convergence than the \lasso\ estimators. Indeed,
for a fixed value of $p \in \ensn$, we can see that the risk of the \lasso\
estimator $\fcp$ is roughly of the same order as the rate of convergence of the
corresponding deterministic \lasso\ $\flp$, whereas the risk of $\fcf$ is
bounded by the infimum over \emph{all} $p \in \ens$ of penalized rates of
convergence of the deterministic \lasso s $\flp$.

\subsection{Interpolation spaces}\label{interspaces}

Remember that we are first looking for an upper bound of $\inf_{\ft \in
\ludp}\left(\norm{\f-\ft}^2+\right.$ $\left.\lambdap\normup{\ft}\right)$ for
all $p \in \ensn$. In fact, this quantity is linked to another one in the
approximation theory, which is the so-called $K_{\mathcal{D}_{p}}$-functional
defined below. This link is specified in the following essential lemma.

\begin{lemma}\label{propgenerale} Let $D$ be some finite or infinite
dictionary. For any $\p \geq 0$ and $\delta >0$, consider $$L_{D}(\f,\p) :=
\inf_{\ft \in \mathcal{L}_{1}(D)} \left(\norm{\f-\ft}^2+\p
\norm{\ft}_{\mathcal{L}_{1}(D)}\right)$$ and the $K_{D}$-functional defined by
\begin{equation}\label{defkdfunct}K_{D}(\f,\delta) := \inf_{\ft \in \mathcal{L}_{1}(D)}
\left(\norm{\f-\ft}+\delta\norm{\ft}_{\mathcal{L}_{1}(D)}\right).\end{equation}
 Then,
\begin{equation}\label{eq1}
\frac{1}{2}\, \inf_{\delta
>0}\left(K_{D}^2(\f,\delta)+\frac{\p^2}{2\delta^2}\right) \leq
 L_{D}(\f,\p) \leq \inf_{\delta
>0}\left(K_{D}^2(\f,\delta)+\frac{\p^2}{4\delta^2}\right).\end{equation}
\end{lemma}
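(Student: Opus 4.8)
The plan is to reduce both inequalities to elementary pointwise estimates on the integrand, followed by interchanging the two infima. For $\ft \in \lude$ write $a = \norm{\f-\ft}$ and $b = \normude{\ft}$, so that $L_{D}(\f,\p) = \inf_{\ft}(a^2 + \p b)$ and, since squaring is monotone on $[0,\infty)$ and $a+\delta b \geq 0$, $K_{D}^2(\f,\delta) = \inf_{\ft}(a + \delta b)^2$. Everything will then follow from comparing $a^2 + \p b$ with $(a+\delta b)^2$ for a well-chosen $\delta$.

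For the upper bound, I would fix $\ft$ and $\delta > 0$ and observe, by the arithmetic--geometric mean inequality, that $\delta^2 b^2 + \p^2/(4\delta^2) \geq \p b$. Since also $2\delta a b \geq 0$, expanding $(a+\delta b)^2$ gives the pointwise bound $a^2 + \p b \leq (a + \delta b)^2 + \p^2/(4\delta^2)$, valid for every $\ft$. Taking the infimum over $\ft \in \lude$ (the additive term $\p^2/(4\delta^2)$ being independent of $\ft$) yields $L_{D}(\f,\p) \leq K_{D}^2(\f,\delta) + \p^2/(4\delta^2)$, and then taking the infimum over $\delta > 0$ produces the right-hand inequality of \eqref{eq1}.

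For the lower bound, I would argue in the reverse direction, starting from a near-minimizer $\ft$ of $L_{D}(\f,\p)$, with associated $a,b$. Using $K_{D}(\f,\delta) \leq a + \delta b$ together with the crude bound $(a+\delta b)^2 \leq 2a^2 + 2\delta^2 b^2$, I obtain $K_{D}^2(\f,\delta) + \p^2/(2\delta^2) \leq 2a^2 + 2\delta^2 b^2 + \p^2/(2\delta^2)$. The key step is to balance the last two terms by choosing $\delta^2 = \p/(2b)$ (when $b>0$), which makes both equal to $\p b$ and hence gives $K_{D}^2(\f,\delta) + \p^2/(2\delta^2) \leq 2(a^2 + \p b)$. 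Passing to the infimum over $\delta$ on the left-hand side, then letting $\ft$ run through a minimizing sequence for $L_{D}(\f,\p)$, establishes $\tfrac12 \inf_{\delta>0}\bigl(K_{D}^2(\f,\delta) + \p^2/(2\delta^2)\bigr) \leq L_{D}(\f,\p)$.

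The only delicate points, both minor, are the degenerate case $b = 0$ in the lower bound --- where $\ft = 0$, $L_{D}(\f,\p) = \norm{\f}^2$, and one simply lets $\delta \to \infty$ --- and the fact that $L_{D}(\f,\p)$ need not be attained, which is handled by working with a minimizing sequence and choosing $\delta$ adapted to each term. The genuinely substantive ingredient is the balancing choice of $\delta$ in the lower bound; everything else is a direct consequence of the two scalar inequalities above.
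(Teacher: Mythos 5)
Your proof is correct and takes essentially the same route as the paper's: both reduce to the scalar bounds $a^2+\lambda b \leq (a+\delta b)^2+\lambda^2/(4\delta^2)$ (via the arithmetic--geometric mean inequality, which the paper phrases as the infimum over $\delta$ being attained at $\delta^2=\lambda/(2b)$) and $(a+\delta b)^2 \leq 2a^2+2\delta^2 b^2$ with the same balancing choice $\delta^2=\lambda/(2b)$, and both work with a near-minimizer of $L_{D}(\f,\p)$ since that infimum need not be attained. Your explicit handling of the degenerate case $b=0$ by letting $\delta\to\infty$ is a small point the paper leaves implicit, but otherwise the two arguments coincide.
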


\medskip Let us now introduce a whole range of interpolation spaces $\bqr$ that
are intermediate spaces between subsets of $\lud$ and the Hilbert space
$\mathbb{H}$ on which the $K_{\mathcal{D}_{p}}$-functionals (and thus the rates
of convergence of the deterministic \lasso s $\flp$) are controlled for all $p
\in \ensn$.
\begin{definition}\label{defbqr}\droit{\textbf{[Spaces $\lur$ and $\bqr$]}}
Let $R>0$, $r>0$, $1<q<2$ and $\pa = 1/q-1/2$.

\n We say that a function $g$ belongs to the space $\lur$ if there exists $C>0$
such that for all $\pp \in \ensn$, there exists $g_{\pp} \in \ludp$ such that
$$\normup{g_{\pp}} \leq C$$
and \begin{equation}\label{lus}\norm{g-g_{\pp}} \leq C\, \vert
\mathcal{D}_{p}\vert^{-r} = C \pp^{-r}.\end{equation}
 The smallest $C$ such that this holds defines a
norm $\normlur{g}$ on the space $\lur$.

\smallskip \n We say that $g$ belongs to $\bqr(R)$  if, for all $\delta >0$,
\begin{equation}\label{kflur} \inf_{\ft \in \lur}
\left(\norm{g-\ft}+\delta\normlur{\ft}\right) \leq R\,
\delta^{2\pa}.\end{equation} We say that $g \in \bqr$ if there exists $R
>0$ such that $g \in \bqr(R)$. In this case, the smallest $R$ such that $g \in \bqr(R)$
defines a norm on the space $\bqr$ and is denoted by $\normbqr{g}$.
\end{definition}

\begin{remark}\droit{Note that the spaces $\lur$ and $\bqr$ depend on
the choice of the whole dictionary $\mathcal{D}$ as well as on the way it is
ordered, but we shall omit this dependence so as to lighten the notations. The
set of spaces $\lur$ can be seen as substitutes for the whole space $\lu$ that
are adapted to the truncation of the dictionary. In particular, the spaces
$\lur$ are smaller than the space $\lu$ and the smaller the value of $r>0$, the
smaller the distinction between them. In fact, looking at \eqref{lus}, we can
see that working with the spaces $\lur$ rather than $\lu$ will enable us to
have a certain amount of control (measured by the parameter $r$) as regards
what happens beyond the levels of truncation.}
\end{remark}

\medskip
Thanks to the property of the interpolation spaces $\bqr$ and to
 the equivalence established in Lemma~\ref{propgenerale}
between the rates of convergence of the deterministic \lasso s and the
$K_{\mathcal{D}_{p}}$-functional, we are now able to provide the following
upper bound of the rates of convergence of the deterministic \lasso s when the
target function belongs to some interpolation space $\bqr$.
\begin{lemma}\label{majinf} Let $1<q<2$, $r>0$ and $R>0$. Assume that $\f \in \bqr(R)$.

\n Then, there exists $C_{q}>0$ depending only on $q$ such that, for all $\pp
\in \ensn$, \begin{equation}\label{colless}\inf_{\ft\in \ludp} \left(
\norm{\f-\ft}^{2}+\lambdap \normup{\ft}\right)  \leq C_{q} \max\left(R^{q}
\lambdap^{2-q}\, , \, \left(R p^{-r}\right)^{\frac{2q}{2-q}}
\lambdap^{\frac{4(1-q)}{2-q}}\right).\end{equation}
\end{lemma}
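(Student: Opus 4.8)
The plan is to reduce the left–hand side to a $K$-functional and then carry out a one–dimensional optimization. By Lemma~\ref{propgenerale} applied to $D=\mathcal{D}_{\pp}$ and $\p=\lambdap$,
\[
\inf_{\ft\in \ludp}\left(\norm{\f-\ft}^2+\lambdap\normup{\ft}\right)
=L_{\mathcal{D}_{\pp}}(\f,\lambdap)
\leq \inf_{\delta>0}\left(K_{\mathcal{D}_{\pp}}^2(\f,\delta)+\frac{\lambdap^2}{4\delta^2}\right),
\]
so it suffices to control the $K$-functional $K_{\mathcal{D}_{\pp}}(\f,\delta)=\inf_{\ft\in\ludp}(\norm{\f-\ft}+\delta\normup{\ft})$ and then minimize over $\delta$.

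The conceptual heart is a transfer of smoothness from $\lur$ to the truncated dictionary, using $\f\in\bqr(R)$. Fix $\delta>0$ and set $\delta'=\delta+\pp^{-r}$. By the definition of $\bqr(R)$ (applied with $\delta'$ in place of $\delta$) there is $g\in\lur$ with $\norm{\f-g}+\delta'\normlur{g}\leq R(\delta')^{2\pa}$ (up to arbitrarily small additive slack, as the infimum need not be attained). By the definition of $\lur$ there is $g_{\pp}\in\ludp$ with $\normup{g_{\pp}}\leq\normlur{g}$ and $\norm{g-g_{\pp}}\leq\normlur{g}\,\pp^{-r}$. Taking $g_{\pp}$ as a competitor and using the triangle inequality,
\[
K_{\mathcal{D}_{\pp}}(\f,\delta)\leq \norm{\f-g_{\pp}}+\delta\normup{g_{\pp}}
\leq \norm{\f-g}+(\pp^{-r}+\delta)\normlur{g}
=\norm{\f-g}+\delta'\normlur{g}\leq R(\delta')^{2\pa},
\]
that is, $K_{\mathcal{D}_{\pp}}(\f,\delta)\leq R(\delta+\pp^{-r})^{2\pa}$.

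Plugging this into the first display gives $L_{\mathcal{D}_{\pp}}(\f,\lambdap)\leq \inf_{\delta>0}\bigl(R^2(\delta+\pp^{-r})^{4\pa}+\lambdap^2/(4\delta^2)\bigr)$, and it remains to perform this minimization. Recalling $\pa=1/q-1/2$, so that $4\pa=2(2-q)/q$, I would split according to whether the optimal $\delta$ can be taken above or below $\pp^{-r}$, which amounts to comparing $\lambdap$ with $R\,\pp^{-2r/q}$. When $\lambdap\gtrsim R\,\pp^{-2r/q}$ the optimizer dominates $\pp^{-r}$, so $(\delta+\pp^{-r})^{4\pa}\leq(2\delta)^{4\pa}$ and balancing $R^2\delta^{4\pa}$ against $\lambdap^2\delta^{-2}$ at $\delta\asymp(\lambdap/R)^{q/2}$ produces the first term $R^q\lambdap^{2-q}$. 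When $\lambdap\lesssim R\,\pp^{-2r/q}$ the factor $(\delta+\pp^{-r})^{4\pa}$ is governed by $\pp^{-r}$; evaluating at $\delta\asymp\pp^{-r}$ bounds the infimum by $C_q R^2\pp^{-4r\pa}$, and a direct comparison (using $\lambdap\lesssim R\,\pp^{-2r/q}$ together with $0<\pa<1/2$) shows $R^2\pp^{-4r\pa}\leq (R\pp^{-r})^{2q/(2-q)}\lambdap^{4(1-q)/(2-q)}$, the second term. Taking the maximum over the two regimes yields the claimed bound with an absolute constant $C_q$ depending only on $q$.

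I expect the one–dimensional optimization to be the main obstacle: the transfer inequality for the $K$-functional is short once the two definitions are unwound, but the bookkeeping of the exponents through the two regimes, and in particular verifying that the ``truncation'' regime glues cleanly into the second member of the maximum, is where the calculation must be done carefully.
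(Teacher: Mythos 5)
Your proposal is correct and follows essentially the same route as the paper's proof: reduce to the $K_{\mathcal{D}_{p}}$-functional via Lemma \ref{propgenerale}, transfer the $\bqr$-smoothness to the truncated dictionary through the definitions of $\bqr$ and $\lur$, and optimize over $\delta$ in the two regimes separated by $\lambdap \asymp R\, p^{-2r/q}$. Your shifted-parameter trick $\delta' = \delta + p^{-r}$, yielding $K_{\mathcal{D}_{p}}(\f,\delta) \leq R\left(\delta+p^{-r}\right)^{2\pa}$, is a mild streamlining of the paper's intermediate bound $R\left(2\delta^{2\pa}+\delta^{2\pa-1}p^{-r}\right)$, and your regime-two comparison $R^{2}p^{-4r\pa} \leq \left(Rp^{-r}\right)^{\frac{2q}{2-q}}\lambdap^{\frac{4(1-q)}{2-q}}$ under $\lambdap \leq R\, p^{-2r/q}$ checks out exactly (the exponents of $R$ and $p$ match after substituting the regime boundary, since $\frac{4(1-q)}{2-q}<0$).
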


\begin{remark}\label{eqcocnor}\droit{\textbf{[Orthonormal case]} Let us point out that
the abstract interpolation spaces $\bqr$ are in fact natural extensions to
non-orthonormal dictionaries of function spaces that are commonly studied in
statistics to analyze the approximation performance of estimators in the
orthonormal case, that is to say Besov spaces, strong-$\mathcal{L}_{q}$ spaces
and weak-$\mathcal{L}_{q}$ spaces. More precisely, recall that if $\mathbb{H}$
denotes a Hilbert space and $\mathcal{D} = \{\vp_{j}\}_{j \in \mathbb{N}^*}$ is
an orthonormal basis of $\mathbb{H}$, then, for all $r>0$, $q>0$ and $R>0$, we
say that $g=\sum_{j=1}^{\infty}\theta_{j}\, \vp_{j}$ belongs to the Besov space
$\besov(R)$ if}
\begin{equation}\label{defbesovr}\sup_{J \in
\mathbb{N}^*}\left(J^{2r}\sum_{j=J}^{\infty} \theta_{j}^{2}\right) \leq
R^2,\end{equation} \droit{while $g$ is said to belong to $\swlq(R)$ if}
\begin{equation}\label{deflqr}\sum_{j=1}^{\infty} \vert\theta_{j}\vert^q \leq R^{q},\end{equation}
\droit{and a slightly weaker condition is that $g$ belongs to $\wlq(R)$, that
is to say}
\begin{equation}\label{defwlqr}\sup_{\eta>0} \left(\eta^q \sum_{j=1}^{\infty}
\mathds{1}_{\{\vert\theta_{j}\vert>\eta\}}\right) \leq  R^{q}.\end{equation}
\droit{Then, we prove in Section \ref{proofs} that for all $1<q<2$ and $r>0$,
there exists $C_{q,r}>0$ depending only on $q$ and $r$ such that the following
inclusions of spaces hold for all $R>0$ when $\mathcal{D}$ is an orthonormal
basis of $\mathbb{H}$:}
\begin{equation}\label{eqcocno}\swlq(R)\cap\besov(R)\, \subset \, \wlq(R) \cap \besov(R)\, \subset
\, \bqr(C_{q,r}\, R).\end{equation} \droit{In particular, these inclusions shall
turn out to be useful to check the optimality of the rates of convergence of
the \lassotype estimator in Section \ref{lowb}.}
\end{remark}

%%%%%%%%%%%%%%%%%%%%%%%%%%%%%%%%%%%%%%%%%%%%%%%%%%%%%%%%%%%%%%%%%%%%%%%%%%%%%%%%%%%%%%
\subsection{Upper bounds of the quadratic risk of the
estimators}\label{rateest}

The rates of convergence of the deterministic \lasso s $\flp$ given in Lemma
\ref{majinf} can now be passed on to the \lasso\ estimators $\fcp$,
$p\in\ensn$, and to the \lassotype estimator $\fcf$ thanks to the oracle
inequalities \eqref{majriskl} and \eqref{majrisklt} respectively.

\smallskip
\begin{proposition}\label{propnvestlasso} Let $1<q<2$, $r>0$ and $R>0$.
Assume that $\f \in \bqr(R)$.

\n Then, there exists $C_{q}>0$ depending only on $q$ such that, for all $p\in
\ensn$,
\begin{itemize}
\item if $\left(\sqrt{\ln p}+1\right)^{\frac{q-1}{q}} \leq R \eps^{-1} \leq
    p^{\frac{2r}{q}} \left(\sqrt{\ln p}+1\right)$, then
\begin{equation}\label{onzes}\mathbb{E}\left[\norm{\f -\fcp}^{2} \right]  \leq
C_{q}\,
R^{q}
\left(\eps \left(\sqrt{\ln p}+1\right)\right)^{2-q},\end{equation}
\item if $R \eps^{-1} > p^{\frac{2r}{q}} \left(\sqrt{\ln p}+1\right)$, then
\begin{equation}\label{douzes}\mathbb{E}\left[\norm{\f -\fcp}^{2} \right]  \leq  C_{q}
\left(R\, p^{-r}\right)^{\frac{2q}{2-q}}\, \left(\eps \left(\sqrt{\ln
p}+1\right)\right)^{\frac{4(1-q)}{2-q}},\end{equation}
\item if $R \eps^{-1} <\left(\sqrt{\ln p}+1\right)^{\frac{q-1}{q}}$, then
\begin{equation}\label{treizes}\mathbb{E}\left[\norm{\f -\fcp}^{2} \right]  \leq
C_{q}\,
\varepsilon^2 \left(\sqrt{\logn
p}+1\right).\end{equation} \end{itemize}\end{proposition}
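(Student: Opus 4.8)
The plan is to combine the oracle inequality \eqref{majriskl} with the approximation bound of Lemma \ref{majinf}, and then resolve, regime by regime, which of the resulting terms dominates. First I would start from \eqref{majriskl},
\[
\mathbb{E}\left[\norm{\f-\fcp}^{2}\right] \leq C\left[\inf_{\ft\in\ludp}\left(\norm{\f-\ft}^{2}+\lambdap\normup{\ft}\right)+\lambdap\eps\right],
\]
and bound the infimum term by \eqref{colless}. Substituting $\lambdap = 4\eps(\sqrt{\logn p}+1)$ and absorbing the resulting numerical factors (e.g.\ $4^{2-q}$) into the constant, this produces three candidate quantities whose maximum controls the risk: the approximation term $T_{1} := R^{q}\big(\eps(\sqrt{\logn p}+1)\big)^{2-q}$, the truncation term $T_{2} := \big(Rp^{-r}\big)^{2q/(2-q)}\big(\eps(\sqrt{\logn p}+1)\big)^{4(1-q)/(2-q)}$ arising from the second argument of the max in \eqref{colless}, and the stochastic term $T_{3} := \lambdap\eps$, proportional to $\eps^{2}(\sqrt{\logn p}+1)$. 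Since $a+b\leq 2\max(a,b)$, it suffices to show that in each regime one of $T_{1},T_{2},T_{3}$ dominates the other two and that the dominant term is exactly the one appearing in \eqref{onzes}, \eqref{douzes}, or \eqref{treizes}.

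The core of the argument is then the pairwise comparison of these three terms, which reduces to elementary manipulations of exponents. Writing $\mu := \eps(\sqrt{\logn p}+1)$, the comparison of $T_{1}$ with $T_{3}$ amounts to $R^{q}\mu^{2-q}\geq \eps^{2}(\sqrt{\logn p}+1)$, which simplifies to $(R/\eps)^{q}\geq(\sqrt{\logn p}+1)^{q-1}$, i.e.\ $R\eps^{-1}\geq(\sqrt{\logn p}+1)^{(q-1)/q}$; this is exactly the lower threshold separating the first regime from the third. The comparison of $T_{1}$ with $T_{2}$ relies on the key cancellation $4(1-q)-(2-q)^{2}=-q^{2}$, after which $T_{1}\geq T_{2}$ becomes equivalent to $R\eps^{-1}\leq p^{2r/q}(\sqrt{\logn p}+1)$, which is exactly the upper threshold of the first regime.

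With these two equivalences in hand, the three regimes follow directly. In the first regime both inequalities hold, so $T_{1}$ is the maximum and \eqref{onzes} follows. In the second regime $R\eps^{-1}>p^{2r/q}(\sqrt{\logn p}+1)$ forces $T_{2}>T_{1}$; since $p^{2r/q}(\sqrt{\logn p}+1)\geq(\sqrt{\logn p}+1)^{(q-1)/q}$ the same hypothesis also gives $T_{1}>T_{3}$, so $T_{2}$ dominates and \eqref{douzes} follows. In the third regime $R\eps^{-1}<(\sqrt{\logn p}+1)^{(q-1)/q}$ gives $T_{3}>T_{1}$, and because this threshold lies below $p^{2r/q}(\sqrt{\logn p}+1)$ we still have $T_{1}\geq T_{2}$, so $T_{3}$ dominates and \eqref{treizes} follows. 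The main obstacle is not conceptual but purely computational: keeping the exponents straight when verifying that the domination boundaries of $T_{1},T_{2},T_{3}$ coincide exactly with the three stated conditions on $R\eps^{-1}$. Everything else is routine.
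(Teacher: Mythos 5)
Your proposal is correct and takes exactly the paper's route: the published proof likewise combines \eqref{majriskl} with \eqref{colless}, substitutes $\lambdap = 4\eps(\sqrt{\ln p}+1)$, and concludes ``by comparing the three terms inside the maximum according to the value of $p$,'' leaving the comparisons to the reader. Your explicit execution of those comparisons checks out, including the key cancellation $4(1-q)-(2-q)^2=-q^2$ and the verification that the domination thresholds are precisely $R\eps^{-1}=(\sqrt{\ln p}+1)^{(q-1)/q}$ and $R\eps^{-1}=p^{2r/q}(\sqrt{\ln p}+1)$.
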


\smallskip
\begin{proposition}\label{propnvest} Let $1<q<2$ and $r>0$. Assume that $\f \in
\bqr(R)$ with $R>0$ such that $R\eps^{-1}\geq
\max\left(\text{\upshape{e}},(4r)^{-1}q\right)$.

\n Then,  there exists $C_{q,r}>0$ depending only on $q$ and $r$ such that the
quadratic risk of $\fcf$ satisfies
\begin{equation}\label{quatorzes}\mathbb{E}\left[\norm{\f -\fcf}^{2} \right]  \leq  C_{q,r}\,
R^q \left(\eps\sqrt{\ln\left(R\eps^{-1}\right)}\right)^{2-q}.\end{equation}
\end{proposition}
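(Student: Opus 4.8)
The plan is to feed the deterministic approximation bound of Lemma~\ref{majinf} into the oracle inequality \eqref{majrisklt} of Theorem~\ref{zlassocasinfini}, and then to exploit the infimum over the dyadic grid $\ens$ by selecting a single well-chosen index. Writing $T_{1}(\pp) := R^{q}\lambdap^{2-q}$ and $T_{2}(\pp) := (R\pp^{-r})^{\frac{2q}{2-q}}\lambdap^{\frac{4(1-q)}{2-q}}$, Lemma~\ref{majinf} bounds the inner infimum in \eqref{majrisklt} by $C_{q}\max(T_{1}(\pp),T_{2}(\pp))$, so that
\[
\mathbb{E}\left[\norm{\f-\fcf}^{2}\right] \leq C\left[\inf_{\pp\in\ens}\left(C_{q}\max(T_{1}(\pp),T_{2}(\pp))+\penp\right)+\eps^{2}\right].
\]
Since $\lambdap=4\eps(\sqrt{\logn \pp}+1)$ and $q>1$, both $T_{1}(\pp)$ and the penalty $\penp=5\eps^{2}\logn \pp$ grow only logarithmically in $\pp$, whereas $T_{2}(\pp)$ decreases polynomially in $\pp$; hence the near-optimal index is essentially the smallest $\pp$ for which $T_{2}(\pp)\leq T_{1}(\pp)$.

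Next I would pin down this index. A direct computation gives $T_{2}(\pp)/T_{1}(\pp)=(R\pp^{-2r/q}/\lambdap)^{q^{2}/(2-q)}$, so that $T_{2}(\pp)\leq T_{1}(\pp)$ exactly when $R\eps^{-1}\leq 4\,\pp^{2r/q}(\sqrt{\logn \pp}+1)$, which is precisely the first regime of Proposition~\ref{propnvestlasso}. I would therefore take $\pp^{*}$ to be the smallest dyadic integer with $(\pp^{*})^{2r/q}\geq R\eps^{-1}$; the hypothesis $R\eps^{-1}\geq\max(\mathrm{e},(4r)^{-1}q)$ guarantees that such a $\pp^{*}\in\ens$ exists, is at least $2$, and --- by minimality together with the factor-two spacing of the dyadic grid --- satisfies $\frac{q}{2r}\logn(R\eps^{-1})\leq \logn \pp^{*}\leq \logn 2+\frac{q}{2r}\logn(R\eps^{-1})$. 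Since $R(\pp^{*})^{-2r/q}\leq\eps\leq\lambda_{\pp^{*}}$, this choice indeed forces $\max(T_{1}(\pp^{*}),T_{2}(\pp^{*}))=T_{1}(\pp^{*})$.

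It then remains to estimate the three contributions at $\pp=\pp^{*}$. Using $\logn(R\eps^{-1})\geq 1$ (from $R\eps^{-1}\geq\mathrm{e}$) to absorb the additive $\logn 2$ and the $+1$ inside $\lambdap$, one gets $\sqrt{\logn \pp^{*}}+1\leq C_{q,r}\sqrt{\logn(R\eps^{-1})}$, whence $T_{1}(\pp^{*})=R^{q}(4\eps(\sqrt{\logn \pp^{*}}+1))^{2-q}\leq C_{q,r}\,R^{q}(\eps\sqrt{\logn(R\eps^{-1})})^{2-q}$. For the remaining terms I would note that the penalty $5\eps^{2}\logn \pp^{*}\leq C_{q,r}\eps^{2}\logn(R\eps^{-1})$ and the residual $\eps^{2}$ are both dominated by the same quantity: dividing by $R^{q}(\eps\sqrt{\logn(R\eps^{-1})})^{2-q}$ yields factors bounded respectively by $(\sqrt{\logn x}/x)^{q}$ and $x^{-q}(\logn x)^{-(2-q)/2}$ with $x:=R\eps^{-1}\geq\mathrm{e}$, and both are bounded by an absolute constant since $\sqrt{\logn x}/x$ is decreasing on $[\mathrm{e},\infty)$. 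Plugging these three bounds back, using the single index $\pp^{*}$ to majorize the infimum over $\ens$, and absorbing the finitely many bounded factors into $C_{q,r}$ gives \eqref{quatorzes}.

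The main obstacle is the bookkeeping of this last paragraph combined with the selection of $\pp^{*}$: one must verify the crossover inequality $T_{2}\leq T_{1}$ at the chosen dyadic index and, simultaneously, control $\logn \pp^{*}$ from above by $\logn(R\eps^{-1})$ up to a constant depending only on $q$ and $r$, so that the logarithmic factor $(\sqrt{\logn \pp^{*}})^{2-q}$ collapses to $(\sqrt{\logn(R\eps^{-1})})^{2-q}$ and the penalty does not dominate. The role of the lower bounds on the signal-to-noise ratio $R\eps^{-1}$ is exactly to make this collapse quantitative and to keep every constant depending only on $(q,r)$.
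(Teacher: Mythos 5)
Your proposal is correct and takes essentially the same route as the paper's own proof: you feed Lemma~\ref{majinf} into the oracle inequality \eqref{majrisklt} and evaluate at the smallest dyadic integer $p^*$ with $(p^*)^{2r/q}\geq R\eps^{-1}$, which is exactly the paper's choice $p_{q,r}=2^{\lceil (q/2r)\log_{2}(R\eps^{-1})\rceil}$, so that $\logn p^*$ is comparable to $\logn(R\eps^{-1})$ up to constants depending only on $(q,r)$, and the penalty and residual $\eps^2$ terms are absorbed via the boundedness of $x\mapsto\sqrt{\logn x}/x$ on $[\mathrm{e},\infty)$, just as in the paper's treatment through $g(x)=\logn x/x$. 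The only (harmless) variations are that you collapse the maximum to $T_{1}$ at $p^*$ via the exact ratio $T_{2}/T_{1}=\bigl(R\,p^{-2r/q}/\lambda_{p}\bigr)^{q^{2}/(2-q)}$ instead of bounding $T_{2}(p^*)$ separately as in \eqref{mamia}, and that you absorb the $\logn 2$ from the dyadic rounding using only $\logn(R\eps^{-1})\geq 1$, thereby never actually needing the hypothesis $R\eps^{-1}\geq (4r)^{-1}q$ that the paper invokes at the corresponding step.
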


\begin{remark} \droit{\ }
\begin{enumerate}
\droit{\item Notice that the assumption  $R \eps^{-1} \geq
\max\left(\text{\upshape{e}},(4r)^{-1}q\right)$ of Proposition
\ref{propnvest} is not restrictive since it only means that we consider
non-degenerate situ\-ations when the signal to noise ratio is large enough,
which is the only interesting case to use the \lassotype estimator. Indeed,
if $R \eps^{-1}$ is too small, then the estimator equal to zero will always
be better than any other non-zero estimators, in particular \lasso\
estimators.} \droit{\item Proposition \ref{propnvest} highlights the fact
that the \lassotype estimator can simultaneously achieve rates of
convergence of order $\left(\eps \sqrt{\ln\left(\normbqr{\f}
\eps^{-1}\right)}\right)^{2-q}$ for all classes $\bqr$ without knowing
which class contains $\f$. Besides, comparing the upper bound
\eqref{quatorzes} to the lower bound \eqref{bag} established in the next
section for the minimax risk when the dictionary~$\mathcal{D}$ is an
orthonormal basis of $\mathbb{H}$ and $r < 1/q-1/2$, we see that they can
match up to a constant if the signal to noise ratio is large enough. This
proves that the rate of convergence \eqref{quatorzes} achieved by $\fcf$ is
optimal.} \droit{\item Analyzing the different results of Proposition
\ref{propnvestlasso}, we can notice that, unlike the \lassotype estimator,
the \lasso\ estimators are not adaptative. In particular, comparing
\eqref{onzes} to \eqref{quatorzes}, we see that the \lasso s $\fcp$ are
likely to achieve the optimal rate of convergence \eqref{quatorzes} only
for $p$ large enough, more precisely $p$ such that $R \eps^{-1} \leq
p^{2r/q}(\sqrt{\ln p}+1)$. For smaller values of $p$, truncating the
dictionary at level $p$ affects the rate of convergence as it is shown at
\eqref{douzes}. The problem is that $q$ and $r$ are unknown since they are
the parameters characterizing the smoothness of the unknown target
function. Therefore, when one chooses a level $p$ of truncation of the
dictionary, one does not know if $R \eps^{-1} \leq p^{2r/q}(\sqrt{\ln
p}+1)$ and thus if the corresponding \lasso\ estimator $\fcp$ has a good
rate of convergence. When working with the \lasso s, the statistician is
faced with a dilemma since one has to choose $p$ large enough to get an
optimal rate of convergence, but the larger $p$ the less sparse and
interpretable the model. The advantage of using the \lassotype estimator
rather than the \lasso s is that, by construction of $\fcf$, we are sure to
get an estimator making the best tradeoff between approximation,
$\ell_{1}$-regularization and sparsity and achieving desirable rates of
convergence for any target function belonging to some interpolation space
$\bqr$.} \droit{\item Looking at the different results from \eqref{onzes}
to \eqref{quatorzes}, we can notice that the parameter $q$ has much more
influence on the rates of convergence than the parameter $r$ since the
rates are of order depending only on the parame\-ter~$q$ while the
dependence on $r$ appears only in the multiplicative factor. Nonetheless,
note that the smoother the target function with respect to the parameter
$r$, the smaller the number of variables necessary to keep to get a good
rate of convergence for the \lasso\ estimators. Indeed, on the one hand, it
is easy to check that $\bqr(R) \subset \mathcal{B}_{q,r'}(R)$ for $r
> r' >0$ which means that the
smoothness of $\f$ increases with $r$, while on the other hand,
$p^{2r/q}(\sqrt{\ln p}+1)$ increases with respect to $r$ so that the larger
$r$ the smaller $p$ satisfying the constraint necessary for the \lasso\
$\fcp$ to achieve the optimal rate of convergence \eqref{onzes}.}
\droit{\item Proposition \ref{propnvestlasso} shows that the \lasso s
$\fcp$ perform as well as the greedy algorithms stu\-died by Barron and
al.\! in \cite{bcdd}. Indeed, in the case of the fixed design Gaussian
regression model introduced in Example~\ref{excglrm} with a sample of size
$n$, we have $\eps = \sigma/\sqrt{n}$ and \eqref{onzes} yields that the
\lasso\ estimator $\fcp$ achieves a rate of convergence of order $R^{q}
\left(n^{-1}\ln p\right)^{1-q/2}$ provided that $R \eps^{-1}$ is
well-chosen, which corresponds to the rate of convergence esta\-blished by
    Barron and al.\! for the greedy algorithms. Similarly to our result,
Barron and al.\! need to assume that the dictionary is large enough so as
to ensure such rates of convergence. In fact, they consider truncated
dictionaries of size $p$ greater than $n^{1/(2r)}$ with $n^{1/q-1/2} \geq
\normbqr{\f}$. Under these assumptions, we recover the upper bound we
impose on $R\eps^{-1}$  to get the rate \eqref{onzes}.}\end{enumerate}
\end{remark}

\smallskip
\begin{remark}\droit{\textbf{[Orthonormal case]}}
\begin{enumerate}
\droit{\item Notice that the rates of convergence provided for the \lasso\
    estimators in Proposition \ref{propnvestlasso} are a generalization to
    non-orthonormal dictionaries of the well-known performance bounds of
    soft-thres\-holding estimators in the orthonormal case. Indeed, when
    the dictionary $\mathcal{D} = \{\vp_{j}\}_{j}$ is an orthonormal basis
    of $\mathbb{H}$, if we set $\Theta_{p} := \left\{\theta
    =(\theta_{j})_{j\in \mathbb{N}^*},\ \theta =
 \left(\theta_{1},\dots,\theta_{\pp},0,\dots,0,\dots\right)\right\}$ and
calculate the subdifferential of the function $\theta \in \Theta_{p}
\mapsto \crit{\theta.\vp}+\lambdap\norm{\theta}_{1}$, where the function
$\gamma$ is defined by \eqref{defgammah}, we easily get that $\fcp =
\hat{\theta}_{p}.\vp$ with $\hat{\theta}_{p} =
(\widehat{\theta}_{p,1},\dots,\widehat{\theta}_{p,p},0,\dots,0,\dots)$
where for all $j=1,\dots,\pp$,}
$$\widehat{\theta}_{p,j}\ = \
 \left\{\begin{array}{ll}
Y(\vp_{j}) - \lambdap/2 & \quad \text{\droit{if} } Y(\vp_{j}) > \lambdap/2 = 2\eps\left(\sqrt{\ln p}+1\right),\\
Y(\vp_{j}) + \lambdap/2 & \quad \text{\droit{if} } Y(\vp_{j}) < -\lambdap/2 = 2\eps\left(\sqrt{\ln p}+1\right),\\
0 & \quad \text{\droit{else}} ,\end{array}\right.$$ \droit{where $Y$ is
defined by \eqref{statpbg}. Thus, the \lasso\ estimators $\fcp$ correspond
to soft-thresholding estimators with thresholds of order $\eps\sqrt{\ln
p}$, and Proposition \ref{propnvestlasso} together with the inclusions of
spaces \eqref{eqcocno} enable to recover the well-known rates of
convergence of order $(\eps \sqrt{\ln p})^{2-q}$ for such thres\-holding
estimators when the target function belongs to $\wlq\cap\besov$ (see for
instance~\cite{cohendevorepicard} for the establishment of such rates of
convergence for estimators based on wavelet thresholding in the white noise
framework).} \droit{\item Let us stress that, in the orthonormal case,
since the \lasso\ estimators $\fcp$ correspond to soft-thresholding
estimators with thresholds of order $\eps\sqrt{\ln p}$, then the \lassotype
estimator $\fcf$ can be viewed as a soft-thresholding estimator with
adapted threshold $\eps\sqrt{\ln \hat{p}}$.}
\end{enumerate} \end{remark}

%%%%%%%%%%%%%%%%%%%%%%%%%%%%%%%%%%%%%%%%%%%%%%%%%%%%%%%%%%%%%%%%%%%%%%%%%%%%%%%%%%%
\subsection{Lower bounds in the orthonormal case}\label{lowb}

To complete our study on the rates of convergence, we propose to establish a
lower bound of the minimax risk in the orthonormal case so as to prove that the
\lassotype estimator is simultaneously approximately minimax over spaces
$\swlq(R)\cap\besov(R)$ in the orthonormal case for suitable signal to noise
ratio $R\eps^{-1}$.

\begin{proposition}\label{minimax} Assume that the dictionary $\mathcal{D}$ is
an orthonormal basis of~$\mathbb{H}$. Let $1<q<2$, $0<r<1/q-1/2$ and $R>0$ such
that $R\eps^{-1}\geq \max(\text{\upshape{e}}^2,u^2)$ where
\begin{equation}\label{defu}u:=\frac{1}{r}-q\left(1+\frac{1}{2r}\right)>0.\end{equation} Then, there exists an
absolute constant $\kappa>0$ such that the minimax risk over $\swlq(\rw) \cap
\besov(\rb)$ satisfies
    \begin{equation}\label{fls}\inf_{\fct} \sup_{\f \in \swlq(\rw) \cap \besov(\rb)}
\mathbb{E}\left[\norm{\f-\fct}^2\right]  \geq   \kappa\, u^{1-\frac{q}{2}}\, \rw^{q} \left(\eps
\sqrt{\ln\left(R \eps^{-1}\right)}\right)^{2-q},\end{equation} where the infimum
is taken over all possible estimators $\fct$.
\end{proposition}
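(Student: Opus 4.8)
The plan is to prove the minimax lower bound \eqref{fls} by the standard information-theoretic route: reduce estimation over the full space $\swlq(\rw) \cap \besov(\rb)$ to a testing problem over a well-chosen finite subfamily, and then apply a Fano- or Assouad-type argument. Since the dictionary $\mathcal{D}$ is orthonormal, the model \eqref{statpbg} is equivalent to the sequence model in which one observes $y_{j} = \theta_{j} + \eps\, g_{j}$ with $g_{j}$ i.i.d.\ $\mathcal{N}(0,1)$ and $\f = \sum_{j} \theta_{j}\, \vp_{j}$; the squared loss $\norm{\f - \fct}^{2}$ becomes the $\ell_{2}$-loss $\sum_{j}(\theta_{j} - \tilde\theta_{j})^{2}$ on coefficients. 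Everything then reduces to choosing a finite hypercube of coefficient sequences sitting inside $\swlq(\rw) \cap \besov(\rb)$.

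\textbf{Constructing the hypercube.}
First I would fix an integer $m$ (to be optimized) and a common amplitude $a>0$, and consider the sub-cube of sequences supported on the coordinates $j \in \{1,\dots,m\}$ of the form $\theta_{j} \in \{0,a\}$. I must verify two membership constraints. The weak-$\swlq(\rw)$ constraint \eqref{defwlqr} forces $m\, a^{q} \lesssim \rw^{q}$, i.e.\ $a \lesssim \rw\, m^{-1/q}$; the Besov constraint \eqref{defbesovr}, which controls the tail sums $J^{2r}\sum_{j \geq J}\theta_{j}^{2}$, forces (since the support lies in the first $m$ coordinates) a bound of the form $m\, a^{2} \lesssim \rw^{2} m^{-2r}\cdot m$, giving another ceiling on $a$. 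Choosing $a \asymp \eps\sqrt{\ln(R\eps^{-1})}$ — the natural thresholding scale — and then reading off the largest admissible $m$ from these two constraints is the crux: this is where the exponent $2-q$ and the factor $u^{1-q/2}$ in \eqref{fls} are born. The role of the hypothesis $0<r<1/q-1/2$ and of $u>0$ defined in \eqref{defu} is precisely to guarantee that the weak-$\swlq$ constraint, not the Besov one, is the binding one, so that the effective dimension $m$ is of the polynomial-in-$(R\eps^{-1})$ order that produces the claimed rate.

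\textbf{Applying the testing bound.}
With the cube in hand I would invoke a Birgé–Massart / Assouad lemma: on a cube $\{0,a\}^{m}$ the minimax $\ell_{2}$-risk is bounded below by a constant times $m\, a^{2}$ provided the per-coordinate Kullback–Leibler divergence, which here is $a^{2}/(2\eps^{2})$, stays below a universal threshold — exactly the condition that fixes $a \asymp \eps\sqrt{\ln m}$ and lets $m$ scale like a power of $R\eps^{-1}$. Substituting the optimized $m$ and $a$ into $m\,a^{2}$ yields $\kappa\, u^{1-q/2}\, \rw^{q}\,(\eps\sqrt{\ln(R\eps^{-1})})^{2-q}$ after simplification; the lower bounds $R\eps^{-1} \geq \max(\mathrm{e}^{2}, u^{2})$ are the bookkeeping conditions ensuring $m \geq 1$, that $\ln m \asymp \ln(R\eps^{-1})$, and that the KL threshold is met.

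\textbf{Main obstacle.}
The delicate point is not the testing inequality itself — that is routine — but the simultaneous bookkeeping of the two membership constraints together with the KL constraint: one has three competing ceilings on the pair $(m,a)$, and one must check that the weak-$\swlq$ constraint is the active one and that the resulting $m$ is genuinely large (at least one, and comparable to a power of $R\eps^{-1}$) under the stated hypotheses on $r$ and $u$. Getting the constant factor $u^{1-q/2}$ to emerge cleanly from the interplay of these constraints is where the careful calculation lies; the rest is a standard reduction to the orthonormal sequence model already justified by the orthonormality of $\mathcal{D}$.
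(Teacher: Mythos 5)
There is a genuine gap at the heart of your testing step, and it is visible in your own formulation. On a dense hypercube $\{0,a\}^{m}$, Assouad's lemma requires the per-coordinate Kullback--Leibler divergence $a^{2}/(2\eps^{2})$ to stay below a universal constant, which forces $a\lesssim\eps$; it cannot simultaneously ``fix $a\asymp\eps\sqrt{\ln m}$'' as you assert. With $a\lesssim\eps$, the constraint $m\,a^{q}\leq R^{q}$ gives $m\asymp(R\eps^{-1})^{q}$, and the Assouad bound degenerates to $m\,a^{2}\asymp R^{q}\eps^{2-q}$: the factor $\left(\ln(R\eps^{-1})\right)^{1-q/2}$ in \eqref{fls} never appears. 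The logarithm is generated by \emph{sparsity}, not by pushing the amplitude past the KL threshold on a dense cube. This is exactly what the paper's proof does: it takes the sparse hypercube $\Theta(p,d,M)$ of coefficient vectors supported on the first $p$ coordinates with exactly $d$ nonzero entries of amplitude at most $M=\eps\sqrt{u\ln(R\eps^{-1})}$, where $d\asymp(RM^{-1})^{q}$ is dictated by the strong-$\ell_{q}$ ball and $p\asymp(RM^{-1})^{(2-q)/(2r)}$ by the Besov tail condition, and invokes Theorem 5 of Birg\'e--Massart \cite{BM99g}, whose lower bound $\kappa\, d\,\min\left(M^{2},\,\eps^{2}\left(1+\ln(p/d)\right)\right)$ carries the combinatorial entropy term $\ln(p/d)=u\ln(RM^{-1})$ that legitimises the large amplitude $M$. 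Your reading of the hypothesis $0<r<1/q-1/2$ is also off: $u>0$ guarantees $(2-q)/(2r)>q$, i.e.\ $p>d$, so the cube is genuinely sparse; it is not that the $\ell_{q}$ constraint ``binds'' while the Besov one is slack --- both are active, and it is their \emph{ratio} $p/d=(RM^{-1})^{u}$ that produces the logarithm and, through the definition of $M$, the factor $u^{1-q/2}$. The condition $R\eps^{-1}\geq\max(\mathrm{e}^{2},u^{2})$ then serves to show $\eps^{2}\left(1+\ln(p/d)\right)\geq M^{2}/2$, so that the minimum in the Birg\'e--Massart bound is of order $M^{2}$.

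Two further slips, minor by comparison. First, the proposition concerns the strong space $\swlq(R)$ of \eqref{deflqr}, not the weak space \eqref{defwlqr} you cite; for two-valued cubes the two constraints coincide, so this is harmless here, but lower-bounding over the smaller strong ball is the point of the statement. Second, your Besov bookkeeping for a cube spread over the first $m$ coordinates is incorrect: taking $J\asymp m/2$ in \eqref{defbesovr} yields the ceiling $m^{2r+1}a^{2}\lesssim R^{2}$, i.e.\ $a\lesssim R\,m^{-r-1/2}$, not $a^{2}\lesssim R^{2}m^{-2r}$ as you wrote. In the paper's sparse construction the analogous check is $p^{2r}M^{2}d\leq R^{2}$, which is precisely what the choices of $p$ and $d$ are engineered to satisfy.
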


\begin{remark}\droit{\ }

\begin{enumerate}
\droit{\item Notice that the lower bound \eqref{fls} depends much more on
the parameter~$q$ than on the parameter $r$ that only appears as a
    multiplicative factor through the term $u$. In fact, the assumption $\f
    \in \besov(R)$ is just added to the assumption $\f \in \swlq(R)$ in
    order to control the size of the high-level components of $\f$ in the
    orthonormal basis $\mathcal{D}$ (see the proof of Lemma~\ref{gentmam}
    to convince yourself), but this additional parameter of smoothness
    $r>0$ can be taken arbitrarily small and has little effect on the
    minimax risk.}

\droit{\item It turns out that the constraint $r<1/q-1/2$ of Proposition
\ref{minimax} is quite natural. Indeed, assume that $r>1/q-1/2$. Then, on
the one hand it is easy to check that, for all $R>0$, $\besov(R') \subset
\swlq(R)$ with $R' = (1-2^{ru})^{1/q}R$ where $u$ is defined by
    \eqref{defu}, and thus $\swlq(R) \cap \besov(R') = \besov(R')$. On the
    other hand, noticing that
 $R'<R$, we have $\besov(R') \subset \besov(R)$ and thus $\swlq(R) \cap
 \besov(R') \subset \swlq(R) \cap \besov(R)$. Consequently, $\besov(R')
\subset \swlq(R) \cap \besov(R) \subset \besov(R)$, and the intersection
space $\swlq(R) \cap \besov(R)$ is no longer a real intersection between a
strong-$\swlq$ space and a Besov space $\besov$ but rather a Besov space
$\besov$ itself. In this case, the lower bound of the minimax risk is known
to be of order $\eps^{4r/(2r+1)}$ (see \cite{donoho98} for instance), which
is no longer of the form~\eqref{fls}}.
\end{enumerate}
\end{remark}

Now, we can straightforwardly deduce from \eqref{eqcocno} and Proposition
\ref{minimax} the following result which proves that the rate of convergence
\eqref{quatorzes} achieved by the \lassotype estimator is optimal.

\begin{proposition}\label{vitmin}  Assume that the dictionary $\mathcal{D}$ is an
orthonormal basis of~$\mathbb{H}$. Let $1<q<2$, $0<r<1/q-1/2$ and $R>0$ such
that $R\eps^{-1}\geq \max(\text{\upshape{e}}^2,u^2)$ where
$$u:=\frac{1}{r}-q\left(1+\frac{1}{2r}\right)>0.$$
Then, there exists $C_{q,r}>0$ depending only on $q$ and $r$ such that the
minimax risk over $\bqr(R)$ satisfies
    \begin{equation}\label{bag}\inf_{\fct} \sup_{\f \in \bqr(R)}
\mathbb{E}\left[\norm{\f-\fct}^2\right]  \geq   C_{q,r}\, \rw^{q} \left(\eps\sqrt{
\ln\left(R \eps^{-1}\right)}\right)^{2-q},\end{equation}
where the infimum is taken over all possible estimators $\fct$.
\end{proposition}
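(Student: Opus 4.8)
The plan is to derive \eqref{bag} directly from the minimax lower bound \eqref{fls} of Proposition~\ref{minimax}, transporting it from the class $\swlq(R)\cap\besov(R)$ to the class $\bqr(R)$ by means of the space inclusion \eqref{eqcocno}. The only structural fact I would use is the elementary monotonicity of the minimax risk with respect to the parameter set: if $\mathcal{S}\subset\mathcal{S}'$, then $\inf_{\fct}\sup_{\f\in\mathcal{S}}\mathbb{E}[\norm{\f-\fct}^2]\leq\inf_{\fct}\sup_{\f\in\mathcal{S}'}\mathbb{E}[\norm{\f-\fct}^2]$, since every estimator faces at least as hard a problem over the larger set.

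First I would rescale the inclusion \eqref{eqcocno}. Since it holds for every radius, applying it with radius $\rho:=R/C_{q,r}$ in place of $R$ gives, in the orthonormal case,
\[
\swlq(\rho)\cap\besov(\rho)\ \subset\ \bqr\!\left(C_{q,r}\,\rho\right)=\bqr(R).
\]
By the monotonicity above, this inclusion immediately yields
\[
\inf_{\fct}\sup_{\f\in\bqr(R)}\mathbb{E}\!\left[\norm{\f-\fct}^2\right]\ \geq\ \inf_{\fct}\sup_{\f\in\swlq(\rho)\cap\besov(\rho)}\mathbb{E}\!\left[\norm{\f-\fct}^2\right].
\]

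The second step is to apply Proposition~\ref{minimax} with $R$ replaced by $\rho$, which bounds the right-hand side below by $\kappa\,u^{1-q/2}\,\rho^{q}\,(\eps\sqrt{\ln(\rho\eps^{-1})})^{2-q}$, with $u$ as in \eqref{defu}. It then remains to re-express this quantity in terms of $R$. The prefactor $u^{1-q/2}\rho^{q}=u^{1-q/2}C_{q,r}^{-q}R^{q}$ contributes only a factor depending on $q$ and $r$, which is allowed in \eqref{bag}, and the factor $\eps^{2-q}$ is unchanged. The sole nontrivial point is the logarithm: writing $\ln(\rho\eps^{-1})=\ln(R\eps^{-1})-\ln C_{q,r}$, I would invoke the standing assumption $R\eps^{-1}\geq e^2$, so that $\ln(R\eps^{-1})\geq2$, to bound $\ln(\rho\eps^{-1})$ below by a fixed positive fraction of $\ln(R\eps^{-1})$; this absorbs $\ln C_{q,r}$ into the final constant and produces exactly the form $C_{q,r}\,R^{q}(\eps\sqrt{\ln(R\eps^{-1})})^{2-q}$ claimed in \eqref{bag}.

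I expect the main (and really the only) obstacle to be the bookkeeping around the embedding constant $C_{q,r}$. One must verify that the hypothesis of Proposition~\ref{minimax} survives the substitution $R\mapsto\rho$, namely that $\rho\eps^{-1}=R/(C_{q,r}\eps)$ still exceeds $\max(e^2,u^2)$; when $C_{q,r}\geq1$ this is slightly more demanding than the stated condition on $R\eps^{-1}$, so one has to ensure that $R\eps^{-1}$ is large enough relative to $C_{q,r}$ for both the applicability of Proposition~\ref{minimax} and the logarithm comparison above to go through, the residual discrepancy being reabsorbed into the constant of \eqref{bag}. No new probabilistic argument is required: all the hard work is already contained in the lower bound \eqref{fls} and the inclusion \eqref{eqcocno}, which is why the deduction is essentially immediate once this scaling is handled.
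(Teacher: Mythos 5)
Your proposal is correct and follows exactly the route the paper itself takes: the paper deduces Proposition \ref{vitmin} in one line from the inclusion \eqref{eqcocno} and Proposition \ref{minimax}, using precisely the monotonicity of the minimax risk under set inclusion together with the rescaling $R\mapsto\rho=R/C_{q,r}$ that you spell out. If anything you are more careful than the paper, which declares the deduction ``straightforward'' and silently glosses over the bookkeeping you flag, namely that for the substituted radius one needs $\rho\eps^{-1}\geq\max(\mathrm{e}^2,u^2)$, i.e.\ in principle the hypothesis should read $R\eps^{-1}\geq C_{q,r}\max(\mathrm{e}^2,u^2)$, with the logarithm comparison $\ln(\rho\eps^{-1})\gtrsim\ln(R\eps^{-1})$ then absorbing $\ln C_{q,r}$ into the final constant.
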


\begin{remark}\droit{Looking at \eqref{eqcocno}, one could have obtained a result
similar to \eqref{bag} by bounding from below the minimax risk over $\wlq(R)
\cap \besov(R)$ instead of $\swlq(R)\cap\besov(R)$ as it is done in Proposition
\ref{minimax}. We refer the interested reader to Theorem 1 in \cite{rivoirard}
for the establishment of such a result.}
\end{remark}

%%%%%%%%%%%%%%%%%%%%%%%%%%%%%%%%%%%%%%%%%%%%%%%%%%%%%%%%%%%%%%%%%%%%%%%%%%%%%%%%%%
%%%%%%%%%%%%%%%%%%%%%%%%%%%%%%%%%%%%%%%%%%%%%%%%%%%%%%%%%%%%%%%%%%%%%%%%%%%%%%%

\section{The \lasso\ for uncountable dictionaries : neural
networks}\label{section5}

In this section, we propose to provide some theoretical results on the
performance of the \lasso\ when considering some particular infinite
uncountable dictionaries such as those used for neural networks in the fixed
design Gaussian regression models. Of course, there is no algorithm to
approximate the \lasso\ solution for infinite dictionaries, so the following
results are just to be seen as theoretical performance of the \lasso. We shall
provide an $\ell_{1}$-oracle type ine\-quality satisfied by the \lasso\ and
deduce rates of convergence of this estimator whenever the target function
belongs to some interpolation space between $\lud$ and the Hilbert space
$\mathbb{H}=\mathbb{R}^{n}$. These results will again prove that the \lasso\
theoretically performs as well as the greedy algorithms introduced in
\cite{bcdd}.

\medskip In the artificial intelligence field, the introduction of artificial
neural networks have been motivated by the desire to model the human brain by a
computer. They have been applied successfully to pattern recognition (radar
systems, face identification...), sequence recognition (gesture, speech...),
image analysis, adaptative control, and their study can enable the
reconstruction of software agents (in computer, video games...) or autonomous
robots for instance. Artificial neural networks receive a number of input
signals and produce an output signal. They consist of multiple layers of
weighted-sum units, called neurons, which are of the type
\begin{equation}\label{defdicorn}\vp_{\q,\w} :\  \mathbb{R}^d
\mapsto \mathbb{R},\ \ x \mapsto \chi\left(\langle \q,x\rangle +
\w\right),\end{equation} where $\q \in \mathbb{R}^d$, $\w \in \mathbb{R}$ and
$\chi$ is the Heaviside function $\chi(x) = \mathds{1}_{\{x >0\}}$ or more
generally a sigmoid function. Here, we shall restrict to the case of $\chi$
being the Heaviside function. In other words, if we consider the infinite
uncountable dictionary $\mathcal{D}=\{\vp_{\q,\w}\, ;\ \q \in \mathbb{R}^\dd,
\w \in \mathbb{R}\}$, then a neural network is a real-valued function defined
on $\mathbb{R}^d$
 belonging to the linear span of $\mathcal{D}$.

\medskip Let us now consider the fixed design Gaussian regression model
introduced
 in Example \ref{excglrm} with neural network regression function estimators.
 Given a training sequence $\{(x_{1},Y_{1}),\dots,(x_{n},Y_{n})\}$, we assume
 that $Y_{i} = \f(x_{i}) + \sigma \e$ for all $i=1,\dots,n$ and
 we study the \lasso\ estimator over the set of neural network regression function estimators in
 $\lud$,  \begin{equation}\label{dellassorn}\fc:= \fc(\p)  = \argmin_{\ft \in \lud}\ \norm{Y-\ft}^2 +
\p \normu{\ft},\end{equation} where $\p >0$ is a regularization parameter,
$\lud$ is the linear span of $\mathcal{D}$ equipped with the $\ell_{1}$-norm
\begin{equation}\label{defludrn}\norm{\ft}_{\mathcal{L}_{1}(\mathcal{D})} :=
\inf\left\{\norm{\theta}_{1} = \sum_{\q \in \mathbb{R}^d, \w \in \mathbb{R}}
\vert\theta_{\q,\w}\vert, \ \ \ft = \theta.\vp = \sum_{\q \in \mathbb{R}^d, \w
\in \mathbb{R}} \theta_{\q,\w}\, \vp_{\q,\w}\right\}\end{equation}   and
$\norm{Y-\ft}^2 := \sum_{i=1}^{n} \left(Y_{i}-\ft(x_{i})\right)^2/n$ is the
empirical risk of $h$.

\subsection{An $\ell_{1}$-oracle type inequality}

Despite the fact that the dictionary $\mathcal{D}$ for neural networks is
infinite uncounta\-ble, we are able to establish an $\ell_{1}$-oracle type
inequality satisfied by the \lasso\ which is similar to the one provided in
Theorem \ref{zlasso} in the case of a finite dictionary. This is due to the
very particular structure of the dictionary $\mathcal{D}$ which is only
composed of functions derived from the Heaviside function. This property
enables us to achieve theoretical results without truncating the whole
dictionary into finite subdictionaries contrary to the study developed in
Section~\ref{section3} where we considered arbitrary infinite countable
dictionaries (see Remark \ref{lien} for more details). The following
$\ell_{1}$-oracle type inequality is once again a direct application of the
general model selection Theorem \ref{zmaingaussnl} already used to prove both
Theorem~\ref{zlasso} and Theorem \ref{zlassocasinfini}.
\begin{theorem}\label{zlassorn}
Assume that $$\p \geq \frac{28\, \sigma}{\sqrt{n}} \left(\sqrt{\logn
\left((n+1)^{\dd+1}\right)}+4\right).$$ Consider the corresponding Lasso
estimator $\fc$ defined by \eqref{dellassorn}.

\n Then, there exists an absolute constant $C>0$ such that
$$\mathbb{E}\left[\norm{\f-\fc}^{2}
+ \p \normu{\fc}\right] \leq C\left[ \inf_{\ft \in \lu} \left(
\norm{\f-\ft}^{2}+\p \normu{\ft}\right) +\p \frac{\sigma}{\sqrt{n}}\right].$$
\end{theorem}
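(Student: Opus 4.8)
The plan is to mimic the proof of Theorem~\ref{zlasso}, reading the Lasso $\fc$ as a penalized least squares estimator that selects among the nested collection of $\ell_{1}$-balls $S_{R} = \{\ft \in \lud : \normu{\ft} \leq R\}$, with $R$ ranging over a suitable countable grid, and then applying the general model selection Theorem~\ref{zmaingaussnl}. The translation of the $\ell_{1}$-penalty into a penalty proportional to the radius $R$ is exactly the one set up in Section~\ref{mst}, and it is insensitive to the nature of the dictionary. The only place where finiteness of $\mathcal{D}_{p}$ was used in the proof of Theorem~\ref{zlasso} is in the control of the isonormal Gaussian process $W$ on each ball $S_{R}$. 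Consequently the entire argument carries over provided I can bound the relevant expected supremum of $W$ over $S_{R}$ for the neural network dictionary, and it is this single quantity that dictates the admissible lower bound on $\p$.

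First I would reduce the supremum over a ball to a supremum over the dictionary. Since $S_{R}$ is the closed symmetric convex hull of $\{R\,\phi_{a,b}\}$, any linear functional attains its supremum over $S_{R}$ at an extreme point, so that $\sup_{\ft \in S_{R}} W(\ft) = R\,\sup_{a \in \mathbb{R}^{d},\, b \in \mathbb{R}} |W(\phi_{a,b})|$. In the fixed design framework of Example~\ref{excglrm} one has $W(\phi_{a,b}) = \sqrt{n}\,\langle \xi, \phi_{a,b}\rangle$, a centered Gaussian variable with variance $\norm{\phi_{a,b}}^{2} = n^{-1}\sum_{i=1}^{n} \phi_{a,b}(x_{i})^{2} \leq 1$, the bound coming from the fact that each value $\phi_{a,b}(x_{i}) = \chi(\langle a, x_{i}\rangle + b)$ lies in $\{0,1\}$.

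The main obstacle, and the only genuinely new ingredient compared with the finite case, is that the index set $\{\phi_{a,b}\}$ is uncountable, so one cannot directly invoke a maximum of finitely many Gaussian variables. The key observation is that the vector $(\phi_{a,b}(x_{1}), \dots, \phi_{a,b}(x_{n}))$ only records which of the fixed points $x_{1}, \dots, x_{n}$ fall in the affine halfspace $\{\langle a, \cdot\rangle + b > 0\}$. The class of affine halfspaces of $\mathbb{R}^{d}$ has Vapnik--Chervonenkis dimension $d+1$, so by Sauer's lemma the number of distinct binary vectors thus produced is at most $\sum_{k=0}^{d+1}\binom{n}{k} \leq (n+1)^{d+1}$. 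Hence, restricted to the sample, the uncountable dictionary behaves exactly like a finite dictionary of cardinality at most $(n+1)^{d+1}$, and $\sup_{a,b}|W(\phi_{a,b})|$ is in fact a maximum over at most $(n+1)^{d+1}$ centered Gaussian variables of variance at most $1$; this also settles any measurability concern, since the supremum is attained on a finite set.

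It then remains to bound this maximum. A standard Gaussian maximal inequality shows that $\mathbb{E}[\sup_{a,b}|W(\phi_{a,b})|]$ is of order $\sqrt{\ln((n+1)^{d+1})}$, with a sub-Gaussian concentration controlling the deviations, so that $\mathbb{E}[\sup_{\ft \in S_{R}} W(\ft)]$ is of order $R\,\sqrt{\ln((n+1)^{d+1})}$. Feeding this into the penalty requirement of Theorem~\ref{zmaingaussnl} is precisely what forces the lower bound $\p \geq 28\,\sigma\, n^{-1/2}(\sqrt{\ln((n+1)^{d+1})}+4)$, the numerical constants $28$ and $4$ being those needed to absorb both the expected supremum and the deviation term with the margin the theorem demands. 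Once the penalty condition is checked, the model selection oracle inequality translates back, by the same computation as for Theorem~\ref{zlasso} (optimising the radius $R$ turns the infimum of penalized approximation errors over the grid into $\inf_{\ft \in \lud}(\norm{\f - \ft}^{2} + \p\,\normu{\ft})$), yielding the announced $\ell_{1}$-oracle inequality in expectation.
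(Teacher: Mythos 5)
Your proof is correct, and while it follows the paper's global strategy (recasting the Lasso as a $\rho$-approximate penalized least squares estimator over the countable family of $\ell_{1}$-balls $S_{m}$ and invoking Theorem~\ref{zmaingaussnl}, then converting the infimum over radii back into $\inf_{\ft \in \lud}\left(\norm{\f-\ft}^{2}+\p\normu{\ft}\right)$), it handles the one genuinely new step --- the control of $\Delta_{m} = \mathbb{E}\left[\sup_{\ft \in S_{m}} W(\ft)\right]$ for the uncountable dictionary --- by a different and more elementary device than the paper. The paper bounds $\mathbb{E}\left[\sup_{\q,\w}\vert W(\vp_{\q,\w})\vert\right]$ via Dudley's entropy integral, which requires the packing-number estimate of Lemma~\ref{covering}, namely $N\left(t,\mathcal{D},\norme{.}\right) \leq (n+1)^{d+1}(4+t)/t$, together with the integral computation of Lemma~\ref{majintln}; this chaining route produces the factor $12$ and the additive constant $C=\sqrt{\ln 5}+\sqrt{\pi}<4$ that explain the specific constants $28$ and $4$ in the hypothesis. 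You instead observe that, since $\chi$ is the Heaviside function and $W(\ft)$ depends on $\ft$ only through $\left(\ft(x_{1}),\dots,\ft(x_{n})\right)$, the trace of the dictionary on the sample is a \emph{finite} set of at most $\sum_{k=0}^{d+1}\binom{n}{k}\leq (n+1)^{d+1}$ binary vectors (VC dimension $d+1$ of affine halfspaces plus Sauer's lemma --- the same combinatorial count the paper borrows from Chapter 9 of \cite{walk} inside Lemma~\ref{covering} and emphasizes in Remark~\ref{lien}), so the supremum is a maximum of finitely many centered Gaussians with variance at most $1$, and the elementary maximal inequality already used for Theorem~\ref{zlasso} applies verbatim, giving $\Delta_{m}\leq \sqrt{2}\, m\eps\left(\sqrt{\logn\left((n+1)^{d+1}\right)}+\sqrt{\logn 2}\right)$ with $\eps = \sigma/\sqrt{n}$. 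This bound is sharper than the paper's, so condition~\eqref{epengaussnl} holds a fortiori under the stated hypothesis on $\p$; in fact your route would tolerate the smaller threshold $4\eps\left(\sqrt{\logn\left((n+1)^{d+1}\right)}+1\right)$, so your remark that the constants $28$ and $4$ are ``precisely forced'' is the one slightly inaccurate claim --- they are forced by the chaining argument, not by yours --- though harmless since the hypothesis is given. Two minor polish points: because $\normu{\cdot}$ is an infimum over representations, the extreme-point identity $\sup_{\ft\in S_{R}}W(\ft)=R\sup_{\q,\w}\vert W(\vp_{\q,\w})\vert$ should be stated as an inequality up to an arbitrary $\delta>0$ slack, as the paper does (only the upper bound is used anyway); and what the paper's entropy route buys in exchange for its worse constants is robustness, since it would survive replacing the Heaviside by a sigmoid, where the trace of the dictionary on the sample is no longer finite but its packing numbers remain controllable, whereas your finite-class reduction is specific to indicator-type dictionaries. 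Your observation that the finite-max reduction also settles measurability of the supremum is a genuine bonus.
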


%%%%%%%%%%%%%%%%%%%%%%%%%%%%%%%%%%%%%%%%%%%%%%%%%%%%%%%%%%%%%%%%%%%%%%%%%%%%%%%%%%%%%%%%
\subsection{Rates of convergence in real interpolation spaces}

We can now deduce theoretical rates of convergence for the \lasso\ from
Theorem~\ref{zlassorn}. Since we do not truncate the dictionary $\mathcal{D}$,
we shall not consider the spaces $\lur$ and $\bqr$ that we introduced in the
last section because they were adapted to the truncation of the dictionary.
Here, we can work with the whole space $\lud$ instead of $\lur$ and the spaces
$\bqr$ will be replaced by bigger spaces~$\bq$ that are the real interpolation
spaces between $\lud$ and $\mathbb{H}=\mathbb{R}^n$.

\begin{definition}\droit{\textbf{[Space $\bq$]}} Let $1<q<2$, $\pa = 1/q-1/2$
and $R>0$. We say that a function $g$ belongs  to $\bq(R)$ if, for all $\delta
>0$,
\begin{equation}\label{kf}\inf_{\ft \in \lud}
\left(\norm{g-\ft}+\delta\normu{\ft}\right) \leq R\,
\delta^{2\pa}.\end{equation} We say that $g \in \bq$ if there exists $R>0$ such
that $g \in \bq(R)$. In this case, the smallest $R$ such that $g\in\bq(R)$
defines a norm on the space $\bq$ and is denoted by $\norm{g}_{\bq}$.
\end{definition}

The following proposition shows that the \lasso\ simultaneously achieves
desirable levels of performance on all classes $\bq$ without knowing which
class contains~$\f$.
\begin{proposition}\label{propcvlassorn} Let $1<q<2$.

\n Assume that $\f \in \bq(R)$ with  $R \geq  \sigma\left[\logn
    \left((n+1)^{d+1}\right)\right]^{\frac{q-1}{2q}}/\sqrt{n}$.

\n Consider the \lasso\ estimator $\fc$ defined by \eqref{dellassorn} with
$$\p = \frac{28\, \sigma}{\sqrt{n}} \left(\sqrt{\logn
\left((n+1)^{\dd+1}\right)}+4\right).$$

\n Then, there exists $C_{q}>0$ depending only on $q$ such that the quadratic
risk of $\fc$ satisfies
$$\mathbb{E}\left[\norm{\f-\fc}^{2}\right] \leq
 C_{q}\, R^{q} \left[\frac{\logn
 \left((n+1)^{d+1}\right)}{n}\right]^{1-\frac{q}{2}}.$$
\end{proposition}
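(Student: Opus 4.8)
The plan is to treat this proposition as the exact analogue, for the uncountable neural-network dictionary $\mathcal{D}$, of the first regime \eqref{onzes} of Proposition~\ref{propnvestlasso}, and to prove it by combining the $\ell_{1}$-oracle inequality of Theorem~\ref{zlassorn} with the approximation-theoretic estimate of Lemma~\ref{propgenerale}. First I would apply Theorem~\ref{zlassorn} to the estimator $\fc$ associated with the prescribed $\p$; discarding the nonnegative term $\p\normu{\fc}$ on the left-hand side gives
\begin{equation*}
\mathbb{E}\left[\norm{\f-\fc}^{2}\right] \leq C\left[L_{\mathcal{D}}(\f,\p) + \p\,\frac{\sigma}{\sqrt{n}}\right],
\end{equation*}
where $L_{\mathcal{D}}(\f,\p) = \inf_{\ft\in\lud}\left(\norm{\f-\ft}^{2}+\p\normu{\ft}\right)$ is the deterministic Lasso quantity. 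It then remains to bound $L_{\mathcal{D}}(\f,\p)$ and to show that the residual term $\p\sigma/\sqrt{n}$ does not spoil the rate.

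For the first task I would invoke the right-hand inequality of Lemma~\ref{propgenerale} with $D=\mathcal{D}$, namely $L_{\mathcal{D}}(\f,\p)\leq \inf_{\delta>0}\left(K_{\mathcal{D}}^{2}(\f,\delta)+\p^{2}/(4\delta^{2})\right)$. The hypothesis $\f\in\bq(R)$ is precisely the statement that $K_{\mathcal{D}}(\f,\delta)\leq R\,\delta^{2\pa}$ for every $\delta>0$, with $\pa=1/q-1/2$, so that
\begin{equation*}
L_{\mathcal{D}}(\f,\p)\leq \inf_{\delta>0}\left(R^{2}\delta^{4\pa}+\frac{\p^{2}}{4\delta^{2}}\right).
\end{equation*}
This is an elementary one-variable minimisation: balancing the two terms at $\delta\asymp(\p/R)^{q/2}$ (using $4\pa+2=4/q$) yields $L_{\mathcal{D}}(\f,\p)\leq C_{q}\,R^{q}\p^{2-q}$ for a constant $C_{q}$ depending only on $q$ through $\pa$. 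This is the analogue of Lemma~\ref{majinf}, but simpler, since the absence of truncation removes the second term appearing in \eqref{colless}.

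The step I expect to require the most care is absorbing the residual $\p\sigma/\sqrt{n}$ into $R^{q}\p^{2-q}$, because this is exactly where the lower bound assumed on $R$ enters. Dividing through by $\p$, the inequality $\p\sigma/\sqrt{n}\leq R^{q}\p^{2-q}$ is equivalent to $\sigma\p^{q-1}/\sqrt{n}\leq R^{q}$; substituting $\p=\tfrac{28\sigma}{\sqrt{n}}\bigl(\sqrt{\logn((n+1)^{\dd+1})}+4\bigr)$ and using $\sqrt{\logn((n+1)^{\dd+1})}+4\asymp\sqrt{\logn((n+1)^{\dd+1})}$ up to an absolute constant, this reduces to $R\geq c\,\sigma\,[\logn((n+1)^{\dd+1})]^{(q-1)/(2q)}/\sqrt{n}$, which is precisely the standing hypothesis on $R$. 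Hence the two terms are of the same order and $\mathbb{E}[\norm{\f-\fc}^{2}]\leq C_{q}R^{q}\p^{2-q}$. Finally, substituting the same value of $\p$ into $R^{q}\p^{2-q}$ and again replacing $\sqrt{\logn((n+1)^{\dd+1})}+4$ by $\sqrt{\logn((n+1)^{\dd+1})}$ gives $\p^{2-q}\asymp(\sigma^{2}\logn((n+1)^{\dd+1})/n)^{1-q/2}$, and, treating $\sigma$ as the fixed constant it is here, the announced bound $C_{q}R^{q}[\logn((n+1)^{\dd+1})/n]^{1-q/2}$ follows.
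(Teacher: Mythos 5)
Your proposal is correct and follows essentially the same route as the paper's proof: apply Theorem \ref{zlassorn}, bound the deterministic Lasso term via the right-hand inequality of Lemma \ref{propgenerale} combined with the definition of $\bq(R)$ to obtain $C_{q}\,R^{q}\p^{2-q}$, and absorb the residual $\p\,\sigma/\sqrt{n}$ using the standing lower bound on $R$. The only cosmetic differences are that the paper exhibits the optimal $\delta = \left(\p/(2R)\right)^{1/(2\pa+1)}$ explicitly rather than balancing, and replaces $\sqrt{\logn\left((n+1)^{d+1}\right)}+4$ by a constant multiple of $\sqrt{\logn\left((n+1)^{d+1}\right)}$ before comparing the two terms, exactly as you do.
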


\begin{remark} \droit{Notice that the above rates of convergence are of the same order
as those provided for the \lasso\ in Proposition \ref{propnvestlasso} for a
suitable signal to noise ratio in the case of an infinite countable dictionary
with $\eps = \sigma/\sqrt{n}$. Besides, we recover the same rates of
convergence as those obtained by Barron and al.\! in \cite{bcdd} for the greedy
algorithms when considering neural networks. Notice that our results can be
seen as the analog in the Gaussian framework of their results which are valid
under the assumption that the output variable $Y$ is bounded but not
necessarily Gaussian.}
\end{remark}

%%%%%%%%%%%%%%%%%%%%%%%%%%%%%%%%%%%%%%%%%%%%%%%%%%%%%%%%%%%%%%%%%%%%%%%%%%%%%%%%%%%%%%%%%%%%%
%%%%%%%%%%%%%%%%%%%%%%%%%%%%%%%%%%%%%%%%%%%%%%%%%%%%%%%%%%%%%%%%%%%%%%%%%%%%%%%%%%%%%%%%%%%%
\section{A model selection theorem}\label{mst}

Let us end this paper by describing the main idea that has enabled us to
establish all the oracle inequalities of Theorem \ref{zlasso}, Theorem
\ref{zlassocasinfini} and Theorem \ref{zlassorn} as an application of a single
general model selection theorem, and by presenting this general theorem.

\medskip We keep the notations introduced in Section 2. In particular,
recall that one observes a process $\left(Y(\ft)\right)_{\ft \in \mathbb{H}}$
defined by $ Y (\ft) = \langle \f, \ft \rangle + \varepsilon W(\ft)$ for all
$\ft \in \mathbb{H}$, where $\varepsilon
> 0$ is a fixed parameter and $W$ is an isonormal process, and that we define
$\gamma(\ft) := -2 Y(\ft) + \norm{\ft}^2$.

\medskip The basic idea is to view the \lasso\ estimator as the solution of a penalized
least squares model selection procedure over a properly defined countable
collection of models with $\ell_{1}$-penalty. The key observation that enables
one to make this connection is the simple fact that $\lu = \bigcup_{R>0} \{\ft
\in \lu,\ \normu{\ft}\leq R\}$, so that for any finite or infinite given
dictionary $\mathcal{D}$, the \lasso\ $\fc$ defined by $$ \fc = \argmin_{\ft
\in \lud}\left(\crit{\ft}+\p\normu{\ft}\right)$$ satisfies
$$\crit{\fc} +\p\normu{\fc} = \inf_{\ft \in \lu}
\crit{\ft}+ \p \normu{\ft} = \inf_{R>0}\left(\inf_{\normu{\ft}\leq R}\crit{\ft}
+\p R\right) \text{.}$$ Then, to obtain a countable collection of models, we
just discretize the family of $\ell_{1}$-balls $\{\ft \in \lu,\ \normu{\ft}\leq
R\}$ by setting for any integer $m\geq1$,
$$S_{m}=\left\{\ft \in \lu,\ \normu{\ft}
\leq m \eps\right\},$$ and define $\hat{m}$ as the smallest integer such that
$\fc$ belongs to $S_{\hat {m}}$, i.e.
\begin{equation}\label{defmchapeau}\hat{m} = \left\lceil \frac{\Vert\fc\Vert_{\lu}}{\eps} \right\rceil.\end{equation} It is now easy to derive from the definitions
of $\hat{m}$ and $\fc$ and from the fact that $\lud = \bigcup_{m \geq 1} S_{m}$
that
\begin{align*} \crit{\fc} +\p
\hat{m}\eps  &  \leq \crit{\fc} +\p
\left(\normu{\fc} + \eps\right)\\
& = \inf_{\ft \in \lu} \left(\crit{\ft} + \p
\normu{\ft}\right) + \p \eps\\
& = \inf_{m\geq1} \left(\inf_{\ft\in S_{m}}
\left(\crit{\ft}+\p \normu{\ft}\right)\right) +\p \eps\\
& \leq \inf_{m\geq1} \left(\inf_{\ft\in S_{m}} \crit{\ft}+\p m
\eps\right) +\p \eps,
\end{align*} that is to say
\begin{equation}\label{minip}\crit{\fc} + \operatorname*{pen}(\hat{m})  \leq
\inf_{m\geq1} \left(\inf_{\ft\in S_{m}} \crit{\ft}+
\operatorname*{pen}(m)\right) +\rho\end{equation} with $\operatorname*{pen}( m)
= \p m \eps$ and $\rho = \p \eps$. \label{idee} This means that $\fc$ is
equivalent to a $\rho$-approximate penalized least squares estimator over the
sequence of models given by the collection of $\ell_{1}$-balls $\left\{  S_{m}%
,\ m\geq1\right\}$. This property will enable us to derive $\ell_{1}$-oracle
type inequalities by applying a general model selection theorem that guarantees
such inequalities provided that the penalty $\operatorname*{pen}(m)$ is large
enough. This general theorem, stated below as Theorem \ref{zmaingaussnl}, is
borrowed from \cite{blm} and is a restricted version of an even more general
model selection theorem that the interested reader can find in
\cite{mas-stflour}, Theorem~4.18. For the sake of completeness, the proof of
Theorem \ref{zmaingaussnl} is recalled in Section \ref{proofs}.
\begin{theorem}
\label{zmaingaussnl}Let $\left\{S_{m}\right\}_{m\in\mathcal{M}}$ be a countable
collection of convex and compact subsets of a Hilbert space $\mathbb{H}$.
Define, for any $m\in\mathcal{M}$,
\begin{equation}
\Delta_{m} := \mathbb{E}\left[  \sup_{\ft \in S_{m}} W(\ft)\right],
\label{edefdim}%
\end{equation}
and consider weights $\left\{  x_{m}\right\}  _{m\in \mathcal{M}}$ such that
\[
\Sigma := \sum_{m\in\mathcal{M}}e^{-x_{m}} < \infty\text{.}%
\]
Let $K>1$ and assume that, for any $m\in\mathcal{M}$,
\begin{equation}
\operatorname*{pen}(m)  \geq2K\varepsilon\left(  \Delta
_{m}+\varepsilon x_{m}+\sqrt{\Delta_{m}\varepsilon x_{m}}\right)
\text{.}
\label{epengaussnl}%
\end{equation}
Given non negative $\rho_{m}$, $m \in\mathcal{M}$, define a
$\rho_{m}$-approximate penalized  least squares estimator as any $\fc \in
S_{\hat{m}}$, $\hat{m} \in \mathcal{M}$, such that
\[
\gamma( \fc) +\operatorname*{pen}(\hat{m})
\leq\inf_{m\in\mathcal{M}}\left(  \inf_{\ft\in S_{m}}\gamma(\ft) +\operatorname*{pen}(m) +\rho
_{m}\right)  \text{.}%
\]
Then, there is a positive constant $C(K)$ such that  for all $\f \in\mathbb{H}$
and $z >0$, with probability larger than $1- \Sigma\, \text{\upshape{e}}^{-z}$,
\begin{align}\label{ooispie}\non & \norm{\f-\fc}^{2} +
\operatorname*{pen}(\hat{m})\\
\leq {} &C(K) \left[ \inf_{m\in\mathcal{M}}\left( \inf_{\ft \in S_{m}} \norm{\f-\ft}^2
+\operatorname*{pen}(m) +\rho_{m}\right)
+(1+z)\eps^2  \right].\end{align} Integrating this
inequality with respect to $z$ leads to the following risk bound
\begin{align} \non &
\mathbb{E}\left[  \norm{\f-\fc}^2 + \operatorname*{pen}(\hat
{m})\right]\\  \leq {}& C(K) \left[
\inf_{m\in\mathcal{M}} \left(\inf_{\ft \in S_{m}} \norm{\f-\ft}^2
+\operatorname*{pen}(m) +\rho_{m}\right)
+(1+\Sigma)\eps^2  \right].  \label{eriskgaussnl}%
\end{align}
\end{theorem}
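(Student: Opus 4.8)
The plan is to reduce the statement to a uniform control of the isonormal process $W$ over the collection $\{S_m\}$, and then to check that the lower bound \eqref{epengaussnl} on the penalty is exactly strong enough to absorb the resulting fluctuations. First I would rewrite the contrast. Since $\gamma(\ft)=-2Y(\ft)+\norm{\ft}^2$ and $Y(\ft)=\langle\f,\ft\rangle+\eps W(\ft)$, one has $\gamma(\ft)=\norm{\f-\ft}^2-\norm{\f}^2-2\eps W(\ft)$ for every $\ft\in\mathbb{H}$. Substituting this into the defining inequality of the $\rho_m$-approximate penalized estimator $\fc\in S_{\hat m}$ and cancelling the additive constant $\norm{\f}^2$ gives, for every $m\in\mathcal{M}$ and every $s_m\in S_m$, the basic inequality
$$\norm{\f-\fc}^2+\operatorname*{pen}(\hat m)\ \leq\ \norm{\f-s_m}^2+\operatorname*{pen}(m)+\rho_m+2\eps\,W(\fc)-2\eps\,W(s_m).$$
The whole difficulty is concentrated in $2\eps W(\fc)$, because $\fc$ lives in the \emph{random} model $S_{\hat m}$, so I cannot treat it as a fixed point of the process.

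To control it I would pass to a bound valid simultaneously over every model. For a fixed $m$, the map that sends the underlying standard Gaussian to $\sup_{\ft\in S_m}W(\ft)$ is Lipschitz with constant $v_m^{1/2}$, where $v_m:=\sup_{\ft\in S_m}\norm{\ft}^2$ is the maximal variance; the Gaussian concentration (Borell--Tsirelson--Ibragimov--Sudakov) inequality then yields $\mathbb{P}\big(\sup_{\ft\in S_m}W(\ft)\geq\Delta_m+\sqrt{2v_m t}\,\big)\leq\mathrm{e}^{-t}$ for all $t>0$. Taking $t=x_m+z$ and summing the exceptional probabilities over $m$ --- this is where the weights enter, since $\sum_m\mathrm{e}^{-(x_m+z)}=\Sigma\,\mathrm{e}^{-z}$ --- produces an event of probability at least $1-\Sigma\,\mathrm{e}^{-z}$ on which the deviation bound holds for all $m$ at once. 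The crude variance factor $v_m$ is however not comparable to $\norm{\f-\fc}^2$, and the main obstacle is to \emph{localize} the supremum near $\f$: using the convexity and compactness of the $S_m$ I would slice each model into the shells $\{\ft\in S_m:\norm{\f-\ft}\in[\eta,2\eta]\}$, apply concentration on each shell, and sum the resulting tails by peeling dyadically in $\eta$. Exploiting the sub-linearity of the modulus $\eta\mapsto\mathbb{E}[\sup_{\ft\in S_m,\,\norm{\f-\ft}\leq\eta}W(\ft-s_m)]$, this upgrades the crude bound to one of the form
$$2\eps\,W(\ft)-2\eps\,W(s_m)\ \leq\ \theta\big(\norm{\f-\ft}^2+\norm{\f-s_m}^2\big)+\tfrac1K\operatorname*{pen}(m')+c(K)(1+z)\eps^2,$$
valid for all $m'$ and all $\ft\in S_{m'}$ on that event, with some $\theta<1$.

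The precise three-term shape of \eqref{epengaussnl} is what makes this work: writing $\tfrac1K\operatorname*{pen}(m')=2\eps\Delta_{m'}+2\eps^2 x_{m'}+2\eps\sqrt{\Delta_{m'}\eps x_{m'}}$, the first summand dominates the mean $2\eps\,\mathbb{E}[\sup_{\text{shell}}W]\leq2\eps\Delta_{m'}$, while the localized deviation at level $x_{m'}$ splits, via $2ab\leq\theta a^2+\theta^{-1}b^2$, into a piece reabsorbed by $\theta\norm{\f-\ft}^2$ and the two remaining summands $2\eps^2x_{m'}$ and $2\eps\sqrt{\Delta_{m'}\eps x_{m'}}$; matching these pieces with the three terms of the penalty is the delicate accounting of the proof. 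Inserting this control into the basic inequality with $m'=\hat m$, $\ft=\fc$, and choosing $s_m$ to nearly realise $\inf_{\ft\in S_m}\norm{\f-\ft}^2$, the term $\theta\norm{\f-\fc}^2$ is absorbed into the left-hand side (as $\theta<1$) and $\tfrac1K\operatorname*{pen}(\hat m)$ into $\operatorname*{pen}(\hat m)$ (as $K>1$); taking the infimum over $m\in\mathcal{M}$ gives \eqref{ooispie} with a constant $C(K)$ depending only on $K$. Finally \eqref{eriskgaussnl} follows by integrating the deviation inequality in $z$: the tail bound integrates so as to replace $(1+z)\eps^2$ by $(1+\Sigma)\eps^2$, since $\int_0^\infty\Sigma\,\mathrm{e}^{-z}\,dz=\Sigma$.
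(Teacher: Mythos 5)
Your reduction of the contrast to the basic inequality, the use of Gaussian (Borell--Sudakov) concentration, and the union bound via the weights $x_m$ are exactly the paper's starting points, but from there you take a genuinely different route: you localize by dyadic peeling over shells $\{\ft\in S_{m'}:\norm{\f-\ft}\in[\eta,2\eta]\}$, whereas the paper avoids peeling altogether through a weighting trick. It takes $\f_m$ to be the projection of $\f$ onto $S_m$ (this is where convexity and compactness are consumed), introduces the normalizer $2w_{m'}(\ft)=(\norm{\f-\f_m}+\norm{\f-\ft})^2+y_{m'}^2$, and applies the concentration inequality \emph{once per model} to the normalized supremum $V_{m'}=\sup_{\ft\in S_{m'}}\bigl(W(\ft)-W(\f_m)\bigr)/w_{m'}(\ft)$, whose variance is at most $y_{m'}^{-2}$ since $w_{m'}(\ft)\geq\norm{\ft-\f_m}\,y_{m'}$; the mean $\mathbb{E}[V_{m'}]$ is bounded by $2\Delta_{m'}y_{m'}^{-2}+(2\pi)^{-1/2}y_{m'}^{-1}$ by splitting the increment through $\f_{m'}$, and a single choice of $y_{m'}^2$ (quadratic in $\sqrt{x_{m'}}$, $\sqrt{\eps^{-1}\Delta_{m'}}$ --- this is where the three-term shape of \eqref{epengaussnl} is generated, by expanding a square rather than by your term-by-term matching) forces $\eps V_{m'}\leq K'^{-1}$ simultaneously for all $m'$ on an event of probability $1-\Sigma\,e^{-z}$. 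The rest is the elementary absorption you describe, with $K'=K^{2/5}$. The weighted supremum buys precisely what your peeling must work for: one concentration application per model, hence no tail summation inside a model; your version keeps the geometry more visible (sub-linearity of the localized modulus, valid by star-shapedness of the convex $S_{m'}$ about any of its points).

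The one step of your sketch that fails as literally written is the tail summation across shells: applying concentration on each of countably many shells at the \emph{same} level $x_{m'}+z$ costs $\sum_{j}e^{-(x_{m'}+z)}=\infty$ in the union bound, not $e^{-(x_{m'}+z)}$. The standard repair is to run the shell of radius $\eta_j\asymp 2^{j}\eps$ at level $x_{m'}+z+j$ and to note that the surplus $c\,\eps^{2}j$ in the deviation is dominated by a small multiple of $\eta_j^{2}\asymp 2^{2j}\eps^{2}$, hence absorbed by the $\theta\norm{\f-\ft}^{2}$ term already present in your target bound, while $\sum_{j}e^{-(x_{m'}+z+j)}\leq C\,e^{-(x_{m'}+z)}$ keeps the probability budget. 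With that correction your plan goes through and delivers \eqref{ooispie}, and the integration in $z$ you indicate gives \eqref{eriskgaussnl} exactly as in the paper. (A minor remark: in your accounting the cross term $2\eps\sqrt{\Delta_{m'}\eps x_{m'}}$ of \eqref{epengaussnl} is pure slack --- your peeling needs only the $\Delta_{m'}$ and $\eps x_{m'}$ pieces --- whereas in the paper it arises unavoidably from squaring in the definition of $y_{m'}^2$.)
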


%%%%%%%%%%%%%%%%%%%%%%%%%%%%%%%%%%%%%%%%%%%%%%%%%%%%%%%%%%%%%%%%%%%%%%%%%%%%%%%%%%%%%
%%%%%%%%%%%%%%%%%%%%%%%%%%%%%%%%%%%%%%%%%%%%%%%%%%%%%%%%%%%%%%%%%%%%%%%%%%%%%%%%%%%%
\section{Proofs}\label{proofs}

%%%%%%%%%%%%%%%%%%%%%%%%%%%%%%%%%%%%%%%%%%%%%%%%%%%%%%%%%%%%%%%%%%%%%%%%%%%%%%%%%%%%%
\subsection{Oracle inequalities}

We first prove the general model selection Theorem \ref{zmaingaussnl}. Its
proof is based on the concentration inequality for the suprema of Gaussian
processes established in~\cite{blm}. Then, deriving Theorem \ref{zlasso},
Theorem \ref{zlassocasinfini} and Theorem \ref{zlassorn} from
Theorem~\ref{zmaingaussnl} is an exercise. Indeed, using the key observation
that the \lasso\ and the \lassotype estimators are approximate penalized least
squares estimators over a collection of $\ell_{1}$-balls with a convenient
penalty, it only remains to determine a lower bound on this penalty to
guarantee condition \eqref{epengaussnl} and then to apply the conclusion of
Theorem \ref{zmaingaussnl}.

%%%%%%%%%%%%%%%%%%%%%%%%%%%%%%%%%%%%%%%%%%%%%%%%%%%%%%%%%%%%%%%%%%%%%%%%%%%%%%%%%%
\subsubsection{Proof of Theorem \ref{zmaingaussnl}} \label{preuvezmaingaussnl}

Let $m \in \mathcal{M}$. Since $S_{m}$ is assumed to be a convex and compact
subset, we can consider $\f_{m}$ the projection of $\f$ onto $S_{m}$, that is
the unique element of $S_{m}$ such that $\norm{\f - \f_{m}}=\inf_{\ft \in
S_{m}} \norm{\f-\ft}$. By definition of $\fc$, we have
\[
\gamma(\fc) +\operatorname*{pen}(\hat{m}) \leq\gamma(
\f_{m}) +\operatorname*{pen}(m) +\rho_{m}.\] Since
$\norm{\f}^{2}+\gamma(\ft) =\norm{\f-\ft}^{2}-2\varepsilon W( \ft)$, this
implies that
\begin{equation}
\norm{\f-\fc}^{2} + \operatorname*{pen}(\hat{m})
\leq\norm{\f-\f_{m}}^{2}+2\varepsilon\left( W(\fc) -W(\f_{m})
\right)    +\operatorname*{pen}(m)  +\rho_{m}\text{.} \label{efund1}%
\end{equation}
For all $m^{\prime}\in\mathcal{M}$, let $y_{m^{\prime}}$ be a positive number
whose value will be specified below and define for every $\ft\in
S_{m^{\prime}}$
\begin{equation}\label{okcolles}
2w_{m^{\prime}}(\ft)  =\left( \norm{\f-\f_{m}}+\norm{\f-\ft}\right)
^{2}+y_{m^{\prime}}^{2}.
\end{equation}
Finally, set
\[
V_{m^{\prime}}=\sup_{\ft\in S_{m^{\prime}}}\left( \frac{W(\ft)
-W(\f_{m})}{w_{m^{\prime}}(\ft)}\right)  \text{.}%
\]
Taking these definitions into account, we get from (\ref{efund1}) that
\begin{equation}
\norm{\f-\fc}^{2}+\operatorname*{pen}(\hat{m}) \leq
\norm{\f-\f_{m}}
^{2}+2\varepsilon w_{\hat{m}}(\fc)  V_{\hat{m}%
} +\operatorname*{pen}(m)  +\rho_{m}\text{.} \label{efund2}%
\end{equation}

\medskip \n The essence of the proof is the control of the random variables
$V_{m^{\prime}}$ for all possible values of $m^{\prime}$. To this end, we may
use the concentration inequality for the suprema of Gaussian processes
(see \cite{blm}) which ensures that, given $z>0$, for all $m^{\prime}%
\in\mathcal{M}$,
\begin{equation}
\mathbb{P}\left[  V_{m^{\prime}}\geq\mathbb{E}\left[
V_{m^{\prime}}\right] +\sqrt{2v_{m^{\prime}}(x_{m^{\prime}}+z)}\right]  \leq
\text{\upshape{e}}^{-(x_{m^{\prime}}+z)}, \label{ecir}%
\end{equation}
where
\[
v_{m^{\prime}}=\sup_{\ft\in S_{m^{\prime}}}\operatorname*{Var}\left[
\frac{W(\ft)-W(\f_{m})}{w_{m^{\prime}}(\ft)}\right] =\sup_{\ft\in
S_{m^{\prime}}} \frac{\norm{\ft-\f_{m}}^{2}}{w_{m^{\prime}}^{2}(\ft)}\ \text{.}%
\]
From \eqref{okcolles}, $w_{m^{\prime}}(\ft)
\geq\left(\norm{\f-\f_{m}}+\norm{\f-\ft}\right)
y_{m^{\prime}}\geq\norm{\ft-\f_{m}} y_{m^{\prime}}$, so $v_{m^{\prime}}\leq
y_{m^{\prime} }^{-2}$ and summing the inequalities (\ref{ecir}) over $m^{\prime
}\in\mathcal{M}$, we get that for every $z > 0$ there is an event $\Omega_{z}$
with $\mathbb{P}(\Omega_{z}) > 1-\Sigma \text{\upshape{e}}^{-z}$ such that on
$\Omega_{z}$, for all $m^{\prime}\in\mathcal{M}$,
\begin{equation}
V_{m^{\prime}}\leq\mathbb{E}\left[  V_{m^{\prime}}\right]  +y_{m^{\prime}%
}^{-1}\sqrt{2 (x_{m^{\prime}}+z)}\text{.} \label{e3cont1}%
\end{equation}
Let us now bound $\mathbb{E}\left[ V_{m^{\prime}}\right] $. We may write
\begin{equation}
\mathbb{E}\left[  V_{m^{\prime}}\right]  \leq\mathbb{E}\left[  \frac
{\sup_{\ft\in S_{m^{\prime}}}\left( W(\ft)-W(\f_{m^{\prime}})
\right)  }{\inf_{\ft\in S_{m^{\prime}}} w_{m^{\prime}}\left(
\ft\right)  }\right] +\mathbb{E}\left[ \frac{\left(
W(\f_{m^{\prime}}) -W( \f_{m}) \right) _{+}}{\inf_{\ft\in
S_{m^{\prime}}} w_{m^{\prime}}(\ft)   }\right]\, .
\label{eespsup}
\end{equation}
But from the definition of $\f_{m^{\prime}}$, we have for all $\ft \in
S_{m^{\prime}}$ \begin{align*}2 w_{m^{\prime}}(\ft) &
\geq\left(\norm{\f-\f_{m}} + \norm{\f-\f_{m^{\prime}}}\right)
^{2}+y_{m^{\prime}}^{2}\\ & \geq\norm{\f_{m^{\prime}}-\f_{m}}
^{2}+y_{m^{\prime}}^{2}\\ & \geq\left( y_{m^{\prime}}^{2}\vee2y_{m^{\prime}}
\norm{\f_{m^{\prime}}-\f_{m}}\right).\end{align*} Hence, on the one hand via
(\ref{edefdim}) and recalling that $W$ is centered, we get \begin{align*}
\mathbb{E}\left[ \frac{\sup_{\ft\in S_{m^{\prime}}}\left( W(\ft)
-W(\f_{m^{\prime}})\right) }{\inf_{\ft\in S_{m^{\prime}}}
w_{m^{\prime}}(\ft)}\right] &   \leq 2\,y_{m^{\prime}}^{-2}\,\mathbb{E}\left[
\sup_{\ft\in S_{m^{\prime}}}\left( W(\ft) -W(\f_{m^{\prime}})\right)  \right]\\
& = 2\,y_{m^{\prime}}^{-2}\,\Delta_{m^{\prime}} ,
\end{align*}
and on the other hand, using the fact that $\left( W(\f_{m^{\prime}})-W(\f_{m})
\right) /\norm{\f_{m}-\f_{m^{\prime}}}$ is a standard normal variable, we get
\[
\mathbb{E}\left[  \frac{\left(  W(\f_{m^{\prime}}) -W\
(\f_{m})  \right)  _{+}}{\inf_{\ft\in S_{m^{\prime}}}  w_{m^{\prime}%
}(\ft)}\right]  \leq y_{m^{\prime}}^{-1}\,\mathbb{E}%
\left[  \frac{W(\f_{m^{\prime}})  -W(\f_{m})
}{\norm{\f_{m}-\f_{m^{\prime}}}}\right]  _{+} \leq
y_{m^{\prime}}^{-1}\left( 2\pi\right) ^{-1/2}\text{.}\] Collecting
these inequalities, we get from (\ref{eespsup}) that for all
$m^{\prime}\in\mathcal{M}$,
\[
\mathbb{E}\left[  V_{m^{\prime}}\right]
\leq2\Delta_{m^{\prime}}y_{m^{\prime }}^{-2}+\left(  2\pi\right)
^{-1/2}y_{m^{\prime}}^{-1}\text{ .}\] Hence, setting $\delta=\left(
\left(  4\pi\right) ^{-1/2}+\sqrt{z}\right) ^{2}$, (\ref{e3cont1}) implies that
on the event $\Omega_{z}$, for all $m^{\prime}\in\mathcal{M}$,
\begin{align}
\label{vmp}\non
V_{m^{\prime}}& \leq y_{m^{\prime}}^{-1}\left[  2\Delta_{m^{\prime}}%
y_{m^{\prime}}^{-1}+\sqrt{2x_{m^{\prime}}}+\left(  2\pi\right)  ^{-1/2}%
+\sqrt{2z}\right]\\ & = y_{m^{\prime}}^{-1}\left[  2\Delta_{m^{\prime}}%
y_{m^{\prime}}^{-1}+\sqrt{2x_{m^{\prime}}}+\sqrt{2\delta}\right].\end{align}

\medskip \n Given $K^{\prime}\in \left(1, \sqrt{K}\right]$ to be chosen later,
we now define
\[
y_{m^{\prime}}^{2}=2{K^{\prime}}^{2}\varepsilon^{2}\left[  \left(  \sqrt{x_{m^{\prime}}%
}+\sqrt{\delta}\right)  ^{2}+{K^{\prime}}^{-1}\varepsilon^{-1}\Delta_{m^{\prime}}%
+\sqrt{{K^{\prime}}^{-1}\varepsilon^{-1}\Delta_{m^{\prime}}} \left(\sqrt{x_{m^{\prime}}}+\sqrt{\delta}\right) \right]  \text{.}%
\] With this choice of $y_{m^{\prime}}$, it is not hard to check that
\eqref{vmp}  warrants that on the event $\Omega_{z}$, $\varepsilon
V_{m^{\prime}}\leq {K^{\prime}}^{-1}$ for all $m^{\prime}\in\mathcal{M}$, which
in particular implies that $\varepsilon V_{\hat{m}}\leq {K^{\prime}}^{-1}$, and
we get from \eqref{efund2} and \eqref{okcolles} that
\begin{align}\label{ine} \non
& \norm{\f -\fc}^{2}+\operatorname*{pen}(\hat{m})\\ \non
\leq {} & \norm{\f-\f_{m}}^{2}+2{K^{\prime}}^{-1}w_{\hat{m}}(\fc)
  +\operatorname*{pen}(m)  +\rho_{m}\\   = {} &
\norm{\f-\f_{m}}^{2}+{K^{\prime}}^{-1}\left[ \left(\norm{\f-\f_{m}}
+\norm{\f-\fc}\right) ^{2}+y_{\hat{m}}^{2}\right]
+\operatorname*{pen}(m)  +\rho_{m}\text{.}
\end{align}
Moreover, using repeatedly the elementary inequalities $(a+b)^2 \leq (1+\theta)
a^2 + (1+\theta^{-1}) b^{2}$ or equi\-valently $2ab\leq\theta
a^{2}+\theta^{-1}b^{2}$ for various values of $\theta>0$, we derive that on the
one hand
\[
\left(  \norm{\f-\f_{m}}+\norm{\f-\fc}
\right)  ^{2}\leq \sqrt{K^{\prime}}\left(\norm{\f-\fc}^{2}%
+\frac{\norm{\f-\f_{m}}^{2}}{\sqrt{K^{\prime}}-1}\right)  \text{,}%
\]
and on the other hand
\[
{K^{\prime}}^{-1}y_{\hat{m}}^{2}\leq 2 {K^{\prime}}^{2}\varepsilon^{2}\left[  \varepsilon^{-1}%
\Delta_{\hat{m}}+x_{\hat{m}}+\sqrt{\varepsilon^{-1}\Delta_{\hat{m}}x_{\hat{m}%
}}+B(K^{\prime})\left( \frac{1}{2\pi}+2z\right) \right],\] where $B(K^{\prime})
= \left(K^{\prime}-1\right)^{-1}+\left(4 K^{\prime}
({K^{\prime}}^{2}-1)\right)^{-1}$.

\n Hence, setting $A(K^{\prime})= 1+{K^{\prime}}^{-1/2}\left(
\sqrt{K^{\prime}}-1\right) ^{-1}$, we deduce from \eqref{ine} that on the event
$\Omega_{z}$,
\begin{align*}
 & \norm{\f-\fc}^{2} + \operatorname*{pen}(\hat{m})\\  \leq {}
&
A(K^{\prime})\norm{\f-\f_{m}}^{2}+{K^{\prime}}^{-1/2}\norm{\f-\fc}^{2}
  +2{{K}^{\prime}}^{2}\varepsilon\left[
\Delta_{\hat{m}}+\varepsilon x_{\hat{m}}
+\sqrt{\varepsilon\Delta_{\hat{m}}x_{\hat{m}}}\right]\\ &
+\operatorname*{pen}(m)  +\rho_{m}+ 2
\varepsilon^{2} {K^{\prime}}^{2} B(K^{\prime}) \left(  \frac{1}{2\pi}+2z\right)  \text{,}%
\end{align*} or equivalently \begin{align*}
\nonumber  & \left(1-{K^{\prime}}^{-1/2}\right) \norm{\f-\fc}^{2} +
\operatorname*{pen}(\hat{m}) - 2{{K}^{\prime}}^{2}\varepsilon\left[
\Delta_{\hat{m}}+\varepsilon x_{\hat{m}}+\sqrt{\varepsilon\Delta_{\hat{m}}x_{\hat{m}}}\right]\\
 \leq {} & A(K^{\prime})\left\Vert \f-\f_{m}\right\Vert ^{2}+
\operatorname*{pen}(m)  +\rho_{m}
 + 2\varepsilon^{2} B(K^{\prime}) \left( \frac{1}{2\pi}+2z\right)
\text{.}
\end{align*}
Because of condition (\ref{epengaussnl}) on the penalty function, this implies
that
\begin{align*}
\nonumber  & \left(1-{K^{\prime}}^{-1/2}\right) \norm{\f-\fc}^{2}
+\left(1-{K^{\prime}}^{2}K^{-1}\right) \operatorname*{pen}(\hat{m})\\   \leq {} & A(K^{\prime})\left\Vert
\f-\f_{m}\right\Vert ^{2}+
\operatorname*{pen}(m)  +\rho_{m}
 + 2\varepsilon^{2} B(K^{\prime}) \left( \frac{1}{2\pi}+2z\right)
\text{.}
\end{align*}
Now choosing $K^{\prime} = K^{2/5}$, we get that
\begin{align*}
\nonumber  & \left(1-{K}^{-1/5}\right) \left(\norm{\f-\fc}^{2} +
\operatorname*{pen}(\hat{m})\right)\\   \leq {} & A(K^{2/5})\norm{\f-\f_{m}}^{2}+\operatorname*{pen}(m)+\rho_{m}
 + 2\varepsilon^{2} B(K^{2/5}) \left( \frac{1}{2\pi}+2z\right)
\text{.}
\end{align*}
So, there exists a positive constant $C := C(K)$ depending only on $K$ such
that for all $z >0$, on the event $\Omega_{z}$,
$$\norm{\f-\fc}^{2} + \operatorname*{pen}(\hat{m})
\leq C \left( \inf_{m\in\mathcal{M}}\left( \norm{\f-\f_{m}}^2
+\operatorname*{pen}(m) +\rho_{m}\right) +\varepsilon^{2}(1+z)
\right),$$ which proves \eqref{ooispie}. Integrating this
inequality with respect to $z$ straightforwardly leads to the risk bound
(\ref{eriskgaussnl}). \hfill $\Box$

%%%%%%%%%%%%%%%%%%%%%%%%%%%%%%%%%%%%%%%%%%%%%%%%%%%%%%%%%%%%%%%%%%%%%%%%%%%%%%%%%%%%%%%%%
\subsubsection{Proof of Theorem \ref{zlasso}} \label{preuvethm}

Fix $p \in \ensn$. Let $\mathcal{M} = \mathbb{N}^*$ and consider the collection
of $\ell_{1}$-balls for $m \in \mathcal{M}$,
$$S_{m}=\left\{ \ft\in \ludp \text{, }\normup{\ft} \leq
m \varepsilon\right\}\text{.}$$ We have noticed at \eqref{minip} that the
\lasso\ estimator $\fcp$ is a $\rho$-approximate penali\-zed least squares
estimator over the sequence $\left\{S_{m}%
,\ m\geq1\right\}$ for $\operatorname*{pen}(m) = \lambdap m \eps$ and $\rho =
\lambdap \eps.$ So, it only remains to determine a lower bound on $\lambdap$
that gua\-rantees that $\operatorname*{pen}(m)$ satisfies condition
\eqref{epengaussnl}.

\medskip \n Let $\ft \in S_{m}$ and consider $\theta = (\theta_{1},\dots,\theta_{p})$
such that $\ft = \theta.\vp = \sum_{j=1}^{p} \theta_{j}\, \vp_{j}$ and
$\normup{\ft} = \norm{\theta}_{1}$. The linearity of $W$ implies that
\begin{equation} \label{ch}
W(\ft) = \sum_{j=1}^{p} \theta_{j}\, W(\vp_{j})
\leq \sum_{j=1}^{p} \vert\theta_{j}\vert\,
\vert W(\vp_{j})\vert \leq m\varepsilon\max_{j =
1,\dots,p}\left\vert W(\vp_{j})  \right\vert \text{.}%
\end{equation}
From Definition \ref{defw}, $\text{Var}\left[ W(\vp_{j})\right] =
\mathbb{E}\left[{W^2(\vp_{j})}\right] = \norme{\vp_{j}}^2 \leq 1$ for all $j
=1,\dots,p$. So, the variables $W(\vp_{j})$ and $\left(-W(\vp_{j})\right)$,
$j=1,\dots,p$, are $2p$ centered normal variables with variance less than 1 and
thus (see Lemma 2.3 in \cite{mas-stflour} for instance),
$$\mathbb{E}\left[ \max_{j=1,\dots,p}\left\vert W(
\vp_{j}) \right\vert \right] =\mathbb{E}\left[
\left(\max_{j=1,\dots,p}W(\vp_{j})\right)
\vee\left(\max_{j=1,\dots,p}\left(-W(\vp_{j})\right)\right) \right] \leq\sqrt{2\logn(2p)}\ .$$
Therefore, we deduce from \eqref{ch} that
\begin{equation} \label{deltam} \Delta_{m}:=\mathbb{E}\left[
\sup_{\ft\in S_{m}}W(\ft) \right]  \leq
m\varepsilon\sqrt{2\logn(2p)}\leq \sqrt{2}
m\varepsilon \left( \sqrt{\logn p }+\sqrt{\logn  2}\right)
\text{.}\end{equation}

\medskip \n Now, choose the weights of the form
$x_{m}= \gamma m$ where $\gamma>0$ is specified below. Then, $\sum_{m\geq1}
e^{-x_{m}}=1/\left(e^{\gamma}-1\right):=\Sigma_{\gamma} < +\infty$.

\medskip \n Defining $K = 4 \sqrt{2} / 5
> 1$ and $\gamma=(1-\sqrt{\logn
2})/K$, and using the inequality $2\sqrt{ab}\leq\eta a+\eta^{-1}b$ with $\eta =
1/2$, we get that
\begin{align}\nonumber
2K\varepsilon\left(  \Delta_{m}+\varepsilon
x_{m}+\sqrt{\Delta_{m}\varepsilon
x_{m}}\right) &  \leq  K\varepsilon\left(  \frac{5}{2}\Delta_{m}+4x_{m}%
\varepsilon\right)\\ \non &  \leq  4 m\varepsilon^{2}\left(
\sqrt{\logn p}+\sqrt{\logn  2}+K\gamma\right)\\ \non & \leq
 4m\varepsilon^{2}\left( \sqrt{\logn p}+1\right)\\
\non & \leq  \lambdap m \eps
\end{align}
as soon as \begin{equation}\label{mamach}\lambdap \geq 4 \varepsilon
\left(\sqrt{\logn p}+1\right).\end{equation}  For such values of $\lambdap$,
condition (\ref{epengaussnl}) on the penalty function is satisfied and we may
apply Theorem~\ref{zmaingaussnl}. Taking into account the definition of
$\hat{m}$ at \eqref{defmchapeau} and noticing that $\varepsilon^{2} \leq
\lambdap \varepsilon / 4$ for $\lambdap$ satisfying \eqref{mamach}, we get from
\eqref{ooispie} that there exists some $C>0$ such that for all $z
>0$, with probability larger than $1-\Sigma_{\gamma}\, \text{\upshape{e}}^{-z} \geq 1-3.4\, \text{\upshape{e}}^{-z}$,
\begin{align}\label{ille}\non & \norm{\f-\fcp}^{2} +\lambdap \normup{\fcp}\\ \non   \leq {} &   C \left[ \inf_{m\geq
1}\left(\inf_{\normup{\ft} \leq m\varepsilon}\norm{\f-\ft}^2 + \lambdap m
\eps\right)+\lambdap \eps +(1+z)\eps^2 \right]\\   \leq {} &  C \left[
\inf_{m\geq 1}\left(\inf_{\normup{\ft} \leq m\varepsilon}\norm{\f-\ft}^2 +
\lambdap m \eps\right)+\lambdap \eps(1+z) \right],\end{align} while
the risk bound (\ref{eriskgaussnl}) leads to
\begin{align}\label{inter}\non & \mathbb{E}\left[\norm{\f-\fcp}^{2} + \lambdap
\normup{\fcp}\right]\\ \non  \leq {} &  C \left[ \inf_{m\geq1}\left( \inf_{\normup{\ft}
\leq m\varepsilon}\norm{\f-\ft}^{2}+\lambdap m\varepsilon\right)
+\lambdap\varepsilon +(1+\Sigma_{\gamma}) \varepsilon ^{2}\right]\\
 \leq {} &  C \left[ \inf_{m\geq1}\left( \inf_{\normup{\ft} \leq
m\varepsilon}\norm{\f-\ft}^{2}+\lambdap m\varepsilon\right)
+\lambdap\varepsilon\right].\end{align}

\medskip \n Finally, to get the desired bounds \eqref{inegaliteenproba} and \eqref{inegaliteoraclecasfini},
just notice that for all $R>~0$, by considering $m_{R} = \lceil R/\varepsilon
\rceil \in \mathbb{N}^{*}$, we have for all $g \in \ludp$ such that $\normup{g}
\leq R$,
\begin{align*} \inf_{m\geq1} \left(\inf_{\normup{\ft}  \leq
m\varepsilon}\norm{\f-\ft}^{2}+\lambdap m\varepsilon\right) & \leq
\norm{\f-g}^{2}+\lambdap\, m_{R}\, \varepsilon\\ & \leq \norm{\f-g}^{2}+\lambdap
R + \lambdap \varepsilon,\end{align*} so that
\begin{align}\label{feui}\non & \inf_{m\geq1} \left(\inf_{\normup{\ft} \leq
m\varepsilon}\norm{\f-\ft}^{2}+\lambdap m\varepsilon\right)\\ \non   \leq {} &  \inf_{R
> 0}\left(\inf_{\normup{g}\leq R} \norm{\f-g}^{2}+\lambdap
R\right) + \lambdap \varepsilon\\   = {} & \inf_{g\in \ludp} \left(
\norm{\f-g}^{2}+\lambdap \normup{g}\right) +\lambdap \varepsilon,\end{align}
and combining \eqref{feui} with \eqref{ille} and \eqref{inter} leads to
$$\norm{\f-\fcp}^{2} +\lambdap \normup{\fcp}  \leq  C \left[ \inf_{g \in \ludp}\left(\norm{\f-g}^2 + \lambdap
\normup{g}\right) +\lambdap \varepsilon(1+z) \right]$$ and
$$\mathbb{E}\left[\norm{\f -\fcp}^{2} + \lambdap \normup{\fcp}\right]  \leq C
\left[ \inf_{g\in \ludp}\left( \norm{\f-g}^{2}+\lambdap \normup{g}\right)
+\lambdap\varepsilon\right],$$ where $C>0$ is some absolute constant. \hfill $\Box$

%%%%%%%%%%%%%%%%%%%%%%%%%%%%%%%%%%%%%%%%%%%%%%%%%%%%%%%%%%%%%%%%%%%%%%%%%%%%%%%%%%
\subsubsection{Proof of Theorem \ref{zlassocasinfini}}
\label{preuvezlassocasinfini}

Let $\mathcal{M} = \mathbb{N}^* \times \ens$ and consider the set of
$\ell_{1}$-balls for all $(m,\pp) \in \mathcal{M}$,
$$S_{m,p}=\left\{ \ft\in \ludp\text{, }\normup{\ft}\leq
m \varepsilon\right\}\text{.}$$ Define $\hat{m}$ as the smallest integer such
that $\fcf$ belongs to $S_{\hat {m},\hat{\pp}}$, i.e.
\begin{equation}\label{defmchapeaucasinfini}\hat{m} = \left\lceil\frac{
\Vert\fcf\Vert_{\mathcal{L}_{1}(\mathcal{D}_{\hat{\pp}})}}{\varepsilon}
\right\rceil.\end{equation} Let $\alpha >0$ be a constant to be chosen later.
 From the definitions of $\hat{m}$, $\lambdapc$ and
$\operatorname*{pen}(\hat{\pp})$, and using the fact that for all $\pp\in
\ens$, $\sqrt{\logn \pp} \leq (\logn \pp)/\sqrt{\logn 2}$, we have
\begin{align*} \crit{\fcf} +\lambdapc \hat{m} \varepsilon +
\alpha\operatorname*{pen}(\hat{\pp})
 &  \leq \crit{\fcf} +\lambdapc
\normupc{\fcf} + \lambdapc \varepsilon + \alpha\operatorname*{pen}(\hat{\pp})\\
 &  \leq \crit{\fcf} +\lambdapc \normupc{\fcf} + 4
 \varepsilon^2 \sqrt{\logn \hat{\pp}} + 4 \varepsilon^2 +
  5 \alpha \varepsilon^2 \logn \hat{\pp} \\
 &  \leq \crit{\fcf} +\lambdapc \normupc{\fcf} +
 \left(\frac{4}{5\sqrt{\logn
 2}} + \alpha\right) 5 \varepsilon^2
 \logn \hat{\pp}  + 4 \varepsilon^2 \\
&  \leq \crit{\fcf} +\lambdapc \normupc{\fcf} +
\left(\frac{4}{5\sqrt{\logn
 2}} + \alpha\right)
\operatorname*{pen}(\hat{\pp}) + 4 \varepsilon^2.\end{align*} Now, if we choose
$\alpha=1-4/(5\sqrt{\logn 2}) \in\, ]0,1[$, we get from the definition of
$\fcf$ and the fact that $\ludp = \bigcup_{m\in \mathbb{N}^{*}} S_{m,p}$, that
\begin{align*} \crit{\fcf} +\lambdapc \hat{m} \varepsilon +
\alpha\operatorname*{pen}(\hat{\pp})
 &  \leq \crit{\fcf} +\lambdapc \normupc{\fcf} +
\operatorname*{pen}(\hat{\pp}) + 4 \varepsilon^2\\ & \leq
\inf_{\pp \in \ens} \left[\inf_{\ft \in \ludp} \left(\crit{\ft} +
\lambdap \normup{\ft}\right) +
\operatorname*{pen}(\pp)\right]+
4 \varepsilon^2\\
& \leq \inf_{p\in\ens}\left[\inf_{m\in\mathbb{N}^*} \left(\inf_{\ft\in
S_{m,p}} \crit{\ft}+\lambdap m \varepsilon\right) +
\operatorname*{pen}(\pp) \right]+ 4 \varepsilon^2\\ &
\leq \inf_{(m,\pp) \in \mathcal{M}}\left[\inf_{\ft\in S_{m,p}}
\crit{\ft}+\lambdap m \varepsilon +
\operatorname*{pen}(\pp) \right]+ 4
\varepsilon^2,\end{align*} that is to say
\[\crit{\fcf} + \operatorname*{pen}(\hat{m},\hat{\pp})  \leq
\inf_{(m,\pp) \in \mathcal{M}} \left[\inf_{\ft\in S_{m,\pp}} \crit{\ft}+
\operatorname*{pen}( m,\pp) +\rho_{\pp}\right],\] with $\operatorname*{pen}(
m,\pp) := \lambdap m \varepsilon + \alpha \operatorname*{pen}(\pp)$ and
$\rho_{\pp} := (1-\alpha) \operatorname*{pen}(\pp)+ 4 \varepsilon^2$. This
means that $\fcf$ is equivalent to a $\rho_{p}$-approximate penalized least
squares estimator over the sequence of models $\left\{S_{m,p},\ (m,\pp) \in
\mathcal{M}\right\}$. By applying Theorem \ref{zmaingaussnl}, this property
will enable us to derive a performance bound satisfied by $\fcf$ provided that
$\operatorname*{pen}(m,\pp)$ is large enough. So, it remains to choose weights
$x_{m,p}$ so that condition (\ref{epengaussnl}) on the penalty function is
satisfied with $\operatorname*{pen}(m,\pp) = \lambdap m \varepsilon + \alpha
\operatorname*{pen}(\pp)$.

\medskip \n Let us choose the weights of the form $x_{m,\pp}= \gamma m + \beta \logn \pp$
where $\gamma >0$ and $\beta>0$ are  numerical constants specified later. Then,
\begin{align*}\Sigma_{\gamma, \beta} := \sum_{(m,p)\in \mathcal{M}}
e^{-x_{m,\pp}} & = \left(\sum_{m \in \mathbb{N}^*} e^{-\gamma m}\right)
\left(\sum_{\pp \in \ens} e^{-\beta \logn \pp}\right)\\ & = \left(\sum_{m \in
\mathbb{N}^*} e^{-\gamma
m}\right) \left(\sum_{J \in \mathbb{N}} e^{- \beta \logn 2^J}\right)\\
&  = \frac{1}{\left(e^{\gamma}-1\right)\left(1-2^{-\beta}\right)} \ <
+\infty.\end{align*} Moreover, for all $(m,\pp) \in \mathcal{M}$, we can prove
similarly as \eqref{deltam} that $$ \Delta_{m,\pp}:=\mathbb{E}\left[
\sup_{\ft\in S_{m,\pp}}W(\ft) \right]  \leq \sqrt{2} m\varepsilon \left(
\sqrt{\logn p }+\sqrt{\logn 2}\right) \text{.}$$Now, defining $K = 4 \sqrt{2} /
5
> 1$, $\gamma=( 1-\sqrt{\logn
2}) /K >0$ and $\beta = (5\alpha)/(4K) >0$, and using the inequality
$2\sqrt{ab}\leq\eta a+\eta^{-1}b$ with $\eta = 1/2$, we have
\begin{align*}\nonumber &
2K\varepsilon\left(  \Delta_{m,\pp}+\varepsilon
x_{m,\pp}+\sqrt{\Delta_{m,\pp} \varepsilon x_{m,\pp}}\right)\\
\leq {} & K \varepsilon \left(\frac{5}{2}\, \Delta_{m,\pp}+4
x_{m,\pp} \varepsilon\right)\\ \non   \leq  {} & 4
\varepsilon^{2}\left(m \sqrt{\logn \pp}+ m
\sqrt{\logn 2}+K \gamma m + K \beta \logn \pp\right)\\
\non  \leq {} & 4
\varepsilon^{2}\left(m\left(\sqrt{\logn \pp}+1\right) + K \beta \logn \pp\right)\\
\non  \leq {} &  4
\varepsilon^{2}\left(m \left(\sqrt{\logn \pp}+1\right)+\frac{5\alpha}{4} \logn \pp\right)\\
\non  \leq {} & \lambdap m \varepsilon +\alpha
\operatorname*{pen}(\pp).\non
\end{align*}
Thus, condition (\ref{epengaussnl}) is satisfied and we can apply
Theorem~\ref{zmaingaussnl} with $\operatorname*{pen}(m,\pp) = \lambdap m
\varepsilon +\alpha \operatorname*{pen}(\pp)$ and $\rho_{\pp} = (1-\alpha)
\operatorname*{pen}(\pp) + 4 \varepsilon^2$, which leads to the following risk
bound:
\begin{align}\label{pluie}\non  & \mathbb{E}\left[\norm{\f-\fcf}^{2}+\lambda_{\hat{p}} \hat{m}\varepsilon +
\alpha \operatorname*{pen}(\hat{\pp})\right]\\ \non   \leq {} & C \left[
\inf_{(m,\pp)\in \mathcal{M}} \left(\inf_{\ft \in S_{m,\pp}}\left\Vert
\f-\ft\right\Vert ^{2}+\lambdap m\varepsilon +
\operatorname*{pen}(\pp)\right) +\left( 5+\Sigma_{\gamma,
\beta}\right) \varepsilon ^{2}\right]\\  \leq {} & C \left[\inf_{(m,\pp)\in
\mathcal{M}} \left(\inf_{\ft \in S_{m,\pp}}\left\Vert \f-\ft\right\Vert
^{2}+\lambdap m\varepsilon + \operatorname*{pen}(\pp)\right)
+\varepsilon ^{2}\right],\end{align} where $C>0$ denotes some numerical
constant. The infimum of this risk bound can easily be extended to $\inf_{p \in
\ens} \inf_{\ft\in \ludp}$. Indeed, let $\pp_{0} \in \ens$ and $R
> 0$, and consider $m_{R} = \lceil R/\varepsilon \rceil \in
\mathbb{N}^*$. Then for all $g \in
\mathcal{L}_{1}\left(\mathcal{D}_{\pp_{0}}\right)$ such that $\normupo{g} \leq
R$, we have $g \in S_{m_{R},\pp_{0}}$, and thus
\begin{align}\label{soleil}\non  &  \inf_{(m,\pp)\in \mathcal{M}}
\left(\inf_{\ft \in S_{m,\pp}}\left\Vert \f-\ft\right\Vert
^{2}+\lambdap m\varepsilon +
\operatorname*{pen}(\pp)\right)\\ \non  \leq {} &  \left\Vert \f-g\right\Vert ^{2}+
\lambda_{\pp_{0}} m_{R} \varepsilon +
\operatorname*{pen}(\pp_{0})\\ \non  \leq {} &  \left\Vert
\f-g\right\Vert ^{2}+ \lambda_{\pp_{0}} \left(R + \varepsilon\right)
+ \operatorname*{pen}(\pp_{0})\\   \leq {} &
\left\Vert \f-g\right\Vert ^{2}+ \lambda_{\pp_{0}} R
 +\left(\frac{4}{5\sqrt{\log 2}}+1\right) \operatorname*{pen}(\pp_{0}) + 4\varepsilon^2.
 \end{align} So, we deduce from \eqref{pluie} and \eqref{soleil} that there exists $C>0$ such that
\begin{align}\label{congg}\non
 & \mathbb{E}\left[\norm{\f -\fcf}^{2} +\lambda_{\hat{p}} \hat{m}\varepsilon +
\alpha \operatorname*{pen}(\hat{\pp})\right]\\ \non  \leq {} & C \left[
\inf_{\pp \in \ens} \left(\inf_{R
> 0}\left(\inf_{\underset{\normup{g}\leq R}{g \in
\ludp,}}\ \left\Vert \f-g\right\Vert ^{2} +\lambdap R\right) +
\operatorname*{pen}(\pp)\right)
+ \varepsilon ^{2}\right]\\
 \leq {} & C \left[\inf_{p \in \ens} \left( \inf_{g\in \ludp} \left(
\left\Vert \f-g\right\Vert ^{2}+\lambdap \normup{g}\right) +
\operatorname*{pen}(\pp)\right)+\varepsilon^{2}\right].
\end{align} Finally, let us notice that from the fact that
$\alpha \in\, ]0,1[$ and from \eqref{defmchapeaucasinfini}, we have
\begin{align}\label{merp} \non & \mathbb{E}\left[\norm{\f -\fcf}^{2} +
\lambda_{\hat{p}} \norm{\fcf}_{\mathcal{L}_{1}(\mathcal{D}_{\hat{p}})} +
\operatorname*{pen}(\hat{\pp})\right]\\ \non \leq {} &  \frac{1}{\alpha}\
\mathbb{E}\left[\norm{\f -\fcf}^{2} + \lambda_{\hat{p}}
\norm{\fcf}_{\mathcal{L}_{1}(\mathcal{D}_{\hat{p}})} + \alpha
\operatorname*{pen}(\hat{\pp})\right]\\  \leq {} &  \frac{1}{\alpha}\
\mathbb{E}\left[\norm{\f -\fcf}^{2} +\lambda_{\hat{p}} \hat{m}\varepsilon +
\alpha \operatorname*{pen}(\hat{\pp})\right].\end{align} Combining
\eqref{congg} with \eqref{merp} leads to the result. \hfill $\Box$

%%%%%%%%%%%%%%%%%%%%%%%%%%%%%%%%%%%%%%%%%%%%%%%%%%%%%%%%%%%%%%%%%%%%%%%%%%%%%%%%%%%%%
\subsubsection{Proof of Theorem \ref{zlassorn}}

The proof of Theorem \ref{zlassorn} is again an application of Theorem
\ref{zmaingaussnl} and it is thus very similar to the proof of Theorem
\ref{zlasso}. In particular, it is still based on the key idea that the \lasso\
estimator~$\fc$ is an approximate penalized least squares estimator over the
collection of $\ell_{1}$-balls for $m \in \mathbb{N}^*$, $S_{m}=\{\ft\in \lu
\text{, }\normu{\ft} \leq m \sigma/\sqrt{n}\}\text{.}$ The main difference is
that the dictionary $\mathcal{D}$ considered for Theorem~\ref{zlasso} was
finite while the dictionary $\mathcal{D}=\{\vp_{\q,\w},\, \q
\in~\mathbb{R}^\dd, \w \in~\mathbb{R}\}$ is infinite. Consequently, we can not
use the same tools to check the assumptions of Theorem~\ref{zmaingaussnl}, more
precisely to provide an upper bound of $\mathbb{E}\left[\sup_{\ft \in S_{m}}
W(h)\right]$. Here, we shall bound this quantity by using Dudley's criterion
(see Theorem 3.18 in \cite{mas-stflour} for instance) and we shall thus first
establish an upper bound of the $t$-packing number of $\mathcal{D}$ with
respect to $\norm{.}$.

\begin{definition}\textbf{\droit{[$t$-packing numbers]}}
Let $t>0$ and let $\mathcal{G}$ be a set of functions $\mathbb{R}^d \mapsto
\mathbb{R}$. We call $t$-packing number of $\mathcal{G}$ with respect to
$\norme{.}$, and denote by $N \left(t,\mathcal{G},\norme{.}\right)$, the
maximal $m \in \mathbb{N}^{*}$ such that there exist functions
$g_{1},\dots,g_{m} \in \mathcal{G}$ with $\norme{g_{i}-g_{j}} \geq t$ for all
$1\leq i < j \leq m$.
\end{definition}

\begin{lemma}\label{covering} Let $t >0$. Then, the $t$-packing number of $\mathcal{D}$
with respect to $\norm{.}$ is upper bounded by
$$N\left(t,\mathcal{D},\norme{.}\right) \leq (n+1)^{d+1}\ \frac{4+t}{t}.$$\end{lemma}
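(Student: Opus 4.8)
The plan is to exploit the crucial structural feature of $\mathcal{D}$: every neuron $\vp_{\q,\w}$ takes only the values $0$ and $1$. First I would pass to the design points and attach to each pair $(\q,\w)$ its binary pattern
\[
v(\q,\w) = \left(\mathds{1}_{\{\langle \q,x_{i}\rangle + \w > 0\}}\right)_{i=1}^{n} \in \{0,1\}^{n}.
\]
In $\mathbb{H} = \mathbb{R}^{n}$ equipped with $\norme{\ft}^{2} = n^{-1}\sum_{i} \ft^{2}(x_{i})$, two neurons coincide as soon as $v(\q,\w) = v(\q',\w')$, whereas if their patterns differ they differ in at least one coordinate, so that $\norme{\vp_{\q,\w} - \vp_{\q',\w'}}^{2} \geq 1/n > 0$. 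Consequently, any family of neurons that is $t$-separated for some $t > 0$ must realize pairwise distinct patterns, and therefore $N\left(t,\mathcal{D},\norme{.}\right)$ is bounded by the total number $P$ of distinct patterns realizable by the neurons of $\mathcal{D}$, a bound that does not even depend on $t$.

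Next I would estimate $P$, i.e.\ the number of subsets of $\{x_{1},\dots,x_{n}\}$ of the form $\{i : \langle \q,x_{i}\rangle + \w > 0\}$ cut out by an open affine half-space of $\mathbb{R}^{\dd}$. The concept class of such half-spaces has Vapnik--Chervonenkis dimension $\dd+1$ (affine functions on $\mathbb{R}^{\dd}$ form a $(\dd+1)$-dimensional space), so Sauer's lemma gives
\[
P \leq \sum_{k=0}^{\dd+1}\binom{n}{k}.
\]
Geometrically this is the statement that, regarding each data point as a hyperplane $H_{i} = \{(\q,\w) \in \mathbb{R}^{\dd+1} : \langle \q,x_{i}\rangle + \w = 0\}$ in parameter space, the pattern $v(\q,\w)$ is constant on each cell of the arrangement $\bigcup_{i} H_{i}$; if one prefers to avoid quoting Sauer's lemma, the same polynomial bound follows from the classical one-hyperplane-at-a-time recursion bounding the number of cells of an arrangement of $n$ hyperplanes in $\mathbb{R}^{\dd+1}$.

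It then remains to compare this combinatorial quantity with $(n+1)^{\dd+1}$. I would use the elementary injection sending a subset of $\{1,\dots,n\}$ of cardinality at most $\dd+1$ to the $(\dd+1)$-tuple obtained by listing its elements in increasing order and padding with a dummy symbol; since the codomain $(\{1,\dots,n\}\cup\{\star\})^{\dd+1}$ has $(n+1)^{\dd+1}$ elements, this yields $\sum_{k=0}^{\dd+1}\binom{n}{k} \leq (n+1)^{\dd+1}$. Chaining the three steps gives $N\left(t,\mathcal{D},\norme{.}\right) \leq (n+1)^{\dd+1}$, which in particular implies the announced bound $(n+1)^{\dd+1}(4+t)/t$ because $(4+t)/t \geq 1$ for every $t > 0$.

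The only genuinely nontrivial step is the second one, the counting of realizable dichotomies: it rests on the finiteness of the VC dimension of half-spaces (equivalently, on the polynomial cell count of a hyperplane arrangement in $\mathbb{R}^{\dd+1}$). The reduction to binary patterns and the final combinatorial estimate are routine. I would expect the main care to go into stating the pattern-counting step cleanly, since everything hinges on the fact that the Heaviside dictionary produces only $\{0,1\}$-valued functions at the design points.
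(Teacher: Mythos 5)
Your proof is correct, and it actually establishes the stronger, $t$-free bound $N\left(t,\mathcal{D},\norme{.}\right)\leq\sum_{k=0}^{\dd+1}\binom{n}{k}\leq(n+1)^{\dd+1}$, of which the stated inequality is a trivial consequence since $(4+t)/t\geq1$. The paper uses the same first ingredient as you do --- there are at most $(n+1)^{\dd+1}$ ways in which half-spaces can classify the sample $(x_{1},\dots,x_{n})$, a fact it cites from Chapter 9 of Gy\"orfi et al.\ rather than rederiving it via the VC dimension of half-spaces and Sauer's lemma as you propose --- but it handles the second step differently: it covers $\mathcal{D}$ by the unit balls of the one-dimensional spans $\mathcal{F}_{\q,\w}$ of the individual neurons and invokes the packing bound $N\left(t,\{g:\norme{g}\leq R\},\norme{.}\right)\leq\left((4R+t)/t\right)^{D}$ for $D$-dimensional linear spaces (intermediate result (9.10) of the same reference), which is the sole origin of the factor $(4+t)/t$. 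You instead observe that in the fixed design framework, where $\mathbb{H}=\mathbb{R}^{n}$ and functions are identified with their values on the sample, two neurons realizing the same sign pattern are the \emph{same} element of $\mathbb{H}$, so each pattern class contributes at most one point to any $t$-packing and no ball-packing lemma is needed at all. Your route is thus both more elementary and sharper; fed into Dudley's entropy integral in the proof of Theorem \ref{zlassorn}, it would even dispense with the additive $\sqrt{\ln 5}+\sqrt{\pi}$ terms there. One small point of care in your fallback argument via the arrangement of the hyperplanes $H_{i}$ in parameter space $\mathbb{R}^{\dd+1}$: since the paper's convention is $\chi(0)=0$, parameters lying \emph{on} some $H_{i}$ produce patterns too, so one should note that any such pattern is also realized in an open cell (perturb $\w$ downward slightly, which makes every active constraint strictly negative without flipping the strict ones); your primary route through Sauer's lemma avoids this subtlety entirely.
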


\begin{proof}\label{preuvecovering}The inequality can easily be deduced from the
intermediate result (9.10) in the proof of Lemma 9.3 in \cite{walk}. We recall
here this result. Let $\mathcal{G}$ be a set of functions $\mathbb{R}^d \mapsto
\mathbb{R}$. If $\mathcal{G}$ is a linear vector space of dimension $D$, then,
for every $R>0$ and $t>0$, the $t$-packing number of $\{g \in \mathcal{G},
\norme{g}\leq R\}$ with respect to $\norme{.}$ is upper bounded by
$$N\left(t,\left\{g \in \mathcal{G}, \norme{g}\leq
R\right\},\norme{.}\right) \leq \left(\frac{4R+t}{t}\right)^D.$$ We can apply
this result to the linear span of $\vp_{\q,\w}$, that we denote by
$\mathcal{F}_{\q,\w}$, for all $\q \in \mathbb{R}^d$ and $\w \in \mathbb{R}$.
From \eqref{defdicorn}, we have $\sup_{x}\vert \vp_{\q,\w}(x)\vert \leq 1$, so
 $\norme{\vp_{\q,\w}}^2 = \sum_{i=1}^{n} \vp_{\q,\w}^{2}(x_{i})/n
\leq 1$ and we get that
$$\mathcal{D} = \bigcup_{\q \in \mathbb{R}^d, \w \in\mathbb{R}} \{\vp_{\q,\w}\}
\ \subset\  \bigcup_{\q \in \mathbb{R}^d, \w \in\mathbb{R}} \left\{g \in
\mathcal{F}_{\q,\w}, \norme{g}\leq 1\right\},$$ with
$$N\left(t,\left\{g \in \mathcal{F}_{\q,\w}, \norme{g}\leq
1\right\},\norme{.}\right) \leq \frac{4+t}{t},$$ for all $\q \in \mathbb{R}^d$
and $\w \in \mathbb{R}$. To end the proof, just notice that there are at most
$(n+1)^{d+1}$ hyperplanes in $\mathbb{R}^d$ separating the points
$(x_{1},\dots,x_{n})$ in different ways (see Chapter 9 in \cite{walk} for
instance), with the result that there are at most $(n+1)^{d+1}$ ways of
selecting $\vp_{\q,\w}$ in $\mathcal{D}$ that will be different on the sample
$(x_{1},\dots,x_{n})$. Therefore, we get that for all $t>0$,
$$N\left(t,\mathcal{D},\norme{.}\right) \leq (n+1)^{d+1}\ \frac{4+t}{t}.$$
\end{proof}

\begin{remark}\label{lien}\droit{Let us point out the fact that we are able
to get such an upper bound of $N \left(t,\mathcal{D},\norme{.}\right)$ thanks
to the particular structure of the dictionary $\mathcal{D}$. Indeed, for all
$\q \in \mathbb{R}^d$, $\w \in \mathbb{R}$ and $x \in \mathbb{R}^d$,
$\vp_{\q,\w}(x) \in \{0,1\}$, but there are at most $(n+1)^{d+1}$ hyperplanes
in $\mathbb{R}^d$ separating the observed points $(x_{1},\dots,x_{n})$ in
different ways, so there are at most $(n+1)^{d+1}$ ways of selecting
$\vp_{\q,\w} \in \mathcal{D}$ which will give different functions on the sample
$(x_{1},\dots,x_{n})$. In particular, this pro\-perty enables us to bound the
packing numbers of $\mathcal{D}$ without truncation of the dictionary. This
would not be possible for an arbitrary infinite (countable or uncountable)
dictionary and truncation of the dictionary into finite subdictionaries was
necessary to achieve our theoretical results in Section~\ref{section3} when
considering an arbitrary infinite countable dictionary.}
\end{remark}

\n The following technical lemma will also be used in the proof of Theorem
\ref{zlassorn}.
\begin{lemma}\label{majintln} $$\int_{0}^{1} \sqrt{\logn \left(\frac{1}{t}\right)}\ dt \leq
\sqrt{\pi}.$$
\end{lemma}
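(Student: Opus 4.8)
The plan is to compute the integral in closed form by a change of variables and then compare the exact value to $\sqrt{\pi}$. First I would perform the substitution $u = \logn(1/t) = -\logn t$, so that $t = \mathrm{e}^{-u}$ and $dt = -\mathrm{e}^{-u}\,du$. As $t$ decreases from $1$ to $0$ the new variable $u$ increases from $0$ to $+\infty$, so the integral transforms into
$$\int_{0}^{1} \sqrt{\logn(1/t)}\,dt = \int_{0}^{+\infty} \sqrt{u}\,\mathrm{e}^{-u}\,du.$$

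Next I would recognize the right-hand side as the Gamma function evaluated at $3/2$, namely $\Gamma(3/2) = \int_0^{+\infty} u^{1/2}\,\mathrm{e}^{-u}\,du$. Invoking the standard reduction $\Gamma(3/2) = \tfrac12\,\Gamma(1/2) = \tfrac12\sqrt{\pi}$, I conclude that the integral equals exactly $\sqrt{\pi}/2$, which is bounded above by $\sqrt{\pi}$. This establishes the claimed inequality, in fact with a factor of two to spare.

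There is essentially no obstacle here: the only points deserving a line of justification are the convergence of the improper integral (guaranteed by the exponential decay of the integrand, which dominates the $\sqrt{u}$ growth) and the evaluation $\Gamma(3/2)=\tfrac12\sqrt{\pi}$, both entirely standard. If one preferred to avoid the Gamma function altogether, the same bound could be reached by splitting $(0,1)$ and crudely majorizing $\sqrt{\logn(1/t)}$ on each piece, but the substitution to $\Gamma(3/2)$ is the cleanest route and moreover reveals that the stated constant $\sqrt{\pi}$ is not sharp.
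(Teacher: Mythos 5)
Your proof is correct, and it reaches the conclusion by a genuinely different (and in fact sharper) route than the paper. The paper first integrates by parts, writing $\int_{0}^{1}\sqrt{\ln(1/t)}\,dt=\bigl[t\sqrt{\ln(1/t)}\bigr]_{0}^{1}+\int_{0}^{1}\frac{dt}{2\sqrt{\ln(1/t)}}$, then substitutes $u=\sqrt{2\ln(1/t)}$ to identify the result with $\frac{1}{\sqrt{2}}\int_{0}^{+\infty}\mathrm{e}^{-u^{2}/2}\,du$, and finally bounds this by recognizing $\int_{0}^{+\infty}\frac{1}{\sqrt{2\pi}}\,\mathrm{e}^{-u^{2}/2}\,du=\mathbb{P}(Z\geq 0)\leq 1$ for a standard Gaussian $Z$, giving the bound $\sqrt{\pi}$. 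Your direct substitution $t=\mathrm{e}^{-u}$ instead evaluates the integral in closed form as $\Gamma(3/2)=\tfrac{1}{2}\sqrt{\pi}$, with no integration by parts needed. Your observation that the constant is not sharp is accurate and applies to the paper's own argument as well: the paper bounds $\mathbb{P}(Z\geq 0)$ by $1$ where the exact value $1/2$ would recover your $\sqrt{\pi}/2$. What the paper's route buys is that it stays entirely within the Gaussian-probability toolkit used throughout the article and avoids invoking the Gamma function; what yours buys is brevity and the exact value. Since the lemma is only used in the proof of Theorem \ref{zlassorn} to check that $\sqrt{\ln 5}+\sqrt{\pi}$ lies in $(0,4)$, any constant up to $\sqrt{\pi}$ suffices, and both arguments are perfectly adequate there.
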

\begin{proof} By integration by parts and by defining $u=\sqrt{2\logn(1/t)}$,
we have
$$\int_{0}^{1} \sqrt{\logn \left(1/t\right)}\ dt\,  = \,
 \left[t
\sqrt{\logn\left(1/t\right)}\right]_{0}^{1}+\int_{0}^{1}\frac{1}{2\sqrt{\logn(1/t)}}\
\ dt\, =\, \frac{1}{\sqrt{2}}\int_{0}^{+\infty}
\text{\droit{e}}^{-u^2/2}\ du.$$ But, if $Z$ is a standard
Gaussian variable, we have
$$\int_{0}^{+\infty} \frac{1}{\sqrt{2\pi}}\ \text{\droit{e}}^{-u^2/2}\ du\,
= \, \mathbb{P}(Z \geq 0) \leq  1,$$ hence the result.\end{proof}

\bigskip

\n \textbf{Proof of Theorem \ref{zlassorn} }\label{preuvezlassorn} Let us
define $\eps=\sigma/\sqrt{n}.$ Consider the collection of $\ell_{1}$-balls for
$m \in \mathbb{N}^*$,
$$S_{m}=\left\{ \ft\in \lu \text{, }\normu{\ft} \leq
m \varepsilon\right\}\text{.}$$ We have noticed in Section \ref{mst} that the
\lasso\ estimator $\fc$ is a $\rho$-approximate penalized least squares
estimator over the sequence $\left\{S_{m}%
,\ m\geq1\right\}$ for $\operatorname*{pen}(m) = \p m \eps$ and $\rho = \p
\eps$. So, it only remains to determine a lower bound on $\p$ that guarantees
that $\operatorname*{pen}(m)$ satisfies condition \eqref{epengaussnl} and to
apply the conclusion of Theorem \ref{zmaingaussnl}.

\medskip \n Let $h \in S_{m}$. From \eqref{defludrn}, for all $\delta >0$, there exist
coefficients $\theta_{\q,\w}$ such that $h = \sum_{\q,\w} \theta_{\q,\w}\,
\vp_{\q,\w}$ and $\sum_{\q,\w}\vert \theta_{\q,\w}\vert \leq m\eps+\delta$. By
using the linearity of $W$, we get that
$$W(h) = \sum_{\q,\w} \theta_{\q,\w}\, W(\vp_{\q,\w}) \leq
\sup_{\q,\w}\vert W(\vp_{\q,\w})\vert \sum_{\q,\w}\vert \theta_{\q,\w}\vert
\leq \left(m\eps+\delta\right) \sup_{\q,\w}\vert W(\vp_{\q,\w})\vert.$$ Then,
by Dudley's criterion (see Theorem 3.18 in \cite{mas-stflour} for instance), we
have \begin{align*}\Delta_{m} := \mathbb{E}\left[\sup_{\ft \in S_{m}}
W(h)\right] & \leq \left(m \eps+\delta\right) \mathbb{E}\left[\sup_{\q,\w}
\vert W(\vp_{\q,\w})\vert\right]\\ & \leq 12 (m \eps+\delta) \int_{0}^{\alpha}
\sqrt{\ln\left(N\left(t,\mathcal{D},\norme{.}\right)\right)}\ \,
dt,\end{align*} where $\alpha^2=\sup_{\q,\w}
\mathbb{E}\left[W^2(\vp_{\q,\w})\right] = \sup_{\q,\w} \norm{\vp_{\q,\w}}^2 =
\sup_{\q,\w}\left(\sum_{i=1}^{n} \vp_{\q,\w}^{2}(x_{i})/n\right) \leq~1$ from
\eqref{defdicorn}. So,
$$\Delta_{m} \leq 12 (m \eps+\delta) \int_{0}^{1} \sqrt{\logn
\left(N\left(t,\mathcal{D},\norme{.}\right)\right)}\ \, dt.$$ Moreover, by using
Lemma \ref{covering} and Lemma \ref{majintln}, we get that
\begin{align*} &  \int_{0}^{1} \sqrt{\logn\left(
N\left(t,\mathcal{D},\norm{.}\right)\right)}\ \, dt\\  \leq {} &  \int_{0}^{1}
\sqrt{\logn \left[(n+1)^{d+1}\ \frac{4+t}{t} \right]}\ \, dt \\  = {} &  \int_{0}^{1}
\sqrt{\logn \left((n+1)^{d+1}\right) + \logn\left(4+t\right) + \logn\left(\frac{1}{t}\right)}
\ \, dt\\ \leq {} &  \sqrt{\logn \left(
(n+1)^{\dd+1}\right)} + \int_{4}^{5} \sqrt{\logn
\left(t\right)}\ \, dt + \int_{0}^{1} \sqrt{\logn
\left(\frac{1}{t}\right)}\ \, dt \\   \leq {} & \sqrt{\logn \left(
(n+1)^{\dd+1}\right)} + \sqrt{\logn 5} + \sqrt{\pi}.
\end{align*} Thus,
$$\Delta_{m} \leq 12 (m \eps+\delta) \left[
\sqrt{\logn \left((n+1)^{\dd+1}\right)}+C\right],$$ where $C=\sqrt{\logn 5} + \sqrt{\pi} \in\, ]0,4[$.

\medskip \n Now, choose the weights of the form $x_{m}= \gamma m$ where
$\gamma$ is a positive numerical constant specified below. Then $\sum_{m\geq1}
e^{-x_{m}}=1/\left(e^{\gamma}-1\right):=\Sigma_{\gamma} < +\infty$.

\medskip \n  Defining $K = 14/13
> 1$, $\gamma=13\, ( 4-C) /4 >0$, and using the inequality
$2\sqrt{ab}\leq\eta a+\eta^{-1}b$ with $\eta = 1/6$, we get that
\begin{align}\nonumber &
2K\varepsilon\left(  \Delta_{m}+\varepsilon
x_{m}+\sqrt{\Delta_{m}\varepsilon
x_{m}}\right)\\ \non  \leq {} &  K\varepsilon\left(  \frac{13}{6}\, \Delta_{m}+8x_{m}%
\varepsilon\right)\\ \non   \leq {} &  K (m\varepsilon+\delta) \eps\left( 26
\left[\sqrt{\logn \left((n+1)^{\dd+1}\right)}+C\right] +8\gamma\right)\\ \non
\leq {} &  28 (m\varepsilon+\delta)\eps\left(\sqrt{\logn \left(
(n+1)^{\dd+1}\right)}+C + 4-C\right)  \\ \non  < {} & 28
(m\varepsilon+\delta)\eps\left(\sqrt{\logn \left( (n+1)^{\dd+1}\right)} +
4\right).\end{align} Since this inequality is true for all $\delta >0$, we get
when $\delta$ tends to 0 that
$$2K\varepsilon\left(  \Delta_{m}+\varepsilon
x_{m}+\sqrt{\Delta_{m}\varepsilon x_{m}}\right)   \leq  28 m\varepsilon^{2}\left(\sqrt{\logn \left(
(n+1)^{\dd+1}\right)} + 4\right)  \leq  \p m \eps$$ as soon as
\begin{equation}\label{condlambda} \p \geq 28
\varepsilon \left(\sqrt{\logn
\left((n+1)^{\dd+1}\right)}+4\right).\end{equation} For such values of $\p$,
condition (\ref{epengaussnl}) on the penalty function is satisfied and me may
apply Theorem~\ref{zmaingaussnl} with $\operatorname{pen}(m) = \p m
\varepsilon$ and $\rho = \p \varepsilon$ for all $m \geq 1$. Taking into
account the definition of $\hat{m}$ at \eqref{defmchapeau} and noticing that
$\eps^2 \leq \p \eps/112$ for $\p$ satisfying \eqref{condlambda}, the risk
bound (\ref{eriskgaussnl}) leads to
\begin{align*} & \mathbb{E}\left[\norm{\f-\fc}^{2} + \p \normu{\fc}\right]\\  \leq {} & C
\left[ \inf_{m\geq1}\left( \inf_{\normu{\ft} \leq
m\varepsilon}\norm{\f-\ft}^{2}+\p m\varepsilon\right) +\p\varepsilon
+(1+\Sigma_{\gamma}) \varepsilon ^{2}\right]\\  \leq {} & C \left[
\inf_{m\geq1}\left( \inf_{\normu{\ft} \leq m\varepsilon}\norm{\f-\ft}^{2}+\p
m\varepsilon\right) +\p\varepsilon\right],\end{align*} where $C>0$ is some
absolute constant. We end the proof as the one of Theorem~\ref{zlasso}. \hfill
$\Box$

%%%%%%%%%%%%%%%%%%%%%%%%%%%%%%%%%%%%%%%%%%%%%%%%%%%%%%%%%%%%%%%%%%%%%%%%%%%%%%%%%%%%%%%%%%%

\subsection{Rates of convergence}

\subsubsection{Proofs of the upper bounds}

We first prove a crucial equivalence between the rates of convergence of the
deterministic \lasso s and $K_{\mathcal{D}_{p}}$-functionals, which shall able
us to provide an upper bound of the rates of convergence of the deterministic
\lasso s when the target function belongs to some interpolation space $\bqr$.
Then, we shall pass on these rates of convergence to the \lasso\ and the
\lassotype estimators. Looking at the proofs of Proposition
\ref{propnvestlasso} and Proposition \ref{propnvest}, we can see that   the
rate of convergence of a \lasso\ estimator  is nothing else than the rate of
convergence of the corresponding deterministic \lasso, whereas we can choose
the best penalized rate of convergence of all the deterministic \lasso s to get
the rate of convergence of the \lassotype estimator, which explains why this
estimator can achieve a much better rate of convergence than any \lasso\
estimator.

\paragraph{Proof of Lemma \ref{propgenerale}.} \label{equikl} Let us first
prove the right-hand side inequality of~\eqref{eq1}. For all $\ft \in \lude$,
$\p \geq 0$ and $\delta
>0$, we have
$$\norm{\f-\ft}^2+\delta^2 \norm{\ft}_{\lude}^2+
\frac{\p^2}{4\delta^2}\ \leq\ \left(\norm{\f-\ft}+\delta
\norm{\ft}_{\lude}\right)^2 + \frac{\p^2}{4\delta^2}.$$ Taking the infimum on
all $\delta > 0$ on both sides, and noticing that the infimum on the left-hand
side is achieved for $\delta^2 = \p/\left(2\norm{\ft}_{\lude}\right)$, we get
that
$$\norm{\f-\ft}^2+ \p
\norm{\ft}_{\lude}\ \leq\ \inf_{\delta
>0}\left[\left(\norm{\f-\ft}+\delta
\norm{\ft}_{\lude}\right)^2 + \frac{\p^2}{4\delta^2}\right].$$ Then, taking the
infimum on all $\ft \in \lude$, we get that\begin{align*} & \inf_{\ft \in
\lude}\left(\norm{\f-\ft}^2+ \p \norm{\ft}_{\lude}\right)\\
  \leq {} & \inf_{\delta
>0}\left[\inf_{\ft \in \lude}\left(\norm{\f-\ft}+\delta
\norm{\ft}_{\lude}\right)^2 + \frac{\p^2}{4\delta^2}\right]\\
 = {} & \inf_{\delta
>0}\left[\left(\inf_{\ft \in \lude}\left(\norm{\f-\ft}+\delta
\norm{\ft}_{\lude}\right)\right)^2 + \frac{\p^2}{4\delta^2}\right]\\
 = {} & \inf_{\delta
>0}\left(K^2_{D}(\f,\delta) + \frac{\p^2}{4\delta^2}\right),\end{align*}
which proves the right-hand side inequality of \eqref{eq1}. Let us now prove
similarly the left-hand side inequality of \eqref{eq1}. By definition of
$L_{D}(\f,\p)$, for all $\eta
>0$, there exists $h_{\eta}$ such that
$L_{D}(\f,\p) \leq \norm{\f-h_{\eta}}^2+\p\norm{h_{\eta}}_{\lude} \leq
L_{D}(\f,\p) + \eta$. For all $\delta
>0$, we have
\begin{align*} K^{2}_{D}(\f,\delta) + \frac{\p^2}{2\delta^2}\ & =
\inf_{h \in \lude} \left(\norm{\f-h}+\delta\norm{h}_{\lude}\right)^2
+ \frac{\p^2}{2\delta^2}
\\ & \leq
\left(\norm{\f-h_{\eta}}+\delta\norm{h_{\eta}}_{\lude}\right)^2 +
\frac{\p^2}{2\delta^2} \\ & \leq
2\left(\norm{\f-h_{\eta}}^2+\delta^2\norm{h_{\eta}}_{\lude}^2\right)+
\frac{\p^2}{2\delta^2}\ .\end{align*} Taking the infimum on all
$\delta > 0$ on both sides, and noticing that the infimum on the right-hand
side is achieved for $\delta^2 = \p/\left(2\norm{\ft_{\eta}}_{\lude}\right)$,
we get that
$$ \inf_{\delta > 0}\left(K^{2}_{D}(\f,\delta) +
\frac{\p^2}{2\delta^2}\right)  \leq
2\left(\norm{\f-h_{\eta}}^2+\p\norm{h_{\eta}}_{\lude}\right) \leq
2\left(L_{D}(\f,\p) + \eta\right).$$ We get the expected inequality when $\eta$
tends to zero. \hfill $\Box$

\paragraph{Proof of Lemma \ref{majinf}.} \label{preuvemajinf} Let $p \in
\ensn$ and $\delta >0$. Applying \eqref{eq1} with $D = \mathcal{D}_{\pp}$ and
$\p=\lambdap$, we have
\begin{equation}\label{vendr}\inf_{\ft\in \ludp} \left(
\norm{\f-\ft}^{2}+\lambdap \normup{\ft}\right) \leq
 \inf_{\delta
>0}\left(K_{\mathcal{D}_{p}}^2(\f,\delta)+\frac{\lambdap^2}{4\delta^2}\right).\end{equation}
So, it just remains to bound $K_{\mathcal{D}_{p}}^2(\f,\delta)$ when $\f \in
\bqr(R)$. Let $\pa := 1/q-1/2$. By definition of $\f \in \bqr(R)$, for all
$\delta
>0$, there exists $g \in \lur$ such that $\norm{\f-g}+\delta\normlur{g} \leq
R\, \delta^{2\pa}$. So, we have
\begin{equation}
\norm{\f-g} \leq R\,
\delta^{2\pa}\label{equation1}\end{equation} and
\begin{equation} \normlur{g} \leq R\,
\delta^{2\pa-1}.\label{equation2}
\end{equation} Then, by definition of $g \in \lur$, there exists $g_{\pp} \in \ludp$ such that
\begin{equation}\label{equation3} \normup{g_{p}} \leq \normlur{g}\end{equation}
and
\begin{equation}\norm{g-g_{\pp}} \leq \normlur{g}\,
\pp^{-r}.\label{equation4}\end{equation} Then, we get from
\eqref{equation1}, \eqref{equation4} and \eqref{equation2} that
\begin{equation}\label{equation5}\norm{\f-g_{p}} \leq
\norm{\f-g}+\norm{g-g_{p}} \leq R\, \left(\delta^{2\pa}+
\delta^{2\pa-1}\, \pp^{-r}\right),\end{equation} and we deduce
from \eqref{defkdfunct}, \eqref{equation5}, \eqref{equation3} and
\eqref{equation2} that
$$
K_{\mathcal{D}_{p}}(\f,\delta) \leq
\norm{\f-g_{p}}+\delta\,\normup{g_{p}} \leq R
\left(2\delta^{2\pa}+\delta^{2\pa-1}\, \pp^{-r}\right).
$$ So, we get from \eqref{vendr} that
\begin{align}\label{dinpf}\non \inf_{\ft\in \ludp} \left( \norm{\f-\ft}^{2}+\lambdap
\normup{\ft}\right) & \leq \inf_{\delta
>0}\left(R^2
\left(2\delta^{2\pa}+\delta^{2\pa-1}\, \pp^{-r}\right)^2+\frac{\lambdap^2}{4\delta^2}\right)\\
& \leq \inf_{\delta
>0}\left(2R^2
\left(4\delta^{4\pa}+\delta^{4\pa-2}\,
\pp^{-2r}\right)+\frac{\lambdap^2}{4\delta^2}\right).\end{align} Let us now
consider $\delta_{0}$ such that $8 R^2 {\delta_{0}}^{4\pa} = \lambdap^2
\left({4\delta_{0}}^2\right)^{-1}$, and $\delta_{1}$ such that $2 R^2
\delta_{1}^{4\pa-2}\, \pp^{-2r} = \lambdap^2
\left({4\delta_{1}}^2\right)^{-1}$, that is to say $\delta_{0} = \left(\lambdap
\left(4\sqrt{2}R\right)^{-1}\right)^{1/(2\alpha+1)}$ and $\delta_{1} =
\left(\lambdap\, p^r \left(2\sqrt{2}R\right)^{-1}\right)^{1/(2\alpha)}$. We can
notice that there exists $C_{q}>0$ depending only on $q$ such that
$\delta_{0}^{4\pa-2}\, \pp^{-2r} \leq C_{q}\delta_{0}^{4\pa}$ for all $p$
checking $\lambdap\, p^{r(2\pa+1)} \geq R$, while $\delta_{1}^{4\pa}\leq
C_{q}\delta_{1}^{4\pa-2}\, \pp^{-2r}  $ for all $p$ checking $\lambdap\,
p^{r(2\pa+1)} < R$. Therefore, we deduce from \eqref{dinpf} that there exists
$C_{q}>0$ depending only on $q$ such that for all $p$ checking $\lambdap\,
p^{r(2\pa+1)} \geq R$,
\begin{align}\label{bonne}\non
\inf_{\ft\in \ludp} \left( \norm{\f-\ft}^{2}+\lambdap \normup{\ft}\right) &
 \leq  2R^2
\left(4\delta_{0}^{4\pa}+\delta_{0}^{4\pa-2}\,
\pp^{-2r}\right)+\frac{\lambdap^2}{4\delta_{0}^2}\\ \non & \leq  C_{q}\, R^2
\delta_{0}^{4\pa}\\ \non  & \leq  C_{q}\, R^{\frac{2}{2\pa+1}} \lambdap^{\frac{4\pa}{2\pa+1}}
\\  & =  C_{q}\, R^{q} \lambdap^{2-q},\end{align} while for all
$p$ checking $\lambdap\, p^{r(2\pa+1)} < R$,
\begin{align}\label{nvelle}\non
\inf_{\ft\in \ludp} \left( \norm{\f-\ft}^{2}+\lambdap \normup{\ft}\right) & \leq  2R^2
\left(4\delta_{1}^{4\pa}+\delta_{1}^{4\pa-2}\,
\pp^{-2r}\right)+\frac{\lambdap^2}{4\delta_{1}^2}\\ \non & \leq  C_{q}\, R^2
\delta_{1}^{4\pa-2}\,
\pp^{-2r}\\ \non  & \leq  C_{q}\, R^{\frac{1}{\pa}}\, p^{-\frac{r}{\pa}}\, \lambdap^{2-\frac{1}{\pa}}
\\  & =  C_{q} \left(R p^{-r}\right)^{\frac{2q}{2-q}} \lambdap^{\frac{4(1-q)}{2-q}}.\end{align}
Inequalities \eqref{bonne} and \eqref{nvelle} can be summarized into the
following result: $$\inf_{\ft\in \ludp} \left( \norm{\f-\ft}^{2}+\lambdap
\normup{\ft}\right)  \leq C_{q} \max\left(R^{q} \lambdap^{2-q}\, , \, \left(R
p^{-r}\right)^{\frac{2q}{2-q}} \lambdap^{\frac{4(1-q)}{2-q}}\right).$$\hfill
$\Box$

\paragraph{Proof of Proposition
\ref{propnvestlasso}.}\label{preuvepropnvestlasso} From \eqref{majriskl} and
\eqref{colless}, we know that there exists some constant $C_{q}>0$ depending
only on $q$ such that, for all $p \in \ensn$, the quadratic risk of $\fcp$ is
bounded by
\begin{align}\non \mathbb{E}\left[\norm{\f -\fcp}^{2}
\right] & \leq  C_{q} \left(\max\left(R^{q} \lambdap^{2-q}\, , \, \left(R
p^{-r}\right)^{\frac{2q}{2-q}} \lambdap^{\frac{4(1-q)}{2-q}}\right)+ \lambdap
\varepsilon\right)\\ \non & \leq  C_{q} \max\left(R^{q} \lambdap^{2-q}\, , \,
\left(R p^{-r}\right)^{\frac{2q}{2-q}} \lambdap^{\frac{4(1-q)}{2-q}}\, , \,
\lambdap \varepsilon\right).\end{align} By remembering that $\lambdap =
4\eps(\sqrt{\ln p}+1)$ and by comparing the three terms inside the maximum
according to the value of $p$, we get \eqref{onzes}, \eqref{douzes} and
\eqref{treizes}. \hfill $\Box$

\paragraph{Proof of Proposition \ref{propnvest}.}\label{preuvepropnvest} From
\eqref{majrisklt} and \eqref{colless}, we know that there exists some constant
$C_{q}>0$ depending only on $q$ such that the quadratic risk of $\fcf$ is
bounded by
\begin{align}\label{rosebis}\non & \mathbb{E}\left[\norm{\f -\fcf}^{2}
\right]\\ \non \leq {} &  C_{q} \left[\inf_{p \in \ens}\left(\max\left(R^{q}
\lambdap^{2-q}\, , \, \left(R p^{-r}\right)^{\frac{2q}{2-q}}
\lambdap^{\frac{4(1-q)}{2-q}}\right)+ \varepsilon^2 \logn p\right)
+\varepsilon^2\right]\\   \leq {} &  C_{q} \inf_{p \in \ens
\setminus\{1\}}\left(\max\left(R^{q} \left(\eps\sqrt{\ln p}\right)^{2-q}\, , \,
\left(R p^{-r}\right)^{\frac{2q}{2-q}} \left(\eps \sqrt{\ln
p}\right)^{\frac{4(1-q)}{2-q}}\right)+ \varepsilon^2 \logn p\right),
\end{align} where we use the fact that for all $p \geq 2$, we have $\lambdap
= 4\eps(\sqrt{\ln p}+1) \leq 4(1+1/\sqrt{\ln 2}) \eps \sqrt{\ln p}$ and $\eps^2
\leq \eps^2 (\ln p)/ \ln 2$. We now choose $p$ such that the two terms inside
the maximum are approximately of the same order. More precisely, let us define
$$J_{q,r} = \left\lceil \frac{q}{2r} \log_{2} \left(R\eps^{-1}\right)\right\rceil,$$
where $\lceil x\rceil$ denotes the smallest integer greater than $x$, and
$p_{q,r} := 2^{J_{q,r}}$. Since we have assumed $R\eps^{-1} \geq
\text{\upshape{e}}$, we have $p_{q,r} \in \ens \setminus \{1\}$ and we deduce
from \eqref{rosebis} that
\begin{align}\label{roseter} \non & \mathbb{E}\left[\norm{\f -\fcf}^{2}
\right]\\  \leq {} & C_{q} \left(\max\left(R^{q} \left(\eps\sqrt{\ln
p_{q,r}}\right)^{2-q}\, , \, \left(R p_{q,r}^{-r}\right)^{\frac{2q}{2-q}}
\left(\eps \sqrt{\ln p_{q,r}}\right)^{\frac{4(1-q)}{2-q}}\right)+ \varepsilon^2
\logn p_{q,r}\right).\end{align} Now, let us give an upper bound of each
term of the right-hand side of \eqref{roseter} by assuming that $R\eps^{-1}
\geq \max\left(\text{\upshape{e}},(4r)^{-1}q\right)$. First, we have by
definition of $p_{q,r}$ that $$\ln p_{q,r} \leq \ln
2+\frac{q}{2r}\ln\left(R\eps^{-1}\right).$$ Moreover, for all $x>0$, $\ln 2
\leq 2\, x \ln x$ and by assumption $R\eps^{-1}\geq q/(4r)$, so $$\ln 2 \leq
\frac{q}{2r} \ln \left(\frac{q}{4r}\right) \leq \frac{q}{2r} \ln
\left(R\eps^{-1}\right),$$ and thus we get that
\begin{equation}\label{paine}\ln p_{q,r} \leq
\frac{q}{r}\ln\left(R\eps^{-1}\right).\end{equation} Then, we deduce from
\eqref{paine} that the first term of \eqref{roseter} is upper bounded by
\begin{equation}\label{lever}R^{q} \left(\eps\sqrt{\ln p_{q,r}}\right)^{2-q}
 \leq R^q \left(\frac{q}{r}\right)^{1-\frac{q}{2}} \left(\eps
 \sqrt{\ln\left(R\eps^{-1}\right)}\right)^{2-q}.\end{equation} For
the second term of \eqref{roseter}, using \eqref{paine}, the fact that $p_{q,r}
\geq (R\eps^{-1})^{\frac{q}{2r}}$, that $\frac{4(1-q)}{2-q} \leq 2-q$ and that
$\ln(R\eps^{-1}) \geq 1$, we get that
\begin{align}\label{mamia} \non  \left(R
p_{q,r}^{-r}\right)^{\frac{2q}{2-q}} \left(\eps \sqrt{\ln
p_{q,r}}\right)^{\frac{4(1-q)}{2-q}} &  \leq   R^q \eps^{2-q}
\left(\sqrt{\frac{q}{r}\ln\left(R\eps^{-1}\right)}\right)^{\frac{4(1-q)}{2-q}}\\  &  \leq
R^q \left(\frac{q}{r}\right)^{\frac{2(1-q)}{2-q}}\left(\eps
\sqrt{\ln\left(R\eps^{-1}\right)}\right)^{2-q}.\end{align}
For the third term of \eqref{roseter},  we have $$\eps^{2} \ln p_{q,r}  \leq
 \frac{q}{r}\, \eps^{2} \ln\left(R\eps^{-1}\right)
 =  \frac{q}{r}
 \left(\frac{\ln \left[(R\eps^{-1})^{2}\right]}{2(R\eps^{-1})^{2}}\right)^{q/2} R^q \left(\eps
\sqrt{\ln\left(R\eps^{-1}\right)}\right)^{2-q}.$$ Now, let us introduce
$$g:\ ]0,+\infty[ \mapsto \mathbb{R},\ x \mapsto \frac{\ln x}{x}.$$ It is easy
to check that $g(x^2) \leq 1/x$ for all $x>0$. Using this property and the fact
that $R\eps^{-1} \geq \text{\upshape{e}}$, we get that
$$\frac{\ln \left[(R\eps^{-1})^{2}\right]}{(R\eps^{-1})^{2}} = g\left((R\eps^{-1})^{2}\right) \leq \frac{1}{R\eps^{-1}}
\leq \frac{1}{\text{\upshape{e}}},$$ and thus
\begin{equation}\label{france}  \eps^{2} \ln p_{q,r}  \leq  \frac{q}{r} \left(\frac{1}{2\text{\upshape{e}}}\right)^{q/2}
 R^q \left(\eps \sqrt{\ln\left(R\eps^{-1}\right)}\right)^{2-q}.\end{equation}
Then, we deduce from \eqref{roseter}, \eqref{lever}, \eqref{mamia} and
\eqref{france} that there exists $C_{q,r}>0$ depending only on $q$ and $r$ such
that
$$\mathbb{E}\left[\norm{\f -\fcf}^{2} \right] \leq C_{q,r}\, R^q \left(\eps
\sqrt{\ln\left(R\eps^{-1}\right)}\right)^{2-q}.$$  \hfill $\Box$

\paragraph{Proof of Proposition \ref{propcvlassorn}.}\label{preuvepropcvlassorn} Set $\eps = \sigma/\sqrt{n}$. From Theorem \ref{zlassorn}, we have
\begin{equation}\label{take}\mathbb{E}\left[\norm{\f-\fc}^{2}\right] \leq C\left[
\inf_{\ft \in \lu} \left( \norm{\f-\ft}^{2}+\p \normu{\ft}\right) +\p
\eps\right],\end{equation} where $C$ is an absolute positive constant. Then, if
$\f \in \bq(R)$, we get from \eqref{kf} and \eqref{eq1} with $D=\mathcal{D}$
that $$\inf_{\ft \in \lu} \left(\norm{\f-\ft}^2+\p \normu{\ft}\right) \leq
\inf_{\delta
>0}\left(R^2
\delta^{4\pa}+\frac{\p^2}{4\delta^2}\right).$$ The infimum on the right-hand
side is achieved for $\delta = \left(\p/(2R)\right)^{1/(2\pa+1)}$ and the last
inequality leads to \begin{align*}\inf_{\ft \in \lud} \left(\norm{\f-\ft}^2+\p
\normu{\ft}\right)  & \leq  2 R^{\frac{2}{2\pa+1}}
\left(\frac{\p}{2}\right)^{\frac{4\pa}{2\pa+1}}\\ & =
2^{\frac{1-2\pa}{1+2\pa}}\, R^{\frac{2}{2\pa+1}}\, \p^{\frac{4\pa}{2\pa+1}}\\ &
= 2^{q-1} R^{q} \p^{2-q}\, \text{.}\end{align*} Thus, we deduce from
\eqref{take} that there exists some $C_{q}>0$ depending only on~$q$ such that
\begin{align}  \non & \mathbb{E}\left[\norm{\f-\fc}^{2}\right]\\ \non \leq {} &
C_{q}\left[R^{q} \p^{2-q} +\p \eps\right]\\ \non
 \leq {} &  C_{q}\left[R^{q} \left(\eps\left(\sqrt{\logn
\left((n+1)^{d+1}\right)}+4\right)\right)^{2-q} +\eps^2 \left(\sqrt{\logn
\left((n+1)^{d+1}\right)}+4\right)\right]\\ \label{jatrep}
\leq {} &  C_{q}\left[R^{q}
\left(\eps\sqrt{\logn \left((n+1)^{d+1}\right)}\right)^{2-q}
+\eps^2 \sqrt{\logn \left((n+1)^{d+1}\right)}\right]\\ \non
 \leq {} &
 C_{q} \max\left(R^{q}
\left(\eps\sqrt{\logn \left((n+1)^{d+1}\right)}\right)^{2-q}\, , \, \eps^{2}
\sqrt{\logn \left((n+1)^{d+1}\right)}\right)\\ \label{repjat} \leq {} & C_{q}\,
R^{q} \left(\eps\sqrt{\logn \left((n+1)^{d+1}\right)}\right)^{2-q},\end{align}
where we get \eqref{jatrep} by using the fact $4 \leq 5 \sqrt{\ln2} \leq 5
\sqrt{\logn \left((n+1)^{d+1}\right)}$ for $n\geq 1$ and $d\geq1$ and
\eqref{repjat} thanks to the assumption $R\eps^{-1} \geq \left[\logn
\left((n+1)^{d+1}\right)\right]^{\frac{q-1}{2q}}.~\Box$

%%%%%%%%%%%%%%%%%%%%%%%%%%%%%%%%%%%%%%%%%%%%%%%%%%%%%%%%%%%%%%%%%%%%%%%%%%%%%%%%%%%%%%%%%%%%%%
\subsubsection{Proofs of the lower bounds in the orthonormal case}

To prove that the rates of convergence \eqref{quatorzes} achieved by the
\lassotype estimator on the classes $\bqr$ are optimal, we propose to establish
a lower bound of the minimax risk over $\bqr$ when the dictionary is an
orthonormal basis of~$\mathbb{H}$ and to check that it is of the same order as
the rates \eqref{quatorzes}. The first central point is to prove Remark
\ref{eqcocnor}, that is to say the inclusion in the orthonormal case of the
space $\wlq(R)\cap \besov(R)$ in the space $\bqr(C_{q,r}R)$  for all $R>0$ and
some $C_{q,r}>0$ depending only on $q$ and $r$. Taking this inclusion into
account, we shall then focus on establishing a lower bound of the minimax risk
over the smaller space $\swlq(R)\cap \besov(R)$, which shall reveal to be an
easy task, and which entails the same lower bound over the bigger space $\bqr$.

\paragraph{Proof of Remark \ref{eqcocnor}.}\label{preuveeqcocnor} Let $R>0$. The
first inclusion comes from the simple inclusion $\swlq(R) \subset \wlq(R)$. Let
us prove the second inclusion here. Assume that $\f \in \wlq(R) \cap
\besov(R)$. For all $p \in \ensn$ and $\pb
>0$, define \begin{equation}\label{deffpbp}\f_{p,\pb} := \argmin_{\ft \in
\ludp}\left(\norm{\f-\ft}^2+\pb\normup{\ft}\right).\end{equation} The proof
will be divided in two main parts. First, we shall choose $\beta$ such that
$\f_{p,\pb} \in \lur$. Secondly, we shall choose $p$ such that
$\norm{\f-f_{p,\pb}} + \delta \normlur{f_{p,\pb}} \leq C_{q,r} R\,
\delta^{2\alpha}$ for all $\delta>0$, some $C_{q,r}>0$ and $\alpha=1/q-1/2$,
which shall prove that $\f \in \bqr(C_{q,r}R)$. To establish our results, we
shall need an upper bound of $\norm{\f-f_{p,\pb}}$ and $\normup{f_{p,\pb}}$.
These bounds are provided by Lemma \ref{gentmam} stated below.

\medskip \n  Let us first choose $\beta$ such that $\f_{p,\pb} \in \lur$.
From Lemma \ref{gentmam}, we have $$\norm{\f-\f_{p,\pb}} \leq R
(p+1)^{-r}+\sqrt{C_{q}}\, R^{q/2} \pb^{1-q/2}.$$ Now choose $\pb$ such that $R
(p+1)^{-r} = \sqrt{C_{q}}\, R^{q/2} \pb^{1-q/2}$, that is to say
\begin{equation}\label{defbp}\pb_{\pp} = R \left(\sqrt{C_{q}}\, (p+1)^{r}\right)^{-\frac{2}{2-q}}.\end{equation} Then, we
have
\begin{equation}\label{lj} \norm{\f-\f_{p,\pb_{p}}} \leq 2 R
(p+1)^{-r} \leq 2 R
(2p)^{-r} = 2^{1-r} R p^{-r}.
\end{equation}
Let us now check that $\fpbp \in \lur$.  Define
\begin{equation}\label{defcp}C_{p}:= \max\left\{2^{2-r} R,
\max_{p' \in \ensn, p' \leq
p}\norm{\f_{p',\beta_{p'}}}_{\mathcal{L}_{1}(\mathcal{D}_{p'})}\right\}.\end{equation}
Let $p'\in \ensn$. By definition of $\f_{p',\beta_{p'}}$, we have
$\f_{p',\beta_{p'}} \in \mathcal{L}_{1}(\mathcal{D}_{p'})$. If $p' \leq p$,
then we deduce from \eqref{lj} that
$$\norm{\fpbp-\f_{p',\beta_{p'}}}  \leq \norm{\fpbp-\f} +
\norm{\f-\f_{p',\beta_{p'}}}  \leq 2^{1-r} R \left(p^{-r}+p'^{-r}\right)  \leq
C_{p} p'^{-r},$$ and we have
$\norm{\f_{p',\beta_{p'}}}_{\mathcal{L}_{1}(\mathcal{D}_{p'})} \leq C_{p}$ by
definition of $C_{p}$. If $p' > p$, then $\ludp \subset
\mathcal{L}_{1}(\mathcal{D}_{p'})$ and $\fpbp \in
\mathcal{L}_{1}(\mathcal{D}_{p'})$ with
$\norm{\fpbp}_{\mathcal{L}_{1}(\mathcal{D}_{p'})} \leq \normup{\fpbp} \leq
C_{p}$ and $\norm{\fpbp-\fpbp} = 0 \leq C_{p} p'^{-r}$. So, $\fpbp \in \lur$.

\medskip \n Now, it only remains to choose a convenient $p \in \ensn$ so as to
prove that $f \in~\bqr(R)$.

\n Let us first give an upper bound of $\normlur{\fpbp}$ for all $p \in \ensn$.
By definition of $\normlur{\fpbp}$ and the above upper bounds, we have
$\normlur{\fpbp} \leq C_{p}$. So, we just have to bound $C_{p}$. Let $p' \in
\ensn, p'\leq p$. From Lemma~\ref{gentmam}, we know that there exists $C_{q}
>0$ depending only on $q$ such that
$\norm{\f_{p',\beta_{p'}}}_{\mathcal{L}_{1}(\mathcal{D}_{p'})} \leq C_{q} R^q
\pb_{p'}^{1-q}$. So, we get from  \eqref{defbp} that
\begin{align*}
\norm{\f_{p',\beta_{p'}}}_{\mathcal{L}_{1}(\mathcal{D}_{p'})} &  \leq C_{q} R
\left(\sqrt{C_{q}}\, (p'+1)^{r}\right)^{\frac{2(q-1)}{2-q}}\\ & \leq C_{q} R
\left(\sqrt{C_{q}} (p+1)^{r}\right)^{\frac{2(q-1)}{2-q}}\\ & =
C_{q}^{\frac{1}{2-q}} R \left(2p\right)^{\frac{2(q-1)r}{2-q}},\end{align*} and we deduce
from \eqref{defcp} that $$C_{p} \leq \max\left(2^{2-r} R, C_{q}^{\frac{1}{2-q}}
R \left(2p\right)^{\frac{2(q-1)r}{2-q}}\right) \leq C_{q,r} R
p^{\frac{2(q-1)r}{2-q}}$$ where $C_{q,r}>0$ depends only on $q$ and $r$. Thus,
we have
\begin{equation}\label{ct}\normlur{\fpbp}\leq C_{q,r} R
p^{\frac{2(q-1)r}{2-q}}.\end{equation}

\medskip \n Then, we deduce from \eqref{lj} and \eqref{ct} that for all $p \in \ensn$ and
$\delta>0$,
\begin{align}\label{oeucq}\non  \inf_{\ft \in
\lur}{\norm{\f-\ft}+\delta\normlur{\ft}} &
\leq  \norm{\f-\fpbp}+\delta\normlur{\fpbp}\\ & \leq 2^{1-r} R p^{-r}+\delta
C_{q,r} R p^{\frac{2(q-1)r}{2-q}}.\end{align} We now choose $p \geq 2$ such
that $p^{-r} \approx \delta\, p^{\frac{2(q-1)r}{2-q}}$. More precisely, set
$p=2^J$ where $J=\left\lceil (2-q)(qr)^{-1} \log_{2}
(\delta^{-1})\right\rceil$. With this value of $p$, we get that there exists
$C^{\prime}_{q,r}>0$ depending only on $q$ and $r$ such that \eqref{oeucq} is
upper bounded by $C^{\prime}_{q,r} R\,
 \delta^{(2-q)/q} = C^{\prime}_{q,r} R\,
 \delta^{2 \alpha}$. This means that $\f
\in \bqr(C^{\, \prime}_{q,r}\, R)$, hence \eqref{eqcocno}.\hfill $\Box$

\smallskip
\begin{lemma} \label{gentmam} Assume that the dictionary $\mathcal{D}$ is an
orthonormal basis of the Hilbert space $\mathbb{H}$ and that there exist
$1<q<2$, $r>0$ and $R>0$ such that $\f \in \wlq(R) \cap \besov(R).$ For all $p
\in \ensn$ and $\beta
>0$, define $$\f_{p,\beta} := \argmin_{\ft \in
\ludp}\left(\norm{\f-\ft}^2+\pb\normup{\ft}\right).$$ Then, there exists
$C_{q}>0$ depending only on $q$ such that for all $\pp \in \ensn$ and
$\beta>0$,
$$\normup{f_{p,\beta}} \leq C_{q} R^{q} \blambdap^{1-q}$$ and
$$\norm{\f-\f_{p,\pb}} \leq  R (p+1)^{-r}+\sqrt{C_{q}}\, R^{q/2} \pb^{1-q/2}.$$
\end{lemma}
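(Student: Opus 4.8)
The goal is to bound both the $\ell_1$-norm $\normup{f_{p,\beta}}$ and the approximation error $\norm{\f - \f_{p,\beta}}$ for the deterministic Lasso solution over the truncated dictionary, when $\f$ lies in $\wlq(R) \cap \besov(R)$ and the dictionary is orthonormal. Since $\mathcal{D}$ is orthonormal, the minimization $\argmin_{\ft \in \ludp}(\norm{\f-\ft}^2 + \beta\normup{\ft})$ decouples coordinate-wise. Writing $\f = \sum_j \theta_j \vp_j$, the solution $f_{p,\beta}$ is obtained by soft-thresholding the first $p$ coefficients at level $\beta/2$ and discarding the coefficients beyond index $p$. This is exactly the computation already carried out in the fourth remark of Section~\ref{rateest}: $f_{p,\beta} = \sum_{j=1}^p \hat\theta_j \vp_j$ with $\hat\theta_j = \operatorname{sign}(\theta_j)(|\theta_j| - \beta/2)_+$. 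This reduction is what makes the lemma tractable, and it is the step I would establish first.

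**Plan for the $\ell_1$-norm bound.**

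With the soft-thresholded coefficients in hand, $\normup{f_{p,\beta}} = \sum_{j=1}^p (|\theta_j| - \beta/2)_+ \leq \sum_{j=1}^\infty (|\theta_j| - \beta/2)_+$. The plan is to bound this sum using only the weak-$\mathcal{L}_q$ assumption $\f \in \wlq(R)$, which controls the tail counting function $\eta \mapsto \#\{j : |\theta_j| > \eta\} \leq R^q \eta^{-q}$. A clean way is to write $(|\theta_j|-\beta/2)_+ = \int_{\beta/2}^{|\theta_j|} d\eta = \int_{\beta/2}^\infty \mathds{1}_{\{|\theta_j| > \eta\}}\, d\eta$ and interchange sum and integral, giving $\sum_j (|\theta_j|-\beta/2)_+ = \int_{\beta/2}^\infty \#\{j : |\theta_j| > \eta\}\, d\eta \leq \int_{\beta/2}^\infty R^q \eta^{-q}\, d\eta$. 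Since $q > 1$ this integral converges and evaluates to $R^q (\beta/2)^{1-q}/(q-1)$, which is of the claimed form $C_q R^q \beta^{1-q}$. This is routine once the soft-thresholding picture is in place.

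**Plan for the approximation-error bound and the main obstacle.**

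For the error, orthonormality gives $\norm{\f - f_{p,\beta}}^2 = \sum_{j > p} \theta_j^2 + \sum_{j=1}^p \min(|\theta_j|, \beta/2)^2$. The tail $\sum_{j>p}\theta_j^2$ is controlled directly by the Besov assumption $\f \in \besov(R)$, which via \eqref{defbesovr} yields $\sum_{j \geq p+1}\theta_j^2 \leq R^2 (p+1)^{-2r}$, contributing the term $R(p+1)^{-r}$ after taking square roots. The truncation term $\sum_{j=1}^p \min(|\theta_j|, \beta/2)^2$ must be bounded by the weak-$\mathcal{L}_q$ assumption; using $\min(|\theta_j|,\beta/2)^2 = \int_0^{(\beta/2)^2} \mathds{1}_{\{\theta_j^2 > s\}}\, ds$ together with the tail count (as above, now applied to $\sum_j \min(|\theta_j|,\beta/2)^2$) produces a bound of order $R^q (\beta/2)^{2-q}$, hence a contribution of order $R^{q/2}\beta^{1-q/2}$ to $\norm{\f-f_{p,\beta}}$. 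The one subtlety I anticipate being the main obstacle is the bookkeeping needed to keep the two contributions separate and to combine them via $\sqrt{a+b} \leq \sqrt a + \sqrt b$ so that the final bound appears as a \emph{sum} $R(p+1)^{-r} + \sqrt{C_q}\, R^{q/2}\beta^{1-q/2}$ rather than as a single aggregated term; in particular one must make sure the weak-$\mathcal{L}_q$ truncation estimate covers both the thresholded-but-retained coefficients and (harmlessly) the discarded tail, so that the constants depend only on $q$ as claimed.
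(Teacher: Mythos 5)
Your proposal is correct and follows essentially the same route as the paper's proof: you identify $f_{p,\beta}$ explicitly as coordinatewise soft-thresholding at level $\beta/2$, control the tail $\sum_{j>p}(\theta^*_j)^2$ by the Besov assumption, and bound both the thresholding error and the $\ell_1$-norm by layer-cake integrations against the weak-$\ell_q$ counting function, exactly as the paper does. The only difference is presentational: the paper packages the layer-cake computations into two small auxiliary lemmas and bounds the retained-coefficient error slightly more loosely through the quantity $\sum_{j\leq p}\vert\theta^*_j\vert\,\mathds{1}_{\{\vert\theta^*_j\vert>\beta/2\}}$, whereas you integrate $\min(\vert\theta^*_j\vert,\beta/2)^2$ and $(\vert\theta^*_j\vert-\beta/2)_+$ directly, which is marginally tighter but yields the same bounds.
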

\n The proof of Lemma \ref{gentmam} uses the two following technical lemmas.

\begin{lemma}\label{lemme1} For all $a=\left(a_{1},\dots,a_{p}\right)\in
\mathbb{R}^p$ and $\gamma > 0$,  $$\sum_{j=1}^{p} a_{j}^2\ \mathds{1}_{\{\vert
a_{j}\vert \leq \gamma\}} \leq 2 \sum_{j=1}^{p} \int_{0}^{\gamma} t\,
\mathds{1}_{\{\vert a_{j}\vert > t\}}\ dt.$$
\end{lemma}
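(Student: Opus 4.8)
The plan is to prove the inequality term by term, so the whole matter reduces to the single-coordinate claim that for any real number $a$ and any $\gamma>0$,
$$a^2\,\mathds{1}_{\{|a|\leq\gamma\}}\ \leq\ 2\int_0^{\gamma} t\,\mathds{1}_{\{|a|>t\}}\,dt;$$
summing this over $a=a_1,\dots,a_p$ then yields the lemma. The tool I would use is the elementary layer-cake identity $a^2=2\int_0^{|a|}t\,dt=2\int_0^{\infty}t\,\mathds{1}_{\{|a|>t\}}\,dt$, valid because $\mathds{1}_{\{|a|>t\}}$ equals $1$ precisely for $t\in[0,|a|)$.

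To deduce the single-coordinate claim I would split on the sign of $|a|-\gamma$. On the event $|a|\leq\gamma$ the integrand $t\,\mathds{1}_{\{|a|>t\}}$ already vanishes for $t>|a|$, so cutting the integral off at $\gamma$ changes nothing: $\int_0^{\gamma} t\,\mathds{1}_{\{|a|>t\}}\,dt=\int_0^{|a|}t\,dt=|a|^2/2$, whence the right-hand side equals $a^2$ and matches the left-hand side exactly. On the event $|a|>\gamma$ the left-hand side is $0$ while the right-hand side equals $2\int_0^{\gamma} t\,dt=\gamma^2\geq0$, so the inequality holds trivially. Equivalently, one can avoid cases by writing $a^2\,\mathds{1}_{\{|a|\leq\gamma\}}=\bigl(2\int_0^{\gamma} t\,\mathds{1}_{\{|a|>t\}}\,dt\bigr)\mathds{1}_{\{|a|\leq\gamma\}}$ and then bounding $\mathds{1}_{\{|a|\leq\gamma\}}\leq1$ together with the nonnegativity of the integral.

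There is essentially no obstacle here beyond careful bookkeeping: the only delicate point is the interplay between the truncation $\mathds{1}_{\{|a|\leq\gamma\}}$ on the left and the truncation of the integral at the upper limit $\gamma$ on the right. In fact the inequality is an equality on the set $\{|a|\leq\gamma\}$ and is slack only where $|a|>\gamma$, so the factor $2$ is sharp and no larger constant is needed.
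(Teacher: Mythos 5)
Your proof is correct and follows essentially the same route as the paper's: both split according to whether $\vert a_{j}\vert \leq \gamma$ or $\vert a_{j}\vert > \gamma$, evaluate the integral exactly in each case (giving $a_{j}^2/2$ and $\gamma^2/2$ respectively), and observe that the inequality is an equality on $\{\vert a_{j}\vert \leq \gamma\}$ with nonnegative slack $\gamma^2$ otherwise. Your per-coordinate formulation and the explicit layer-cake identity are merely a cosmetic reorganization of the paper's computation under the sum.
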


\begin{proof}
\begin{align*} &
2 \sum_{j=1}^{p} \int_{0}^{\gamma} t\, \mathds{1}_{\{\vert
a_{j}\vert
> t\}}\ dt\\  = {} &  2 \sum_{j=1}^{p}\left[ \left(\int_{0}^{\gamma}
t\, \mathds{1}_{\{\vert a_{j}\vert > t\}}\ dt\right)
\mathds{1}_{\{\vert a_{j}\vert
> \gamma\}} +  \left(\int_{0}^{\gamma} t\, \mathds{1}_{\{\vert
a_{j}\vert
> t\}}\  dt\right) \mathds{1}_{\{\vert a_{j}\vert \leq
\gamma\}}\right]\\  = {} & 2 \sum_{j=1}^{p}\left[
\left(\int_{0}^{\gamma} t\ dt\right) \mathds{1}_{\{\vert a_{j}\vert
> \gamma\}} +  \left(\int_{0}^{\vert a_{j}\vert} t\ dt\right)
\mathds{1}_{\{\vert a_{j}\vert \leq \gamma\}}\right]\\  = {} &
\sum_{j=1}^{p} \left(\gamma^2\, \mathds{1}_{\{\vert a_{j}\vert
> \gamma\}} +  a_{j}^2\,
\mathds{1}_{\{\vert a_{j}\vert \leq \gamma\}}\right)\\  \geq {} &
\sum_{j=1}^{p} a_{j}^2\, \mathds{1}_{\{\vert a_{j}\vert \leq
\gamma\}}.\end{align*}\end{proof}

\begin{lemma}\label{lemme2} For all $a=\left(a_{1},\dots,a_{p}\right)\in
\mathbb{R}^p$ and $\gamma > 0$,  $$\sum_{j=1}^{p} \vert a_{j}\vert
\mathds{1}_{\{\vert a_{j}\vert > \gamma\}} = \gamma \sum_{j=1}^{p}
\mathds{1}_{\{\vert a_{j}\vert > \gamma\}} + \sum_{j=1}^{p}
\int_{\gamma}^{+\infty} \mathds{1}_{\{\vert a_{j}\vert > t\}}\ dt.$$
\end{lemma}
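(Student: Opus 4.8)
The plan is to reduce the claimed identity to a single-index statement and then sum over $j$, since both sides of the asserted equality are sums of terms depending only on the individual $a_j$. First I would fix an index $j$ and compute the tail integral appearing on the right-hand side: for any real number $a_j$ and any $\gamma>0$, the integrand $\mathds{1}_{\{\vert a_j\vert > t\}}$ equals $1$ exactly for $\gamma \leq t < \vert a_j\vert$ and vanishes otherwise, so that
$$\int_{\gamma}^{+\infty} \mathds{1}_{\{\vert a_j\vert > t\}}\ dt = \left(\vert a_j\vert - \gamma\right) \mathds{1}_{\{\vert a_j\vert > \gamma\}}.$$
This is really the only computation in the proof, and it amounts to a one-dimensional layer-cake (Fubini) decomposition of $\vert a_j\vert$ truncated from below at the level $\gamma$.

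Next I would establish the termwise identity
$$\vert a_j\vert\, \mathds{1}_{\{\vert a_j\vert > \gamma\}} = \gamma\, \mathds{1}_{\{\vert a_j\vert > \gamma\}} + \int_{\gamma}^{+\infty} \mathds{1}_{\{\vert a_j\vert > t\}}\ dt$$
by distinguishing two cases. When $\vert a_j\vert > \gamma$, the indicator equals $1$, the displayed integral equals $\vert a_j\vert - \gamma$ by the previous step, and the right-hand side collapses to $\gamma + (\vert a_j\vert - \gamma) = \vert a_j\vert$, which matches the left-hand side. When $\vert a_j\vert \leq \gamma$, all three indicator/integral expressions vanish simultaneously, and the identity reads $0 = 0$. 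Summing this identity over $j = 1,\dots,p$ and using linearity of both the finite sum and the integral then yields exactly the statement of the lemma.

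I do not expect any genuine obstacle here, the result being elementary; the proof is in the same spirit as, and complements, Lemma~\ref{lemme1}. The only point requiring a little care is the boundary behaviour at $t = \vert a_j\vert$ and at $\vert a_j\vert = \gamma$, where one must check that using the strict inequalities $\{\vert a_j\vert > t\}$ and $\{\vert a_j\vert > \gamma\}$ is harmless. This is indeed the case: the integrand differs from its closed-interval counterpart only on a set of Lebesgue measure zero, so the value of the integral is unaffected, and both sides of the termwise identity vanish when $\vert a_j\vert = \gamma$, so the choice of strict versus non-strict inequality is immaterial and the identity holds for every real $a_j$.
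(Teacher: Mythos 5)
Your proof is correct and follows essentially the same route as the paper's: the paper likewise evaluates $\int_{\gamma}^{+\infty} \mathds{1}_{\{\vert a_{j}\vert > t\}}\, dt$ termwise as $\left(\vert a_{j}\vert-\gamma\right)\mathds{1}_{\{\vert a_{j}\vert > \gamma\}}$ and rearranges. Your extra case analysis and the remark on boundary behaviour are harmless elaborations of the same computation.
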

\begin{proof}
$$
\sum_{j=1}^{p} \int_{\gamma}^{+\infty} \mathds{1}_{\{\vert
a_{j}\vert > t\}}\ dt  =  \sum_{j=1}^{p}
\left(\int_{\gamma}^{\vert a_{j} \vert} dt\right)
\mathds{1}_{\{\vert a_{j}\vert > \gamma\}}  =
\sum_{j=1}^{p}\left(\vert a_{j}\vert-\gamma\right)
\mathds{1}_{\{\vert a_{j}\vert > \gamma\}}.$$ \end{proof}

\paragraph{Proof of Lemma \ref{gentmam}.}\label{proofgentmam} Let denote by
$\{\theta^{*}_{j}\}_{j\in\mathbb{N}^*}$ the coefficients of the target function
$\f$ in the basis $\mathcal{D}=\{\vp_{j}\}_{j\in\mathbb{N}^*}$, $\f =
\theta^{*}.\vp = \sum_{j\in \mathbb{N}^*} \cf_{j}\, \vp_{j}$. We introduce for
all $p\in \ensn$,
$$\Theta_{p} := \left\{\theta =(\theta_{j})_{j\in \mathbb{N}^*},\ \theta =
 \left(\theta_{1},\dots,\theta_{\pp},0,\dots,0,\dots\right)\right\}.$$
Let $\blambdap>0$. Since $f_{p,\beta} \in \ludp$, there exists a unique
$\theta^{p,\beta} \in \Theta_{p}$ such that $f_{p,\beta} =
\theta^{p,\beta}.\vp$. Moreover, from \eqref{defnormemodelpu} and using the
orthonormality of the basis functions~$\vp_{j}$, we have
\begin{equation}\label{douze}\theta^{p,\beta} =  \argmin_{\theta \in
\Theta_{p}}\left(\norm{\theta^*.\vp-\theta.\vp}^2+\blambdap\norm{\theta}_{1}\right) =
\argmin_{\theta \in \Theta_{p}}\left(\norm{\theta^*-\theta}^2+\blambdap\norm{\theta}_{1}\right).
 \end{equation} By
calculating the subdifferential of the function $\theta \in \mathbb{R}^{\pp}
\mapsto \norm{\theta^*-\theta}^2+\blambdap\norm{\theta}_{1}$, we get that the
solution of the convex minimization problem \eqref{douze} is $\thetalp =
(\thetalp_{1},\dots,\thetalp_{\pp},0,\dots,0,\dots)$ where for all
$j=1,\dots,\pp$,
$$\thetalp_{j}\ = \
 \left\{\begin{array}{ll}
\cf_{j} - \blambdap/2 & \quad \text{if } \cf_{j} > \blambdap/2,\\
\cf_{j} + \blambdap/2 & \quad \text{if } \cf_{j} < -\blambdap/2,\\
0 & \quad \text{else} .\end{array}\right.$$ Then, we have
\begin{align}\label{fleurr}\non
\norm{\f-\f_{p,\beta}}^{2} & =  \norm{\theta^*-\thetalp}^2\\ \non & =  \sum_{j=1}^{\infty}
\left(\cf_{j}-\thetalp_{j}\right)^2 \\ \non & =
\sum_{j=\pp+1}^{\infty} {\cf_{j}}^2 +  \sum_{j=1}^{\pp}
{\cf_{j}}^2 \mathds{1}_{\{\vert \cf_{j} \vert \leq \blambdap/2\}} +
 \sum_{j=1}^{\pp} \frac{\blambdap^2}{4}\, \mathds{1}_{\{\vert \cf_{j}
\vert > \blambdap/2\}}\\ &  \leq  \underbrace{\sum_{j=\pp+1}^{\infty} {\cf_{j}}^2}_{(i)} +
\underbrace{\sum_{j=1}^{\pp} {\cf_{j}}^2 \mathds{1}_{\{\vert
\cf_{j} \vert \leq \blambdap/2\}}}_{(ii)} +  \frac{\blambdap}{2} \underbrace{
\sum_{j=1}^{\pp} \vert \cf_{j} \vert \mathds{1}_{\{\vert \cf_{j}
\vert>\blambdap/2\}}}_{(iii)}\ \text{ .}
\end{align} while
\begin{align}\label{fleurv} \non
\normup{f_{p,\beta}} & =   \sum_{j=1}^{\infty} \vert\thetalp_{j}\vert\\ \non &  =
\sum_{j=1}^{\pp}
\left(\vert \cf_{j} \vert - \frac{\blambdap}{2} \right)
\mathds{1}_{\{\vert \cf_{j} \vert
>\blambdap/2\}}\\  &  \leq
\sum_{j=1}^{\pp} \vert \cf_{j} \vert \mathds{1}_{\{\vert \cf_{j}
\vert>\blambdap/2\}}  =  (iii) \text{ .}
\end{align}
Now, since $\f$ is assumed to belong to $\besov(R)$, we get from
\eqref{defbesovr} that $(i)$ is bounded by
\begin{equation}\label{supm} \sum_{j=\pp+1}^{\infty} {\cf_{j}}^2  \leq
R^2 (\pp+1)^{-2r}.\end{equation} Let us now bound $(ii)$ and $(iii)$ thanks to
the assumption $\f \in \wlq(R)$. By applying Lemma~\ref{lemme1}   and Lemma
\ref{lemme2}  with $a_{j} = \cf_{j}$ for all $j=1,\dots,p$ and $\gamma =
\blambdap/2$, and by using the fact that $\sum_{j=1}^{p} \mathds{1}_{\{\vert
\cf_{j}\vert > t\}} \leq \sum_{j=1}^{\infty} \mathds{1}_{\{\vert \cf_{j}\vert >
t\}} \leq R^q t^{-q}$ for all $t
>0$ if $\f \in \wlq(R)$,  we get that $(ii)$ is bounded by
\begin{align}\label{eq11}\non \sum_{j=1}^{p} {\cf_{j}}^2
\mathds{1}_{\{\vert\cf_{j}\vert \leq \blambdap/2\}}  &\leq  2
\sum_{j=1}^{p} \int_{0}^{\blambdap/2} t \mathds{1}_{\{\vert
\cf_{j}\vert
> t\}}\ dt\\ \non & \leq  2 R^q \int_{0}^{\blambdap/2} t^{1-q}\ dt\\ & =
\frac{2^{q-1}}{2-q}\ R^{q} \blambdap^{2-q},\end{align} while $(iii)$ is bounded by
\begin{align}\label{eq12}\non  \sum_{j=1}^{p} \vert\cf_{j}\vert \mathds{1}_{\{\vert\cf_{j}\vert
> \blambdap/2\}} & =  \frac{\blambdap}{2} \sum_{j=1}^{p}
\mathds{1}_{\{\vert \cf_{j}\vert > \blambdap/2\}} + \sum_{j=1}^{p}
\int_{\blambdap/2}^{+\infty} \mathds{1}_{\{\vert \cf_{j}\vert > t\}}\
dt\\ \non &  \leq
R^q \left(\frac{\blambdap}{2}\right)^{1-q} +
R^q \int_{\blambdap/2}^{+\infty}
 t^{-q}\ dt \\ &  =   \frac{q\, 2^{q-1}}{q-1}\, R^{q} \blambdap^{1-q}.\end{align}
Gathering together \eqref{fleurv} and \eqref{eq12} on the one hand and
\eqref{fleurr}, \eqref{supm}, \eqref{eq11} and \eqref{eq12} on the other hand,
we get that there exists $C_{q}>0$ depending only on $q$ such that
$$\normup{f_{p,\beta}} \leq C_{q} R^{q} \blambdap^{1-q}$$
and $$\norm{\f-f_{p,\beta}}^{2} \leq R^2 (\pp+1)^{-2r} + C_{q} R^q
\blambdap^{2-q}.$$ Finally,
$$\norm{\f-\f_{p,\pb}} \leq \sqrt{R^2
(\pp+1)^{-2r} + C_{q} R^q \blambdap^{2-q}} \leq  R
(p+1)^{-r}+\sqrt{C_{q}}\, R^{q/2} \pb^{1-q/2}.$$ \hfill $\Box$

\paragraph{Proof of Proposition \ref{minimax}.}\label{preuveminimax} Let us define
$$\R = \eps \sqrt{u\ln\left(\rb\, \eps^{-1}\right)}, \quad p = 2^{J},\quad d =
2^{K},$$ with
$$J = \left\lfloor \frac{2-q}{2r}\ \log_{2}
\left(\rw\R^{-1}\right)\right\rfloor$$ and
$$K = \left\lfloor q \log_{2} \left(\rw \R^{-1}\right)\right\rfloor.$$
Let us first  check that $\R$ is well-defined and that $d\leq p$ under the
assumptions of Proposition \ref{minimax}. Under the assumption $r<1/q-1/2$, we
have $u>0$, and since $R\eps^{-1} \geq \text{\upshape{e}}^2 \geq
\text{\upshape{e}}$, $\R$ is well-defined. Moreover, since $r<1/q-1/2$, we have
$(2-q)/(2r) > q$, so it only remains to check that $R\R^{-1} \geq
\text{\upshape{e}}$ so as to prove that $d \leq p$. We shall in fact prove the
following stronger result:

\medskip
\n Result $(\diamondsuit)$: If $R\eps^{-1} \geq\max(\text{\upshape{e}}^2,u^2)$,
then $R\eps^{-1}/\left(\ln(R\eps^{-1})\right) \geq u$.

\medskip \n This result indeed implies that, under the assumption $R\eps^{-1} \geq\max(\text{\upshape{e}}^2,u^2)$,
$$R\R^{-1} = \frac{R\eps^{-1}}{\sqrt{u \ln \left(R\eps^{-1}\right)}} =
\sqrt{R\eps^{-1}}\,
\sqrt{\frac{R\eps^{-1}}{u \ln \left(R\eps^{-1}\right)}}
 \geq \text{\upshape{e}} \times 1 \geq  \text{\upshape{e}}.$$

\n Let us prove Result $(\diamondsuit)$. Introduce the function $$g:\
]0,+\infty[ \mapsto \mathbb{R},\ x \mapsto \frac{x}{\ln x}.$$ It is easy to
check that $g$ is non-decreasing on $[\text{\upshape{e}},+\infty[$ and that
$g(x^2)\geq x$ for all $x>0$. Now, assume that $R\eps^{-1}\geq
\max(\text{\upshape{e}}^2,u^2)$. Using the properties of $g$, we deduce that if
$u\geq \text{\upshape{e}}$ then $R\eps^{-1}\geq u^2 \geq \text{\upshape{e}}^2
\geq \text{\upshape{e}}$ and
$$\frac{R\eps^{-1}}{\ln(R\eps^{-1})} = g(R\eps^{-1}) \geq g(u^2) \geq u,$$
while if $u < \text{\upshape{e}}$ then  $R\eps^{-1}\geq \text{\upshape{e}}^2
\geq \text{\upshape{e}}$ and
$$\frac{R\eps^{-1}}{\ln(R\eps^{-1})} = g(R\eps^{-1}) \geq g(\text{\upshape{e}}^2)
\geq \text{\upshape{e}} > u,$$ hence Result $(\diamondsuit)$.

\medskip \n Now, consider the following hypercube $\Theta(p,d,\R)$ defined by
\begin{align*} & \left\{\sum_{j=1}^{\infty}\theta_{j}\,
\vp_{j},\ \left(\theta_{1},\dots,\theta_{p}\right) \in [0,\R]^p,\ \theta_{j} = 0
\text{ for } j \geq p+1,\ \sum_{j=1}^{p} \mathds{1}_{\{\theta_{j}\neq0\}} = d\right\}\\
 = {}&  \left\{\R \sum_{j=1}^{\infty}\beta_{j}\, \vp_{j},\ \left(\beta_{1},\dots,\beta_{p}\right)
\in [0,1]^p,\ \beta_{j} = 0 \text{ for } j \geq p+1,\ \sum_{j=1}^{p}
\mathds{1}_{\{\beta_{j}\neq0\}} = d\right\}.\end{align*} The essence of the
proof is just to ckeck that $\Theta(p,d,\R) \subset \swlq(\rw) \cap
\besov(\rb)$, which shall enable us to bound from below the minimax risk over
$\swlq(\rw) \cap \besov(\rb)$ by the lower bound of the minimax risk over
$\Theta(p,d,\R)$ provided in~\cite{BM99g}.

\medskip \n Let $h \in \Theta(p,d,\R)$. We write $h  = \sum_{j=1}^{\infty} \theta_{j}
\vp_{j} = \R \sum_{j=1}^{\infty} \beta_{j} \vp_{j}$. $$\sum_{j=1}^{\infty}
\vert\theta_{j}\vert^q  =  \R^{q} \sum_{j=1}^{p} \beta_{j}^q\,
\mathds{1}_{\{\beta_{j}\neq0\}}\leq  \R^{q}  \sum_{j=1}^{p}
\mathds{1}_{\{\beta_{j}\neq0\}} \leq
  \R^{q} d \leq  \R^{q} \left(\rw \R^{-1}\right)^q \leq  \rw^{q}.$$ Thus, $h \in \swlq(\rw)$.

\medskip \n Let $J_{0} \in \mathbb{N}^{*}$. If $J_{0}>p$, then,
$$ J_{0}^{2r} \sum_{j=J_{0}}^{\infty} \theta_{j}^2  \leq  J_{0}^{2r}
\sum_{j=p+1}^{\infty} \theta_{j}^2 = 0 \leq \rb^2.$$ Now consider $J_{0} \leq p$.
 Then, \begin{align*} J_{0}^{2r} \sum_{j=J_{0}}^{\infty} \theta_{j}^2
 &  =   J_{0}^{2r} \R^2 \sum_{j=J_{0}}^{p}
\beta_{j}^2 \mathds{1}_{\{\beta_{j}\neq0\}}\\   &  \leq   J_{0}^{2r} \R^2
\sum_{j=J_{0}}^{p} \mathds{1}_{\{\beta_{j}\neq0\}}\\ &  \leq   p^{2r} \R^2 d \\
& \leq  \left(\rb \R^{-1}\right)^{2-q} \R^2 \left(\rw \R^{-1}\right)^q\\
& \leq  \rb^2.
\end{align*} Thus, $h \in \besov(\rb)$. Therefore, $\Theta(p,d,\R) \subset \swlq(\rw) \cap \besov(\rb)$
and  \begin{equation}\label{jard}\inf_{\fct} \sup_{\f \in \swlq(\rw) \cap
\besov(\rb)} \mathbb{E}\left[\norm{\f-\fct}^2\right]  \geq  \inf_{\fct}
\sup_{\f \in \Theta\left(p,d,\R\right)}
\mathbb{E}\left[\norm{\f-\fct}^2\right].\end{equation} Now, from Theorem 5 in
\cite{BM99g}, we know that the minimax risk over $\Theta\left(p,d,\R\right)$
satisfies
\begin{align}\label{pacon}\non \inf_{\fct} \sup_{\f \in
\Theta\left(p,d,\R\right)} \mathbb{E}\left[\norm{\f-\fct}^2\right]& \geq
\kappa d \min\left(\R^2, \eps^2\left(1+\ln\left(\frac{p}{d}\right)\right)\right)\\ \non
 & \geq  \kappa \frac{\left(\rw \R^{-1}\right)^q}{2}
 \min\left(\R^2, \eps^2\left(1+\ln\left(\frac{p}{d}\right)\right)\right)\\  & \geq
\kappa' \rw^{q} \R^{-q}
 \min\left(\R^2, \eps^2\left(1+\ln\left(\frac{p}{d}\right)\right)\right),\end{align} where $\kappa>0$
 and $\kappa'>0$ are absolute constants.  Moreover, we have
\begin{align}\label{accli}\non \eps^2\left(1+\ln\left(\frac{p}{d}\right)\right) & \geq
\eps^{2} \left(1+\ln\left[\frac{\left(\rb \R^{-1}\right)^{\frac{2-q}{2r}}}
{2\left(\rw \R^{-1}\right)^q}\right]\right)\\ \non & =   \eps^{2}
\left(1+\ln\left[\left(\rb \R^{-1}\right)^{u}\right]-\ln2\right)\\ \non &
\geq   \eps^{2} \ln\left[\left(\rb \R^{-1}\right)^{u}\right]\\ \non& =
\eps^{2} \ln\left[\left(\rb \eps^{-1}\right)^{u} \left(\eps
\R^{-1}\right)^{u}\right]\\ \non & =  \R^2 + \eps^{2} \ln\left[\left(\eps
\R^{-1}\right)^{u}\right]\\  & =  \R^2 - \frac{u}{2}\, \eps^{2}
\ln\left[u\ln\left(R \eps^{-1}\right)\right].\end{align} But the assumption
$R\eps^{-1}\geq \max(\text{\upshape{e}}^2,u^2)$ implies that \eqref{accli} is
greater than $\R^{2}/2$. Indeed, first notice that
\begin{equation}\label{fetm}  \R^2 - \frac{u}{2}\, \eps^{2}
\ln\left[u\ln\left(R \eps^{-1}\right)\right] \geq \R^2/2\ \Leftrightarrow\
\frac{R\eps^{-1}}{\ln(R\eps^{-1})} \geq u,\end{equation} and then apply Result $(\diamondsuit)$ above.
Thus, we deduce from \eqref{jard}, \eqref{pacon}, \eqref{accli} and
\eqref{fetm}  that there exists $\kappa''>0$ such that
$$ \inf_{\fct} \sup_{\f \in \swlq(\rw) \cap \besov(\rb)}
\mathbb{E}\left[\norm{\f-\fct}^2\right] \geq \kappa'' \rw^{q} \R^{2-q} =
\kappa'' u^{1-\frac{q}{2}} \rw^{q} \left(\eps\sqrt{\ln(R\eps^{-1})}\right)^{2-q}.$$ \hfill $\Box$

\newpage

\end{document}